\newtheorem{lemma}{Lemma}[section]
\newtheorem{proposition}{Proposition}[section]
\newtheorem{corollary}{Corollary}[section]
\newtheorem{theorem}{Theorem}[section]
\theoremstyle{definition}
\newtheorem{definition}{Definition}[section]
\theoremstyle{remark}
\theoremstyle{remark}
\newtheorem{remark}{Remark}[section]
\numberwithin{equation}{section}
\newcommand{\N}{{\mathbb N}}
\newcommand{\R}{{\mathbb R}}
\definecolor{blu}{rgb}{0,0,1}
\title[Ground state solutions of $(2,q)$-Laplacian Schr\"odinger equations]{Ground state solutions of a class of $(2,q)$-Laplacian Schr\"odinger equations with inhomogeneous nonlinearity}
\author[Ying Huang]{Ying Huang}
\address{Ying Huang
\newline\indent
School of Mathematics and Information Sciences, 
\newline\indent
Guangzhou University, Guangzhou, China}
\author[Tingjian Luo]{Tingjian Luo}
\address{Tingjian Luo
\newline\indent
School of Mathematics and Information Sciences, 
\newline\indent
Guangzhou University, Guangzhou, China}
\email{luotj@gzhu.edu.cn}
\author[Youde Wang]{Youde Wang}
\address{Youde Wang
\newline\indent
School of Mathematics and Information Sciences, 
\newline\indent
Guangzhou University, Guangzhou, China;
\newline\indent
Institute of Mathematics, Academy of Mathematics and  Systems Science, Chinese Academy of Sciences, Beijing,China;
\newline\indent
School of Mathematical Sciences, University of Chinese Academy of Sciences, Beijing, China}
\email{wyd@math.ac.cn}
\begin{document}
\subjclass[2000]{35J50, 35Q41, 35Q55, 37K45}

\keywords{Ground state solutions, $(2,q)$-Laplacian Schr\"odinger equations, Existence and multiplicity.}

\begin{abstract} In this paper, we systematically investigate the ground state solutions of a class of $(2,q)$-Laplacian Schr\"odinger equations with inhomogeneous nonlinearity. By analyzing global and local constrained variational problems, we establish the existence, non-existence, and asymptotic behavior of ground states, addressing the mass-subcritical, mass-critical, and mass-supercritical regimes. As a byproduct, we prove a multiplicity of bound states with prescribed mass. Some of our existence results are sharp. The proofs are based primarily on constrained variational techniques.
\end{abstract}
\maketitle

\section{Introduction}
In the paper,we are concerned with the following $(2,q)$-Laplacian Schr\"odinger equations with inhomogeneous nonlinearity:
 \begin{equation}\label{eq1.1}
 -\triangle u -\triangle_q u =\lambda u + \frac{|u|^{p-2}u}{|x|^b}, \quad  x\in \mathbb{R}^N,
\end{equation}
where $\triangle_q u = div (|\nabla u|^{q-2} \nabla u )$, with $q>2$, $N\geq 1, 0<b<\min\{2,N\}$ and $2<p<\frac{q(N-b)}{(N-q)^+}$. Here, $\frac{q(N-b)}{(N-q)^+}:=\frac{q(N-b)}{N-q}$ if $q<N$, and $\frac{q(N-b)}{(N-q)^+}:=+\infty$ if $q\geq N$. Such stationary equations stems from the study of the general reaction-diffusion system:
\begin{equation}\label{eq1.2}
\partial_t\phi (t,x)-\Delta_r \phi(t,x)-\Delta_q \phi(t,x)=f(x, \phi(t,x)), \quad (t,x)\in \R^+\times \R^N,
\end{equation}
which has been widely applied on the chemical reaction, plasma physics and biophysics. In those models, the wave function $\phi(t,x)$ describes a concentration; the $(r,q)$-Laplacian operator, defined as $-\Delta_r \phi - \Delta_q \phi$, arises in the physical systems where nonstandard diffusion processes occur. Such equations model phenomena with double-phase heterogeneity, where two distinct diffusion mechanisms (with exponents $r$ and $q$) compete or coexist \cite{A1,CR}. For instance, in quantum physics, they describe Bose-Einstein condensates with spatially dependent interactions, where $r$ and $q$ reflect different regimes of particle dispersion \cite{D}. In nonlinear optics, $(r,q)$-Laplacian equations govern electromagnetic wave propagation in composite materials with anisotropic refractive indices. The inhomogeneous nonlinearity $f(x,\phi)$ further accounts for spatially localized potentials or nonuniform media \cite{A,F}. From a variational viewpoint, the interplay between $r$- and $q$-growth conditions leads to rich mathematical structures, including bifurcations of ground states and multiplicity of solutions—key for understanding phase transitions in such systems. The $(r,q)$-framework thus bridges mathematical analysis with multiscale physical modeling, where nonhomogeneous diffusion is essential.

Considering the stationary solutions of Eq. \eqref{eq1.2}, leads to the following time-independent equation:
\begin{equation}\label{eq1.3}
-\Delta_r u(x)-\Delta_q u(x)=f(x, u), \quad x\in \R^N.
\end{equation}
In \cite{CI}, Cherfils and Il’yasov considered the existence and non-existence of solutions of Eq. \eqref{eq1.3} on bounded domain, which exposit some essential differences between the $(r,q)$-laplacians equation and single Laplacian equation. Since then, using mainly the variational method, Eq. \eqref{eq1.3} has been intensively studied. See for example, \cite{BBF1} for the existence of infinitely many weak solutions with negative energy, and \cite{BBF2} for infinitely many solutions with positive energy. See also the early work on the existence and multiplicity of week solutions under general assumption on the nonlinearity in \cite{HL1,HL2}, and also \cite{AR, GBZA}. The reference concerned with the study of $(p,q)$-Laplacian equations is huge and rich, here we refer the readers to the recent work \cite{BBF1,PRR3,ZZR} and the reference therein.

Let $r=2$, then Eq. \eqref{eq1.3} is reduced to a $(2,q)$-Laplacian equation. In bounded domain, with a Dirichlet boundary condition, Benci et. al. \cite{BMV} initially considered its eigenvalue problem. Later,  Papageorgiou et. al. established a serious of work on the existence and multiplicity of solutions with precise sign information, see \cite{PRR1,PRR2,PRR3}.

In this paper, we aim to investigate the solutions of Eq. \eqref{eq1.1} with a prescribed $L^2$-norm. Specifically, for a given mass $c>0$, we find a pair $(\lambda_c, u_c)$ that serves as a solution to Eq. \eqref{eq1.1}. Such solutions are of particular significance in certain physical applications. This is because the quantity represents the mass, which remains conserved throughout the time evolution described by Eq. \eqref{eq1.2}. In the existing literature, solutions with a prescribed $L^2$-norm are often referred as ``\emph{normalized solutions}". Over the past four decades, normalized solutions have garnered an increasingly growing amount of attention. By the variational method, when aiming to find a normalized solution of Eq. \eqref{eq1.1} for a given $c>0$, we consider the constrained critical points of the following functional:
\begin{equation}\label{func1}
 I(u)=\frac{1}{2}\|\nabla u\|_2^2 + \frac{1}{q}\|\nabla u\|_q^q - \frac{1}{p} \int \frac{|u|^p}{|x|^b}dx,
\end{equation}
on the $L^2$-constraint:
\begin{equation*}
 S(c):=\{u \in X: \int |u|^2 dx = c\},
\end{equation*}
where $X:=H^1(\R^N) \cap D^{1,q}(\mathbb{R}^N)$. In this case, the parameter $\lambda\in \R$ appears as the corresponding Lagrange Multiplier, which is a part of unknown. Here we fucus on the ground state solutions with prescribed $L^2$-norm, which is in the following sense:
\begin{definition}\label{def1.1}
We say that $u_c\in S(c)$ is \textbf{a ground state solution} to Eq. \eqref{eq1.1}, if it is a solution having minimal energy
among all the solutions which belong to $S(c)$. Namely,
$$I(u_c)=\inf\{I(u)\ | \ u\in S(c), (I|_{S(c)})'(u)=0\}.$$
\end{definition}

When $b=0$, the nonlinearity is homogeneous. The reference \cite{BY} seems to be the first trying on the study of $L^2$-normalized solutions for the $(2,q)$-Laplacian equation with homogeneous nonlinearity. Precisely, the authors use the concentration-compactness principle and the constrained variational method to prove the existence, non-existence, and multiplicity of normalized solutions of Eq. \eqref{eq1.1} with $b=0$. We also note a recent work \cite{DJP} for the study of $L^2$-normalized solutions for the $(2,q)$-Laplacian equation with logarithmic nonlinearity $f(x, u)=u \log |u|^2$. In \cite{DJP} , the authors established the existence and multiplicity of $L^2$-normalized solutions. 

When $b\neq 0$, the corresponding nonlinearity is referred as inhomogeneous one, which having more meanings in application, for example, in nonlinear optics, the inhomogeneous nonlinearity appears in some models as the corrections to the nonlinear power-law response \cite{A}. Also, in some case, the inhomogeneous nonlinearity causes new phenomena compared with the homogeneous case, see \cite{SBAC}, the inhomogeneous nonlinearity may induce to the spontaneous symmetry breaking of solutions. In view of the importance of the inhomogeneous case, our main motivation of this paper is to establish systematically the existence and multiplicity of the $L^2$-normalized solutions of Eq. \eqref{eq1.1}, under different assumptions with respect to the parameters in the equations. In analysis, as we shall see, it is not a analogous with the case $b=0$. In fact, we develop some new elements which may be used to improve some results in \cite{BY}. Also some of our existence results are described sharply.

Before giving our main results, let us recall the study of the following $L^q$-normalized problem (where $q>2$):
\begin{equation}\label{eq1.1-q}
\left\{
\begin{array}{l}
-\triangle u -\triangle_q u =\lambda |u|^{q-2}u + \frac{|u|^{p-2}u}{|x|^b}, \quad  x\in \mathbb{R}^N,\\
\int |u|^qdx=c,\ c>0.\\
\end{array}
\right.
\end{equation}
The first study of such type of problems should refer to \cite{GZZ}, where the authors proved the existence and asymptotic behavior of $L^q$-normalized solutions for a class of purely $p$-Laplacian equations with the homogeneous nonlinearity of mass-critical growth. Also recently, Cai and R\u{a}dulescu proved in \cite{CR} the existence of $L^q$-normalized solutions under more general assumption on the homogeneous nonlinearity. More studies on the $L^q$-normalized solutions for the equations involving the $p$-Laplcian can be referred to the reference given in \cite{CR}, see also a very recent work \cite{ZZL} for the case with power nonlinearity. We remark that, $\lambda |u|^{q-2}u$ as $q=2$ in Eq. \eqref{eq1.1-q} (namely the case in Eq. \eqref{eq1.1}) is called in the reference the chemical potential, which has more important applications, see e.g. \cite{C}. Also, the analysis on the compactness of the $L^2$-normalized problem is more complicated than the $L^q$-normalized one. In other words, using the same ideas of the proofs, one could extend the results in this paper without further essential difficulties for Eq. \eqref{eq1.1-q}.\\   

When it comes to apply the variational method, as an initial and fundamental step, we are required to establish the inhomogeneous $L^{q}$-Gagliardo-Nirenberg inequality with sharp constant, which is crucial to establish some sharp existence results in this paper.

\begin{theorem}\label{I-th-GN}
(Sharp inhomogeneous $L^{q}$-Gagliardo-Nirenberg inequality) Assume that $2<p<\frac{q(N-b)}{(N-q)^+}$, with $0 < b < \min\{2,N\}$, $N\geq1$, $q\geq 2 $, then  there exists a sharp constant $\mathcal{K}_{N,p,q}:=\frac{p}{\|Q_{p,q}\|_2^{p-2}}>0$, such that
\begin{equation}\label{I-p-GN}
  \int \frac{|u|^{p}}{|x|^{b}} dx \leq \mathcal{K}_{N,p,q} \|\nabla u\|_q^{\sigma_{p,q}}\|u\|_2^{p-\sigma_{p,q}}, \quad \forall u \in D^{1,q}(\R^N)\cap L^{2}(\mathbb{R}^{N}),
\end{equation}
where $\sigma_{p,q}:=\frac{q[N(p-2)+2b]}{N(q-2)+2q}$, and $Q_{p,q}$ is a fixed ground state solution of
\begin{equation}\label{I-eq2.1}
\sigma_{p,q} \triangle_q Q + (p-\sigma_{p,q}) Q - |x|^{-b}|Q|^{p-2}Q=0.
\end{equation}
Moreover, the equality in \eqref{I-p-GN} holds if $u=Q_{p,q}$. In particular, let $q=2$, then \eqref{I-p-GN} is reduced to 
\begin{equation}\label{I-p-GN2}
  \int \frac{|u|^{p}}{|x|^{b}} dx \leq \mathcal{K}_{N,p,2} \|\nabla u\|_2^{\frac{N(p-2)+2b}{2}}\|u\|_2^{\frac{2N+2b-(N-2)p}{2}}, \quad \forall u \in H^1(\R^N).
\end{equation}
\end{theorem}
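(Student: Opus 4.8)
The plan is to realise $1/\mathcal{K}_{N,p,q}$ as the infimum of the scaling--invariant quotient
\[
J(u):=\frac{\|\nabla u\|_q^{\sigma_{p,q}}\,\|u\|_2^{\,p-\sigma_{p,q}}}{\int |x|^{-b}|u|^{p}\,dx},\qquad u\in \big(D^{1,q}(\R^N)\cap L^{2}(\R^N)\big)\setminus\{0\},
\]
to show this infimum is attained, and to identify a minimiser (up to rescaling) with a ground state $Q_{p,q}$ of \eqref{I-eq2.1}. The exponent $\sigma_{p,q}$ is \emph{forced} by homogeneity: under $u\mapsto\mu\,u(\lambda\,\cdot)$ the quantities $\int|x|^{-b}|u|^{p}$, $\|\nabla u\|_q^{q}$ and $\|u\|_2^{2}$ pick up the factors $\mu^{p}\lambda^{b-N}$, $\mu^{q}\lambda^{q-N}$ and $\mu^{2}\lambda^{-N}$, and $J$ is invariant precisely when $\sigma_{p,q}=\frac{q[N(p-2)+2b]}{N(q-2)+2q}$; in the range $2<p<\frac{q(N-b)}{(N-q)^{+}}$ one checks $0<\sigma_{p,q}<p$. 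That $\inf J>0$, i.e. that \eqref{I-p-GN} holds with \emph{some} finite constant, follows from the Caffarelli--Kohn--Nirenberg inequality; alternatively, split $\int|x|^{-b}|u|^{p}$ over $\{|x|<1\}$ and $\{|x|\ge1\}$ and combine H\"older with the unweighted Sobolev/Gagliardo--Nirenberg embeddings of $D^{1,q}\cap L^{2}$, which apply because $p<q^{*}_{b}:=\frac{q(N-b)}{N-q}$.

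The crux is attainment, and this is where I expect the real work. First, by Schwarz symmetrisation the quotient does not decrease when $u$ is replaced by its symmetric decreasing rearrangement $u^{*}$: the P\'olya--Szeg\H{o} inequality gives $\|\nabla u^{*}\|_q\le\|\nabla u\|_q$ and $\|u^{*}\|_2=\|u\|_2$, while the Hardy--Littlewood rearrangement inequality, together with the fact that $|x|^{-b}$ is itself radial and radially decreasing, gives $\int|x|^{-b}(u^{*})^{p}\ge\int|x|^{-b}|u|^{p}$. So one may take a minimising sequence $\{u_n\}$ of non-negative, radially symmetric, non-increasing functions and, using the scaling group, normalise it so that $\|\nabla u_n\|_q=\|u_n\|_2=1$; then $\int|x|^{-b}|u_n|^{p}\to 1/\inf J>0$. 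Since $\{u_n\}$ is bounded in the reflexive space $D^{1,q}(\R^N)\cap L^{2}(\R^N)$, a subsequence satisfies $u_n\rightharpoonup U$. The key lemma is that for \emph{radial, monotone} functions bounded in $D^{1,q}\cap L^{2}$ the embedding into $L^{p}(\R^N,|x|^{-b}\,dx)$ is compact throughout $2<p<q^{*}_{b}$. Indeed, the outer tail $\int_{|x|>R}|x|^{-b}|u_n|^{p}$ is controlled uniformly by the radial $L^{2}$-decay $|u_n(r)|\lesssim r^{-N/2}$ and tends to $0$ as $R\to\infty$; the inner part $\int_{|x|<\delta}|x|^{-b}|u_n|^{p}$ is controlled by the radial $D^{1,q}$-decay $|u_n(r)|\lesssim r^{(q-N)/q}$ and tends to $0$ as $\delta\to0$ \emph{exactly because} $p<q^{*}_{b}$ makes $r^{\,p(q-N)/q+N-1-b}$ integrable at the origin; and on each annulus $\{\delta<|x|<R\}$ one uses the local Rellich--Kondrachov compactness of $D^{1,q}\hookrightarrow L^{p}$ and boundedness of the weight. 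Therefore $u_n\to U$ strongly in $L^{p}(|x|^{-b})$, so $\int|x|^{-b}|U|^{p}=1/\inf J>0$ and $U\not\equiv0$; weak lower semicontinuity of $\|\nabla\cdot\|_q$ and $\|\cdot\|_2$ then forces $J(U)\le\inf J$, so $U$ is a minimiser. Ruling out concentration at the origin (where the weight is singular) and escape of mass to infinity, in this weighted setting where translation invariance is lost and the underlying space is only a reflexive Banach space, is the main obstacle; the strict subcriticality $p<\frac{q(N-b)}{(N-q)^{+}}$ is precisely what makes the inner and outer estimates close, and the symmetrisation is what replaces the translation invariance a bare concentration--compactness argument would need.

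Finally, with a minimiser $U$ at hand, set $N_U=\|\nabla U\|_q^{q}$, $D_U=\|U\|_2^{2}$, $W_U=\int|x|^{-b}|U|^{p}$; differentiating $\log J$ at $U$ gives the Lagrange equation
\[
-\frac{\sigma_{p,q}}{N_U}\,\Delta_q U+\frac{p-\sigma_{p,q}}{D_U}\,U-\frac{p}{W_U}\,|x|^{-b}|U|^{p-2}U=0 .
\]
Replacing $U$ by $\mu\,U(\lambda\,\cdot)$ with $\mu,\lambda>0$ chosen to normalise the three coefficients turns this into \eqref{I-eq2.1}; the resulting function is non-negative and, by $C^{1,\alpha}$-regularity for the $q$-Laplacian and the strong maximum principle, positive, hence a ground state $Q_{p,q}$ of \eqref{I-eq2.1}. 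Since $J$ is scaling invariant, $\mathcal{K}_{N,p,q}=1/J(Q_{p,q})$, and equality in \eqref{I-p-GN} holds at $Q_{p,q}$. To extract the closed form, test \eqref{I-eq2.1} against $Q_{p,q}$ (Nehari identity) and against $x\cdot\nabla Q_{p,q}$ (Pohozaev identity); the two relations these produce among $\|\nabla Q_{p,q}\|_q^{q}$, $\|Q_{p,q}\|_2^{2}$ and $\int|x|^{-b}|Q_{p,q}|^{p}$, combined with the formula for $\sigma_{p,q}$, reduce $J(Q_{p,q})$ to $\|Q_{p,q}\|_2^{p-2}/p$, giving $\mathcal{K}_{N,p,q}=p/\|Q_{p,q}\|_2^{p-2}$. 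The case $q=2$ is then a substitution: $\sigma_{p,2}=\tfrac12\!\left(N(p-2)+2b\right)$, and \eqref{I-p-GN} becomes \eqref{I-p-GN2}.
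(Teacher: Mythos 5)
Your proposal follows essentially the same route as the paper's own proof (Lemma~\ref{th-main}): minimization of the Weinstein quotient $J$, symmetrization, scaling normalization to $\|\nabla u_n\|_q=\|u_n\|_2=1$, compactness of the weighted embedding to produce a nontrivial minimizer, and then the Euler--Lagrange equation together with the Nehari and Pohozaev identities to identify the sharp constant as $p/\|Q_{p,q}\|_2^{p-2}$. The only noteworthy differences are cosmetic: you obtain compactness of $D^{1,q}(\R^N)\cap L^2(\R^N)\hookrightarrow L^p(|x|^{-b}\,dx)$ via radial decay estimates for the rearranged minimizing sequence, whereas the paper's Lemma~\ref{lm_ap1} gets the same conclusion for arbitrary bounded sequences by exploiting the decay of the weight at infinity plus a H\"older splitting near the origin (so the symmetrization is not strictly needed for compactness there), and your Euler--Lagrange equation carries the sign $-\sigma_{p,q}\Delta_q U$, which is in fact the correct sign for the limit equation to admit a positive decaying ground state.
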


Due to \eqref{I-p-GN}, one could verify easily that the functional $I(u)$ is of class $C^1$ on $S(c)$. Thus, for given $c>0$, to find a ground state type of critical point of the functional $I(u)$ on $S(c)$, when the functional is bounded from below, it is natural to consider the following global minimization problem:
\begin{eqnarray}\label{I-mini}
m(c):=\inf_{u\in S(c)}I(u), \ c>0.
\end{eqnarray}
It is standard to verify that a minimizer of $m(c)$ is indeed a ground state solution of Eq. \eqref{eq1.1}, see for example \cite{C}. As we shall see, the existence of minimizers of $m(c)$ strongly relies on the parameters $p,N,c$. 

Denote by $p_q^*:=\frac{2(q-b)}{N}+q$ the mass critical exponent which is in the sense that the functional is bounded from below as $p<p_q^*$ and unbounded from below as $p>p_q^*$, see Lemma \ref{lm3.1} and Remark \ref{rek3.1}, and let
\begin{equation}\label{I-c1}
c_1^*:=\inf\{c>0\mid m(c)<0\},
\end{equation}
then our first result is concerned with the sharp existence of minimizers of $m(c)$. 
\begin{theorem}[Mass subcritical case]\label{I-th-subcritical}
Assume that $2<p<p_q^*$, with $0 < b < \min\{2,N\}$, $N\geq1$, and $q>2$. Then,
\begin{itemize}
  \item [(1)] If $2<p<\frac{2(2-b)}{N}+2$, then for all $c>0$, $m(c)$ admits a minimizer;
  \item [(2)] If $p=\frac{2(2-b)}{N}+2$, then $0<c_1^*<+\infty$, and $m(c)$ admits a minimizer \textbf{if and only if} $c>c_1^*$;
  \item [(3)] If $\frac{2(2-b)}{N}+2< p <p_q^*$, then $0<c_1^*<+\infty$, and $m(c)$ admits a minimizer \textbf{if and only if} $c\geq c_1^*$.
\end{itemize}
In addition, any minimizer of $m(c)$ can be assumed to be non-negative, radially symmetric and radially decreasing with respect to some point.
\end{theorem}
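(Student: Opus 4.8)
The plan is to produce ground states as minimizers of the global constrained problem $m(c)=\inf_{u\in S(c)}I(u)$ and to run the direct method, exploiting that the weight $|x|^{-b}$ breaks translation invariance and hence tames the concentration--compactness analysis. First, inserting the two Gagliardo--Nirenberg inequalities of Theorem~\ref{I-th-GN} into $I$ shows that for every $c>0$ the functional is bounded from below on $S(c)$ (using \eqref{I-p-GN} and $\sigma_{p,q}<q$, which is exactly the condition $p<p_q^*$), and that every minimizing sequence is bounded in $X$; when $2<p<\frac{2(2-b)}{N}+2$ the exponent $\frac{N(p-2)+2b}{2}$ occurring in \eqref{I-p-GN2} is $<2$, so one in fact obtains coercivity on $S(c)$. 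The key compactness input is that the embedding $X\hookrightarrow L^{p}(\R^N;|x|^{-b}\,dx)$ is compact: outside a large ball $B_R$ the weighted integral is bounded by $C\,R^{-b}\|u_n\|_p^{p}$, hence uniformly small, while inside $B_R$ one uses Rellich's theorem (the hypothesis $p<\frac{q(N-b)}{(N-q)^+}$ guarantees the local integrability that is needed). Thus, for a bounded minimizing sequence $u_n\rightharpoonup u$ in $X$, one has $\int|x|^{-b}|u_n|^{p}\,dx\to\int|x|^{-b}|u|^{p}\,dx$, and together with the Br\'ezis--Lieb lemma and the weak lower semicontinuity of $\|\nabla\cdot\|_2$ and $\|\nabla\cdot\|_q$ this gives $m(c)=I(u)+L$ with $L\ge 0$ and $\|u\|_2^{2}=:\alpha\le c$. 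If $\alpha=0$ then $m(c)\ge I(0)=0$. If $0<\alpha<c$ then, since $m$ is non-increasing (established below), $m(c)=I(u)+L\ge I(u)\ge m(\alpha)\ge m(c)$, which forces $L=0$ and $m(\alpha)=m(c)<0$ with $u$ a minimizer of $m(\alpha)$; this contradicts the strict monotonicity of $m$ on $\{c:m(c)<0\}$, proven below. Hence $\alpha=c$, the convergence $u_n\to u$ is strong in $X$, and $m(c)$ is attained whenever $m(c)<0$.

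Next comes the sign analysis of $m$. Testing $I$ along the mass-preserving dilations $u_t(x)=t^{N/2}u(tx)$ gives $I(u_t)=\tfrac{t^{2}}{2}\|\nabla u\|_2^{2}+\tfrac{t^{\beta}}{q}\|\nabla u\|_q^{q}-\tfrac{t^{\theta}}{p}\int|x|^{-b}|u|^{p}\,dx$ with $\theta:=\tfrac{N(p-2)+2b}{2}$ and $\beta:=\tfrac{N(q-2)+2q}{2}$, where again $\beta>\theta\Leftrightarrow p<p_q^*$. In case (1) one has $\theta<2$, so letting $t\to0$ yields $m(c)<0$ for every $c>0$, and the argument above already provides a minimizer for all $c$. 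In cases (2)--(3) one has $\theta\ge2$; here I would show $m(c)=0$ for $c$ small --- in case (2) this is immediate from the \emph{sharpness} of $\mathcal{K}_{N,p,2}$ in \eqref{I-p-GN2} (which moreover pins down $c_1^*$ explicitly in terms of $\mathcal{K}_{N,p,2}$), and in case (3) by using \eqref{I-p-GN} and \eqref{I-p-GN2} simultaneously, which for $I(u)<0$ traps $\|\nabla u\|_2$ between a quantity tending to $+\infty$ and one tending to $0$ as $c\to0^{+}$, so no such $u$ exists for small $c$. A two-parameter rescaling $u\mapsto a\,u(\mu\cdot)$ produces elements of $S(c)$ with negative energy once $c$ is large, so $c_1^*<+\infty$; adjoining to a near-optimal function a widely spread, far-away bump carrying the missing mass shows $m$ is non-increasing. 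Hence $0<c_1^*<+\infty$ in cases (2)--(3).

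The crux is the strict monotonicity: \emph{if $m(\alpha)<0$ then $m(c)<m(\alpha)$ for all $c>\alpha$}. The plan is to rescale a minimizer $w$ of $m(\alpha)$ (which is the situation forced in the compactness argument above) as $v=a\,w(\mu\cdot)\in S(c)$ and to minimize over $\mu$; since $w$ is a minimizer, the associated scaling map is critical at the base configuration, and the desired strict inequality then reduces to the elementary facts $b<p$ and $(\theta-2)(p\beta-q\theta)>2(\beta-\theta)(\theta-p)$ --- the latter simplifying to $(q-2)(p-b)\,[N(p-2)+2b]>0$ --- both of which hold under the standing hypotheses $2<p<\frac{q(N-b)}{(N-q)^+}$, $q>2$, $0<b<\min\{2,N\}$. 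The same analysis settles the endpoints $c=c_1^*$. In case (2), at $c=c_1^*$ the sharp inequality \eqref{I-p-GN2} gives $I(u)\ge\tfrac1q\|\nabla u\|_q^{q}>0$ for every $u\in S(c_1^*)$, so $m(c_1^*)=0$ is not attained and a minimizer exists iff $c>c_1^*$. In case (3), choosing $c_n\downarrow c_1^*$ with minimizers $u_n$, the trapping estimate above keeps $\|\nabla u_n\|_2$ (hence $\int|x|^{-b}|u_n|^{p}\,dx$) bounded away from $0$, so the weak limit $u_{*}$ is nontrivial; if $\|u_{*}\|_2^{2}=\alpha<c_1^*$ then $u_{*}$ would be a zero-energy minimizer of $m(\alpha)$, which is impossible by the scaling argument above, so $\|u_{*}\|_2^{2}=c_1^*$ and $u_{*}$ is the desired minimizer --- hence a minimizer exists iff $c\ge c_1^*$.

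Finally, for the qualitative properties, I would replace any minimizer $u$ by the Schwarz symmetrization $u^{*}$ of $|u|$: this preserves the $L^{2}$-norm, the P\'olya--Szeg\H{o} inequality gives $\|\nabla u^{*}\|_2\le\|\nabla u\|_2$ and $\|\nabla u^{*}\|_q\le\|\nabla u\|_q$, and since $|x|^{-b}$ is radially decreasing the Hardy--Littlewood rearrangement inequality gives $\int|x|^{-b}|u^{*}|^{p}\,dx\ge\int|x|^{-b}|u|^{p}\,dx$; therefore $I(u^{*})\le I(u)$ and $u^{*}\in S(c)$, so $u^{*}$ is again a minimizer, and it is non-negative, radially symmetric and radially non-increasing. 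I expect the strict monotonicity of $m$ --- equivalently, excluding the loss of a fraction of the mass ``at zero scale'' --- together with the delicate endpoint behaviour at $c=c_1^*$ in case (3), to be the main technical obstacle; the rest is a careful but standard execution of the direct method once the sharp Gagliardo--Nirenberg inequalities are in hand.
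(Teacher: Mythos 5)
Your overall architecture coincides with the paper's: the sharp Gagliardo--Nirenberg inequalities give boundedness from below and boundedness of minimizing sequences, the compactness of the embedding $X\hookrightarrow L^p(\R^N;|x|^{-b}dx)$ (Lemma \ref{lm_ap1}) handles the nonlinear term, the sign analysis of $m(c)$ and the bounds $0<c_1^*<+\infty$ match Lemma \ref{lm3.1}, the endpoint $c=c_1^*$ in case (3) is treated by the same approximation $c_k\downarrow c_1^*$ with the same non-vanishing estimate, and the symmetrization step is identical. Two of your choices genuinely differ. For non-attainment at $c=c_1^*$ in case (2) you argue from the sharpness of \eqref{I-p-GN2}; this is clean but requires you to actually prove the identification $c_1^*=\bigl(p/(2\mathcal{K}_{N,p,2})\bigr)^{2/(p-2)}$, which you only assert, whereas Lemma \ref{lm3.2} instead uses the Pohozaev identity $P(u)=0$ to force $\|\nabla u\|_q=0$. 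More importantly, you exclude dichotomy via monotonicity plus a strict-decrease statement for $m$, whereas Proposition \ref{prop2.1} uses the dilation $u^{t_0}=t_0u(t_0^{-1}x)$, which multiplies both gradient terms by the common factor $t_0^{N}$ and yields the identity $I(u)=t_0^{-N}I(u^{t_0})+\frac{t_0^{p-b}-1}{p}\int |x|^{-b}|u|^{p}dx$ with an explicit positive defect, avoiding any monotonicity input.

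The genuine gap is your strict monotonicity step. Writing $v=a\,w(\mu\cdot)$ with $a^{2}\mu^{-N}=s:=c/\alpha$, one computes
\begin{equation*}
I(v)=\frac{s\mu^{2}}{2}\|\nabla w\|_2^{2}+\frac{s^{q/2}\mu^{\beta}}{q}\|\nabla w\|_q^{q}-\frac{s^{p/2}\mu^{\theta}}{p}\int\frac{|w|^{p}}{|x|^{b}}dx ,
\end{equation*}
and at $(s,\mu)=(1,1)$ the $\mu$-derivative is exactly $P(w)=0$ (since the minimizer $w$ is a constrained critical point), while the $s$-derivative equals $\tfrac12\bigl(\|\nabla w\|_2^{2}+\|\nabla w\|_q^{q}-\int|x|^{-b}|w|^{p}dx\bigr)=\tfrac12\lambda_\alpha\|w\|_2^{2}$, i.e.\ one half of the Lagrange multiplier times the mass. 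Because the dilation direction is first-order critical, no choice of $\mu(s)$ can compensate a positive first-order term in $s$; so strict decrease along your family is governed by the sign of $\lambda_\alpha$, which depends on the relative sizes of the three norms of $w$ and not on the exponents alone. Consequently the claimed reduction to the purely numerical inequality $(\theta-2)(p\beta-q\theta)>2(\beta-\theta)(\theta-p)$ cannot be the whole argument. From $I(w)<0$ one only gets $\int|x|^{-b}|w|^{p}dx>\tfrac p2\|\nabla w\|_2^{2}+\tfrac pq\|\nabla w\|_q^{q}$, which forces $\lambda_\alpha<0$ when $p\ge q$ but not when $2<p<q$ --- a range the theorem allows (e.g.\ $N=1$, $q$ large, $p$ close to $2$, since the only upper bound is $p<p_q^{*}=\frac{2(q-b)}{N}+q$). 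You need either a proof that $\lambda_\alpha<0$ for every negative-energy global minimizer in the full parameter range, or a different rescaling; the identity behind \eqref{3.12} in Proposition \ref{prop2.1}, in which both gradient terms scale by the same power, is the natural candidate to compare against, as it produces a strictly positive gain $\frac{t_0^{p-b}-1}{p}\int|x|^{-b}|u|^{p}dx$ using only $p>b$ and bypasses the Lagrange multiplier entirely. The same caveat affects your endpoint discussion in case (3), which again invokes the impossibility of a ``zero-energy minimizer of $m(\alpha)$ with $\alpha<c_1^*$'' through the same scaling argument.
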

\begin{remark}\label{rek1.1}
To prove the existence of minimizers of $m(c)$, it is essential to prove that $m(c)<0$, which helps to rule out the vanishing and dichotomy of a minimizing sequence of $m(c)$. Then the concentration-compactness principle derives the compactness of the minimizing sequence. Note particularly that in the critical case $c=c_1^*$, Lemma \ref{lm3.1} shows that $m(c_1^*)=0$, the former strategy does not work, instead, we use some approximation arguments, saying $c_k\searrow c_1^*$, with $m(c_k)<0$ having a minimizer $u_k$, then by showing the convergence of $u_k$, we have the limit function as a minimizer of $m(c_1^*)$. However, it should be distinguished the difference that, except for the critical case $c=c_1^*$, we have proved that any minimizing sequence of $m(c)$ is pre-compact, which then implies the orbital stability of solutions in the sense of Cazenave and Lions \cite{C}, for the corresponding Cauchy problem. 
\end{remark}
Concerned with the mass critical case, we prove the non-attain of $m(c)$. 
\begin{theorem}[Mass critical case]\label{I-th-critical}
Assume that $p=p_q^*$, with $0 < b < \min\{2,N\}$, $N\geq1$, and $q>2$. Then, there exists $c_2^*>0$ which is given precisely by \eqref{4.1}, such that the functional $I(u)$ has no any critical points for all $c\leq c_2^*$. In particular, for any $c>0$, $m(c)$ has no minimizers.
\end{theorem}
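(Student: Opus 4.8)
The plan rests on two facts that are special to the mass-critical exponent $p=p_q^*$. First, a direct computation gives $\sigma_{p,q}=q$, so the sharp inequality \eqref{I-p-GN} reduces to $\int|u|^p|x|^{-b}\,dx\le\mathcal K_{N,p,q}\|\nabla u\|_q^q\|u\|_2^{p-q}$ with $\mathcal K_{N,p,q}=p\,\|Q_{p,q}\|_2^{-(p-2)}$; hence on $S(c)$ one has $I(u)\ge\frac12\|\nabla u\|_2^2+\big(\frac1q-c^{(p-q)/2}\|Q_{p,q}\|_2^{-(p-2)}\big)\|\nabla u\|_q^q$. Second, the $L^2$-preserving dilation $u_t(x):=t^{N/2}u(tx)$ maps $S(c)$ into itself, and since at $p=p_q^*$ both $\|\nabla u_t\|_q^q$ and $\int|u_t|^p|x|^{-b}\,dx$ scale as $t^{\gamma}$ with $\gamma:=q+\frac{N(q-2)}{2}>2$, we have $I(u_t)=\frac{t^2}{2}\|\nabla u\|_2^2+t^{\gamma}\big(\frac1q\|\nabla u\|_q^q-\frac1p\int|u|^p|x|^{-b}\,dx\big)$. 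I would set $c_2^*:=\big(\|Q_{p,q}\|_2^{p-2}/q\big)^{2/(p-q)}$, the value at which the coefficient of $\|\nabla u\|_q^q$ in the lower bound changes sign, and check that it coincides with the constant \eqref{4.1}.

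Next I would prove that $I|_{S(c)}$ has no critical point when $c\le c_2^*$. If $u\in S(c)$ is such a critical point, the Lagrange multiplier rule makes $u$ a weak solution of \eqref{eq1.1} for some $\lambda\in\R$; combining the Nehari identity $\|\nabla u\|_2^2+\|\nabla u\|_q^q=\lambda c+\int|u|^p|x|^{-b}\,dx$ with the Pohozaev identity and eliminating $\lambda$ yields $\int|u|^p|x|^{-b}\,dx=\frac{p}{\gamma}\|\nabla u\|_2^2+\frac{p}{q}\|\nabla u\|_q^q$ — which is exactly the relation $\frac{d}{dt}I(u_t)\big|_{t=1}=0$, valid because $t\mapsto u_t$ is a $C^1$ curve in $S(c)$. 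Feeding this into the Gagliardo--Nirenberg lower bound gives $\frac1\gamma\|\nabla u\|_2^2\le\big(c^{(p-q)/2}\|Q_{p,q}\|_2^{-(p-2)}-\frac1q\big)\|\nabla u\|_q^q$, and for $c\le c_2^*$ the right-hand side is $\le0$. Hence $\nabla u\equiv0$, so $u\equiv0\notin S(c)$, a contradiction. Since moreover the lower bound shows $m(c)\ge0$ is finite here, any minimizer of $m(c)$ would be a constrained critical point, so none exists.

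For $c>c_2^*$ I would show $m(c)=-\infty$, hence again no minimizer. By the sharpness of \eqref{I-p-GN} and density of $C_c^\infty(\R^N)$ in $S(c)$, $\sup_{u\in S(c)}\|\nabla u\|_q^{-q}\int|u|^p|x|^{-b}\,dx=\mathcal K_{N,p,q}\,c^{(p-q)/2}$, which exceeds $p/q$ precisely when $c>c_2^*$; choosing $v\in S(c)$ with $\frac1q\|\nabla v\|_q^q<\frac1p\int|v|^p|x|^{-b}\,dx$ and letting $t\to+\infty$ in the formula for $I(v_t)$ (using $\gamma>2$) gives $I(v_t)\to-\infty$ along $S(c)$. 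Combined with the previous paragraph this establishes both claims: $I|_{S(c)}$ has no critical point for $c\le c_2^*$, and $m(c)$ has no minimizer for every $c>0$.

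The step I expect to be the main obstacle is the rigorous Pohozaev identity for weak solutions of the singular mixed $(2,q)$-Laplacian equation \eqref{eq1.1}: one needs enough regularity and decay of $u$ to integrate by parts against $x\cdot\nabla u$, i.e.\ DiBenedetto-type estimates for the $q$-Laplacian term, a bootstrap for the subcritical power nonlinearity, and the local integrability of $|x|^{-b}$ together with $\mathrm{div}(|x|^{-b}x)=(N-b)|x|^{-b}$ (all available since $0<b<\min\{2,N\}$), which can be imported from the cited literature on $(p,q)$-equations. One should also record at the outset that $p_q^*<\frac{q(N-b)}{(N-q)^+}$, so that Theorem \ref{I-th-GN} indeed applies; the remaining steps are just bookkeeping of scaling exponents.
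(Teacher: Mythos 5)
Your proposal is correct and follows essentially the same route as the paper's proof (Lemma \ref{lm4.1} combined with the Pohozaev identity of Lemma \ref{lm5.0}): reduce the Gagliardo--Nirenberg inequality at $\sigma_{p,q}=q$, identify $c_2^*$ as the threshold where the coefficient of $\|\nabla u\|_q^q$ changes sign, combine the Nehari and Pohozaev identities (your eliminated-$\lambda$ relation is exactly $P(u)=0$) to force $\nabla u\equiv 0$ when $c\le c_2^*$, and show $m(c)=-\infty$ for $c>c_2^*$. The only cosmetic differences are that the paper realizes a competitor on $S(c)$ by explicitly rescaling the ground state $Q_q^*$ through the two-parameter scaling that leaves the Weinstein quotient invariant (which is the correct justification for your supremum claim, rather than density of $C_c^\infty$), and that it proves the Pohozaev identity only by the formal integration by parts you rightly flag as the technical point.
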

\begin{remark}\label{rek1.2}
In Lemma \ref{lm4.1}, we show that 
\begin{equation}\label{I-4.2}
  m(c)=\begin {cases}
  0 , & 0<c \leq c_2^{*};\\
  -\infty, & c>c_2^{*}.
  \end{cases}
\end{equation}
Though in this case for any $c>0$, $m(c)$ has no minimizers, we can observe from \eqref{4.4} that when $c>c_2^*$, there is a mountain pass geometry of $I(u)$ on $S(c)$, then one could expect in this case the existence of a mountain-pass type of solution.
\end{remark}

In what follows, we consider the mass-supercritical case, $p>p_q^*$, where as mentioned above the functional is unbounded from below, then we had to turn to consider other type of critical points of $I(u)$ on $S(c)$. Inspired by the work of \cite{BJL, J, LH}, we consider the following local type minimization problem:
\begin{equation}\label{I-localmini}
\gamma(c) := \inf_{u \in V(c)} I(u),\ c>0,
\end{equation}
where $V(c):=\{u\in S(c)\ :\ P(u)=0\}$, with
\begin{equation}\label{I-Q}
P(u) := \| \nabla u\|_2^2 + \frac{N(q-2)+2q}{2q} \| \nabla u\|_q^q - \frac{N(p-2)+2b}{2p} \int\frac{|u|^p}{|x|^b}dx.
\end{equation}
In Lemma \ref{lm5.0}, we show that if $u$ is a weak solution of Eq. \eqref{eq1.1}, then necessarily $P(u)=0$. Therefore, ``$P(u)=0$" is referred as a variant of Pohozaev identity. In addition, from Lemma \ref{lm5.1} and Lemma \ref{lm5.2}, we see that for all $c>0$, $V(c)\neq \emptyset$, hence $\gamma(c)$ is well-defined on $\R^+$.
\begin{theorem}[Mass supercritical case]\label{I-th-supercritical}
Assume that $p_q^*<p<\frac{q(N-b)}{(N-q)^+}$. Then $\gamma(c)$ admits a minimizer, provided that $p,q$ and $c$ satisfy one of the following conditions:
\begin{itemize}
  \item [(1)] If $N=1,2$, $p_q^* < p <+\infty$, and $c >0$;
  \item [(2)] If $N\geq 3$ with $q<\frac{2(N^2-2b)}{N^2-4}$, $p_q^* < p < 2_b^*:=\frac{2(N-b)}{(N-2)}$, and $c>0$.
\end{itemize}
In addition, any minimizer of $\gamma(c)$ can be assumed to be non-negative, radially symmetric and radially decreasing with respect to some point. Moreover, for a minimizer $u_c$ of $\gamma(c)$, there exists $\lambda_c<0$, such that $(\lambda_c, u_c)$ solves Eq. \eqref{eq1.1}, and 
\begin{align}\label{I-r5.1}
I(u_c)\to +\infty,\ \mbox{and }\ \lambda_c\to -\infty,  \ \mbox{as }\ c\to 0^+.
\end{align} 
\begin{align}\label{I-r5.2}
I(u_c)\to 0,\ \mbox{and }\ \lambda_c\to 0^-,  \ \mbox{as }\ c\to \infty.
\end{align}
\end{theorem}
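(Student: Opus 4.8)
The plan is to analyze the local minimization problem $\gamma(c)=\inf_{V(c)}I(u)$ via a fibering-map decomposition and the concentration-compactness principle, following the scheme of \cite{BJL,J,LH} but adapted to the $(2,q)$-structure and the weighted nonlinearity. For fixed $u\in S(c)$, consider the scaling $u_t(x)=t^{N/2}u(tx)$, which preserves the $L^2$-norm; then $\|\nabla u_t\|_2^2=t^2\|\nabla u\|_2^2$, $\|\nabla u_t\|_q^q=t^{q+N(q-2)/2}\|\nabla u\|_q^q$, and $\int|u_t|^p/|x|^b\,dx=t^{N(p-2)/2+b}\int|u|^p/|x|^b\,dx$. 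In the mass-supercritical regime $p>p_q^*$ one has $N(p-2)/2+b>q+N(q-2)/2>2$, so the fibering map $t\mapsto I(u_t)$ is positive for small $t$, tends to $-\infty$ as $t\to\infty$, and has a unique critical point $t(u)>0$ which is a strict global maximum; moreover $u_{t(u)}\in V(c)$ and $P(u)=0$ is exactly the condition $t(u)=1$. This yields the key mountain-pass identification $\gamma(c)=\inf_{u\in S(c)}\max_{t>0}I(u_t)>0$, and shows $\gamma(c)$ is well-defined and positive. One then shows $\gamma(c)$ is nonincreasing and that the strict subadditivity $\gamma(c)<\gamma(c_1)+\gamma(c-c_1)$ holds for $0<c_1<c$, using the scaling behavior of the constraint and the strict monotonicity of $t(u)$.

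Next I would take a minimizing sequence $(u_n)\subset V(c)$ for $\gamma(c)$; by the symmetric-decreasing rearrangement (which increases $\int|u|^p/|x|^b\,dx$ since $|x|^{-b}$ is radially decreasing, and does not increase the gradient terms) I may assume each $u_n$ is non-negative, radial, and radially decreasing, then re-project onto $V(c)$ via the scaling above, which only decreases $I$. Boundedness of $(u_n)$ in $X$ follows by combining $P(u_n)=0$ with $I(u_n)\to\gamma(c)$: eliminating $\int|u_n|^p/|x|^b\,dx$ between the two relations gives a coercive combination of $\|\nabla u_n\|_2^2$ and $\|\nabla u_n\|_q^q$ bounded above, precisely because the coefficient arithmetic $\frac{1}{p}\cdot\frac{N(p-2)+2b}{2p}>\frac1q\cdot\frac{N(q-2)+2q}{2q}$ etc. works out in the supercritical range; together with the mass constraint this bounds $(u_n)$ in $X=H^1\cap D^{1,q}$. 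The dimensional restrictions in (1)–(2) enter here and in the compact embedding step: for $N=1,2$ radial functions embed compactly into $L^p$ for the relevant $p$, and for $N\ge3$ the bound $q<\frac{2(N^2-2b)}{N^2-4}$ together with $p<2_b^*$ ensures $\sigma_{p,q}<q$ (strict mass-subcriticality of the $L^q$-GN interpolation relative to $\|\nabla u\|_q$), which is what prevents mass from escaping. Passing to a weak limit $u$ and using the compact embedding of radial functions (Strauss-type lemma in $X$) into the weighted $L^p$-space, one gets $\int|u_n|^p/|x|^b\to\int|u|^p/|x|^b$; then weak lower semicontinuity of the norms plus $P(u)\le 0$ forces, after re-projecting, $I(u)\le\gamma(c)$, while $\|u\|_2^2\le c$. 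If $\|u\|_2^2=c'<c$ a scaling/subadditivity argument contradicts $\gamma(c)<\gamma(c')+\gamma(c-c')$, so $\|u\|_2^2=c$ and $u$ is a minimizer; norm convergence then upgrades weak to strong convergence.

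To obtain the Euler–Lagrange equation, apply the Lagrange multiplier theorem on $S(c)$: since $P(u_c)=0$ is a natural constraint (the map $u\mapsto P(u)$ has nonvanishing derivative transverse to $S(c)$ at a minimizer, again by the unique-maximum property of the fibering map), a minimizer of $\gamma(c)$ on $V(c)$ is a critical point of $I$ on $S(c)$, giving $(\lambda_c,u_c)$ solving \eqref{eq1.1}. Testing the equation against $u_c$ and combining with $P(u_c)=0$ yields $\lambda_c c=\lambda_c\|u_c\|_2^2$ as an explicit negative combination of the gradient terms (the coefficients again arranged so the sign is negative in the supercritical range), hence $\lambda_c<0$. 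Finally, the asymptotics \eqref{I-r5.1}–\eqref{I-r5.2} follow from sharp estimates on $\gamma(c)$: using the GN inequality \eqref{I-p-GN} in $P(u)=0$ one derives two-sided bounds $\gamma(c)\asymp c^{-\alpha}$ as $c\to0^+$ with $\alpha>0$ and $\gamma(c)\to0$ as $c\to\infty$, from which the stated limits for $I(u_c)$ are immediate, and the corresponding behavior of $\lambda_c$ follows from the $\lambda_c$-identity together with the GN control on the gradient terms along minimizers.

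The main obstacle I anticipate is the compactness of the minimizing sequence in the mass-supercritical local problem: unlike the global subcritical case in Theorem \ref{I-th-subcritical}, here $I$ is unbounded below on $S(c)$, so vanishing and dichotomy must be excluded purely through the strict subadditivity of $\gamma$ and the positivity $\gamma(c)>0$, and one must carefully verify that the combined $H^1\cap D^{1,q}$ norm is controlled — the interplay of the two gradient terms with different scaling is exactly where the dimensional hypotheses (1)–(2) become indispensable, and getting the compact embedding into the weighted $L^p$ space right (handling both the behavior near $x=0$, where $|x|^{-b}$ is singular, and near infinity) is the technically delicate point.
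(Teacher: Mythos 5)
Your overall scheme (fibering map, $\gamma(c)>0$, monotonicity of $c\mapsto\gamma(c)$, compactness of a minimizing sequence, natural-constraint/Lagrange-multiplier argument, sign of $\lambda_c$, asymptotics) matches the paper's, but there is a genuine error in the step where you explain how the hypotheses (1)--(2) enter, and it propagates into your compactness argument. You claim that for $N\ge 3$ the conditions $q<\frac{2(N^2-2b)}{N^2-4}$ and $p<2_b^*$ ensure $\sigma_{p,q}<q$, and that this ``strict mass-subcriticality'' is what prevents mass from escaping. This is false: from $\sigma_{p,q}=\frac{q[N(p-2)+2b]}{N(q-2)+2q}$ one checks that $\sigma_{p,q}>q$ if and only if $p>p_q^*$, so in the entire mass-supercritical range $\sigma_{p,q}>q$ regardless of (1)--(2). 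The condition $q<\frac{2(N^2-2b)}{N^2-4}$ is merely equivalent to $p_q^*<2_b^*$ (nonemptiness of the admissible $p$-interval), and the true role of $p\le 2_b^*$ is entirely different: combining $P(u_c)=0$ with the Nehari identity gives
\begin{equation*}
\lambda_c\|u_c\|_2^2=\frac{(N-2)(p-2_b^*)}{N(p-2)+2b}\|\nabla u_c\|_2^2+\frac{2(N-q)(p-q_b^*)}{q[N(p-2)+2b]}\|\nabla u_c\|_q^q,
\end{equation*}
and (1)--(2) are exactly what make both coefficients nonpositive, hence $\lambda_c<0$. That sign is then the engine of the whole compactness argument: it yields (via an implicit-function-theorem deformation, the paper's Lemma 5.8) that $c\mapsto\gamma(c)$ is \emph{strictly} decreasing near the mass of any constrained critical point with negative multiplier, which is what forces the weak limit $u_0$ (already known to satisfy $P(u_0)=0$ and $I(u_0)=\gamma(\|u_0\|_2^2)$) to have full mass $\|u_0\|_2^2=c$. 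Your substitute for this step --- strict subadditivity $\gamma(c)<\gamma(c_1)+\gamma(c-c_1)$ --- is asserted without proof and is not the natural tool for this local (Pohozaev-constrained) problem; without either the subadditivity or the strict-monotonicity mechanism, your argument cannot exclude $\|u_0\|_2^2<c$. Note also that boundedness of the minimizing sequence needs none of (1)--(2): since $P(u_n)=0$, $I(u_n)$ equals a positive combination of $\|\nabla u_n\|_2^2$ and $\|\nabla u_n\|_q^q$ for any $p>p_q^*$.

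A secondary issue: you attribute the convergence $\int|x|^{-b}|u_n|^p\to\int|x|^{-b}|u_0|^p$ to compact embedding of radial functions, which fails for $N=1$ (Strauss) and in any case requires first symmetrizing the minimizing sequence and reprojecting onto $V(c)$. The paper instead exploits that the weight $|x|^{-b}$ itself produces compactness: for $b>0$ the functional $u\mapsto\int|x|^{-b}|u|^p$ is weakly continuous on bounded subsets of $X$ without any symmetry (splitting the integral at radius $R$ and using $R^{-b}\to 0$). This is worth adopting, as it removes the radial-symmetry detour entirely and covers $N=1$.
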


\begin{remark}\label{rek1.3} The main idea of the proof is to show the compactness of a minimizing sequence of $\gamma(c)$. However, compared with the case of $m(c)$ in Theorem \ref{I-th-subcritical}, here it is more delicate since there additional information $P(u)=0$ is involved. To show that the limit function of a minimizing sequence (up to a subsequence if necessary), locates on $S(c)$, one needs to prove the strictly decreasing of the mapping: $c\mapsto \gamma(c)$. To this aim, we prove in Lemma \ref{lm5.8} that if its corresponding Lagrange multiplier is negative, then the strictly decreasing holds. It is where the conditions $(1) (2)$ of Theorem \ref{I-th-supercritical} are proposed. See Lemma \ref{lm5.5} for the details. 
\end{remark}

\begin{remark}\label{rek1.4}
Let us note that, when $N\geq 3$, $p_q^*<2_b^*$ if and only if $2<q<\frac{2(N^2-2b)}{N^2-4}$. When $2_q^*\leq p<\frac{q(N-b)}{(N-q)^+}$, we conjecture that $\gamma(c)$ admits a minimizer as $c>0$ small. However, a rigours proof is still open for us. In Remark \ref{rek5.1}, we provide a part proof that the corresponding Lagrange multiplier is nagetive as $c>0$ small, then the existence of minimizers follows. 
To improve further the condition of $(2)$, one may need to know whether the uniqueness or regularity of positive solutions to the zero-mass problem of Eq. \eqref{eq1.1} (i.e. $\lambda=0$) holds or not. Once we know that the positive solution of the zero-mass problem of Eq. \eqref{eq1.1} is unique, or solutions do not belong to $L^2(\R^N)$, then the conditions on $p,c$ can be given sharply for the existence of minimizers of $\gamma(c)$. In this direction, there have already been some trying. We refer the readers to the recent work \cite{BFG} and \cite{JZZ} for other models. However, since here mixing a Laplacian operator and $p$-Laplaican one with different rates, the situation is more complicated than the one in \cite{BFG, JZZ}, we might as well leave this interesting topic to a forthcoming paper.
\end{remark}

As a byproduct of some elements developed in the mass-supercritical case, inspired mainly by the work of Bartsch et.al \cite{BV}, we use the fountain theorem to prove the existence of infinitely many bound state normalized solutions of Eq. \eqref{eq1.1}. 
\begin{theorem}\label{I-th-multiplicity}
Assume that $p_q^*<p<\frac{q(N-b)}{(N-q)^+}$, and $p,q,c$ satisfy one of the following conditions:
\begin{itemize}
  \item [(1)] If $N=1,2$, $p_q^* < p <+\infty$, and $c >0$;
  \item [(2)] If $N\geq 3$ with $q<\frac{2(N^2-2b)}{N^2-4}$, $p_q^* < p < 2_b^*:=\frac{2(N-b)}{(N-2)}$, and $c>0$.
\end{itemize}
Then, there exists a sequence of couple solutions $\{(\lambda_n,u_n)\}$ of Eq. \eqref{eq1.1}, with $\|u_n\|_2^2=c$ and $\lambda_n<0$. Moreover, $I(u_n)\to +\infty$ as $n\to \infty$.
\end{theorem}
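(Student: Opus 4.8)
The plan is to run a genuinely min-max (fountain-theorem) argument on the $L^2$-sphere, exploiting the constraint $P(u)=0$ as in the supercritical analysis leading to Theorem \ref{I-th-supercritical}. First I would set up the right functional-analytic framework: work in the subspace $X_{rad}:=H^1_{rad}(\R^N)\cap D^{1,q}_{rad}(\R^N)$ of radial functions (legitimate by the symmetric-decreasing rearrangement inequalities already invoked for $\gamma(c)$, and because the Palais principle of symmetric criticality applies to the $O(N)$-invariant functional $I$), and introduce the natural $\Z_2$-action $u\mapsto -u$, under which $I$ and the constraint $S(c)$ are invariant. Following Bartsch--Soave and Bartsch--de Valeriola \cite{BV}, I would then pass to the ``fiber map'' reparametrization $s\star u$ (the $L^2$-preserving scaling $u\mapsto s^{N/2}u(s\cdot)$) so that, for each fixed $u\in S(c)$, $s\mapsto I(s\star u)$ has a unique strict maximum at the value $s(u)$ with $s(u)\star u\in V(c)$; this gives a homeomorphism between $S(c)$ and $V(c)$ and shows $\gamma(c)=\inf_{V(c)}I=\inf_{u\in S(c)}\max_{s>0}I(s\star u)$, which is exactly the structure the fountain theorem needs.

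Next I would build the min-max values. Fix an increasing sequence of finite-dimensional $\Z_2$-invariant subspaces $E_n\subset X_{rad}$ with $\overline{\bigcup_n E_n}=X_{rad}$, set $A_n:=\{u\in E_n\cap S(c)\}$ (a sphere, hence of Krasnoselskii genus $\ge n$ in the relevant sense), and define
\begin{equation}\label{eq:ck}
c_n:=\inf_{h\in\Gamma_n}\ \max_{u\in A_n}\ I\bigl(h(u)\bigr),
\end{equation}
where $\Gamma_n$ is the class of $\Z_2$-equivariant continuous maps $S(c)\to S(c)$ suitably deformed from the identity (or, equivalently, work with the stretched functional $\widetilde I(u):=I(s(u)\star u)$ on $S(c)$ and the standard genus-based fountain values). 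The two things to check are: (i) a \emph{lower bound} $c_n\to+\infty$, and (ii) an \emph{upper bound} $c_n<+\infty$ for each $n$. For (i): any $h(u)$ still meets a set of genus $\ge n$ intersected with $V(c)$, and on $V(c)$ one has (as in Lemma \ref{lm5.1}/Lemma \ref{lm5.2}) a coercivity-type control $I(u)\ge \varphi(\|\nabla u\|_2^2+\|\nabla u\|_q^q)$ with $\varphi\to\infty$; combining this with the fact that on an $n$-dimensional sphere in $X_{rad}$ the gradient norm is bounded below in terms of $c$ and the dimension (all norms equivalent on $E_n$, plus the $L^2$-normalization forces $\|\nabla u\|$ large when the low Fourier/eigen-modes are exhausted) yields $c_n\to\infty$. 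For (ii): on the finite-dimensional $A_n$, $\max_{s>0}I(s\star u)$ is a continuous function of $u$ over a compact set, hence finite; taking $h=\mathrm{id}$ composed with the optimal fiber scaling gives $c_n\le \max_{A_n}\widetilde I<\infty$.

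Then I would verify the Palais--Smale condition at each level $c_n$. This is precisely the compactness engine of Theorem \ref{I-th-supercritical}: a PS sequence $(u_k)$ for $\widetilde I$ at level $c_n$ produces, via the Lagrange-multiplier analysis, multipliers $\lambda_k$ and the Pohozaev relation $P(u_k)\to0$; boundedness in $X$ follows from $I(u_k)$ bounded together with $P(u_k)\to0$ (the same linear combination of $I$ and $P$ used to get coercivity of $\gamma(c)$); then, working in $X_{rad}$, the compact embeddings $H^1_{rad}\hookrightarrow L^p(|x|^{-b}dx)$ (for $p$ in the stated range — this is where conditions (1),(2) enter, just as in Lemma \ref{lm5.5}) upgrade weak to strong convergence after showing $\lambda_k\to\lambda_\infty<0$; the sign $\lambda_\infty<0$ is extracted from Lemma \ref{lm5.8}-type estimates and prevents the mass from escaping, so $u_k\to u$ strongly in $X$ with $\|u\|_2^2=c$. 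Hence each $c_n$ is a critical value of $\widetilde I$ on $S(c)$, giving a solution $(\lambda_n,u_n)$ with $\lambda_n<0$, $\|u_n\|_2^2=c$; distinctness for large $n$ (so that the sequence is genuinely infinite) comes from $I(u_n)=c_n\to+\infty$, which also delivers the last assertion. The main obstacle I anticipate is (i): making the lower bound $c_n\to\infty$ rigorous requires carefully quantifying how the $L^2$-constraint forces the gradient norms to blow up on higher-dimensional symmetric subspaces, and then feeding that into the $V(c)$-coercivity — one must be sure the genus/intersection argument survives the fiber-map reparametrization and that the mixed $(2,q)$ scaling does not spoil the monotonicity of $s\mapsto I(s\star u)$; this is exactly the place where the two different diffusion rates make the bookkeeping heavier than in the pure-Laplacian case of \cite{BV}.
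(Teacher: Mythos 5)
Your overall architecture matches the paper's: a fountain-type min-max on $S(c)$ built over finite-dimensional subspaces, with the scaling map used to create the linking geometry, Palais--Smale sequences carrying the extra information $P(u_k)\to 0$, and compactness recovered from the negativity of the Lagrange multiplier (the paper's Lemma \ref{lm-PSC}, Proposition \ref{prop-compact} and Lemma \ref{lm5.5}). Two remarks on inessential differences: the paper does not restrict to radial functions at all — the weight $|x|^{-b}$ already makes $u\mapsto\int|x|^{-b}|u|^p\,dx$ weakly continuous on the whole of $X$ (Lemma \ref{lm_ap1}), so the symmetric-criticality detour is unnecessary; and the paper obtains the constrained PS sequence with $P(u_k)\to0$ from the Cingolani--Tanaka deformation argument \cite{CT} rather than from a stretched functional $\widetilde I(u)=I(s(u)\star u)$, though the latter would serve the same purpose.

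The genuine gap is the one you flag yourself: step (i), the divergence $c_n\to+\infty$ of the min-max levels, and the mechanism you sketch for it is not the right one. Coercivity of $I$ on $V(c)$ (your ``$I\ge\varphi(\|\nabla u\|_2^2+\|\nabla u\|_q^q)$'') cannot by itself force the levels to blow up, because $\inf_{V(c)}I=\gamma(c)$ is a \emph{fixed finite number}; and the claim that the $L^2$-normalization forces $\|\nabla u\|$ to be large on the spheres $E_n\cap S(c)$ is false — these are finite-dimensional spheres on which $\max_{s>0}I(s\star u)$ is bounded (that is precisely your upper bound (ii)), so nothing blows up there. The correct mechanism lives on the orthogonal complements, not on the finite-dimensional pieces: one must show that the Sobolev-type quotient
$\mu_n:=\inf_{u\in V_{n-1}^{\perp}}\|u\|_X^2\big/\bigl(\int|x|^{-b}|u|^p\,dx\bigr)^{2/p}$
tends to $+\infty$ (paper's Lemma \ref{lm5.7}, which uses the weighted compactness of Lemma \ref{lm_ap1} and the density of $\bigcup_nV_n$), deduce that $I\ge b_n\to+\infty$ on the barrier sets $B_n=\{u\in S(c)\cap V_{n-1}^{\perp}:\|\nabla u\|_2^2+\|\nabla u\|_q^q=\rho_n\}$ for a suitable radius $\rho_n$ calibrated to $\mu_n$ (Lemma \ref{lm5.9}), and then prove an intersection lemma — via an odd-map/degree argument as in \cite{BV} — showing that every admissible min-max surface must cross $B_n$ (Lemma \ref{lm5.10}). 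Without the quantitative input $\mu_n\to\infty$ on $V_{n-1}^{\perp}$, the genus count alone gives no lower bound on the levels, so as written your argument does not yield $I(u_n)\to+\infty$, which is both the multiplicity statement and the last assertion of the theorem.
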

\begin{remark}\label{rek1.5}
Similar results have been established for the Schr\"odinger-Poisson-Slater equations in \cite{Luo}, for a class of nonlinear Choquard equations in \cite{BLL}, and recently for a type of $(2,q)$-Laplacian Equation with homogenous nonlinearity in \cite{BY}. However, compared with the mentioned references, in our proof, we use some newly developed techniques proposed by Cingolani and Tanaka \cite{CT}, which help to simply the proof. 
\end{remark}

This paper is structured as follows. In Section \ref{sec-GN}, we rigorously establish a sharp inhomogeneous $L^{q}$-Gagliardo-Nirenberg inequality, which will crucially used in the proofs of the main results, also we prove a Pohozaev identity. We handle the mass subcritical case and mass critical case respectively in Section \ref{sec-subcritical}, and Section \ref{sec-critical}, where we give particularly the properties of $m(c)$. In Section \ref{sec-supercritical}, we prove the existence of minimizers of $\gamma(c)$ and a multiplicity of bounded states of normalized solutions. The proofs of the main results are carried out in Section \ref{sec-proofs}. Finally, we establish in Section \ref{sec-apt} several technique lemmas, which are necessarily used in our proofs. 

\section{Some Preliminaries}\label{sec-GN}
In this section, we give some preliminaries. Inspired by the work of Genoud \cite{G}, we first prove the following sharp inhomogeneous type of $L^q$- Gagliardo-Nirenberg inequality, which is interesting by itself but also essential for various estimates in our proofs. 
\begin{lemma}[Sharp inhomogeneous $L^{q}$-Gagliardo-Nirenberg inequality]\label{th-main}
Assume that $2<p<\frac{q(N-b)}{(N-q)^+}$, with $0 < b < \min\{2,N\}$, $N\geq1$, $q\geq 2 $, then  there exists a sharp constant $\mathcal{K}_{N,p,q}:=\frac{p}{\|Q_{p,q}\|_2^{p-2}}>0$, such that
\\
\begin{equation}\label{p-GN}
  \int \frac{|u|^{p}}{|x|^{b}} dx \leq \mathcal{K}_{N,p,q} \|\nabla u\|_q^{\sigma_{p,q}}\|u\|_2^{p-\sigma_{p,q}}, \quad \forall u \in D^{1,q}(\R^N)\cap L^{2}(\mathbb{R}^{N}),
\end{equation}
where $\sigma_{p,q}:=\frac{q[N(p-2)+2b]}{N(q-2)+2q}$, and $Q_{p,q}$ is a fixed ground state solution of
\begin{equation}\label{eq2.1}
\sigma_{p,q} \triangle_q Q + (p-\sigma_{p,q}) Q - |x|^{-b}|Q|^{p-2}Q=0.
\end{equation}
Moreover, the equality in \eqref{p-GN} holds if $u=Q_{p,q}$. In particular, let $q=2$, then \eqref{p-GN} is reduced to 
\begin{equation}\label{p-GN2}
  \int \frac{|u|^{p}}{|x|^{b}} dx \leq \mathcal{K}_{N,p,2} \|\nabla u\|_2^{\frac{N(p-2)+2b}{2}}\|u\|_2^{\frac{2N+2b-(N-2)p}{2}}, \quad \forall u \in H^1(\R^N).
\end{equation}
\end{lemma}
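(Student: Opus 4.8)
The plan is to realize the sharp constant in \eqref{p-GN} as the reciprocal of the minimum of the scaling-invariant Weinstein-type functional
\[
J(u):=\frac{\|\nabla u\|_q^{\sigma_{p,q}}\|u\|_2^{p-\sigma_{p,q}}}{\int |x|^{-b}|u|^p\,dx},\qquad u\in D^{1,q}(\R^N)\cap L^2(\R^N)\setminus\{0\},
\]
and to identify the minimizer with a ground state of \eqref{eq2.1}. First I would check that the inequality \eqref{p-GN} makes sense at all for \emph{some} finite constant: this is a Hardy–Gagliardo–Nirenberg–Sobolev interpolation, which one obtains by combining the Caffarelli–Kohn–Nirenberg (Hardy–Sobolev) inequality $\int |x|^{-b}|u|^{p_\ast}\,dx\lesssim\|\nabla u\|_q^{p_\ast}$ at the endpoint exponent $p_\ast=\frac{q(N-b)}{(N-q)^+}$ with the plain bound $\|u\|_2^2$, via Hölder in the exponent $p$; the exponent $\sigma_{p,q}$ is forced by scaling invariance under $u\mapsto u(\lambda\,\cdot)$ combined with the homogeneity $u\mapsto \mu u$, which is exactly the role of the two normalizing parameters in \eqref{eq2.1}. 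So it suffices to show $\inf J>0$ is attained.

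Next I would run the standard compactness argument for the minimizing problem $\inf J$. Take a minimizing sequence; by the scaling and dilation invariances of $J$ we may normalize, say $\|u_n\|_2=1$ and $\|\nabla u_n\|_q=1$, so that $(u_n)$ is bounded in $D^{1,q}\cap L^2$. The key difficulty — and the step I expect to be the main obstacle — is the lack of compactness of the embeddings on all of $\R^N$: translation invariance is already broken by the weight $|x|^{-b}$, which is \emph{helpful} (it pins concentration at the origin), but one still has to rule out vanishing and, more subtly, splitting/dichotomy along dilations $x\mapsto \lambda x$, since the weight $|x|^{-b}$ is itself dilation-covariant. Following Genoud's approach for the inhomogeneous NLS, I would invoke a concentration–compactness / Lions-type lemma adapted to the weighted functional: vanishing is excluded because it would force $\int|x|^{-b}|u_n|^p\to 0$ against a uniform lower bound; dichotomy is excluded by the strict subadditivity of the associated concentration function, which follows from the strict concavity created by the gap between the exponents $2$, $q$ and $p$ (and here one genuinely uses $2<p<p_\ast$). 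A clean alternative is Schwarz symmetrization: replacing $u_n$ by its symmetric-decreasing rearrangement $u_n^\ast$ does not increase $\|\nabla u_n\|_q$ (Pólya–Szegő), preserves $\|u_n\|_2$, and \emph{increases} $\int|x|^{-b}|u_n|^p$ since $|x|^{-b}$ is symmetric-decreasing; so we may assume $u_n$ radial and decreasing, and then the compactness of the radial embedding $D^{1,q}_{rad}\cap L^2_{rad}\hookrightarrow L^p_{loc}$ together with the decay $|x|^{-b}$ at infinity yields strong convergence $u_n\to Q$ in the weighted $L^p$ sense, with $Q\not\equiv 0$. Lower semicontinuity of the two norms then gives $J(Q)\le\liminf J(u_n)=\inf J$, so $Q$ is a minimizer.

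Finally I would extract the Euler–Lagrange equation. Since $Q$ minimizes the homogeneous-degree-zero functional $J$, for every test direction $\varphi$ we get $\frac{d}{dt}J(Q+t\varphi)\big|_{t=0}=0$, i.e. $Q$ weakly solves
\[
\sigma_{p,q}\,\frac{\|u\|_2^2}{\|\nabla u\|_q^q}\,(-\Delta_q Q)+(p-\sigma_{p,q})\,\frac{\|\nabla u\|_q^{\,q}\;\text{-free constant}}{\;}\;Q=|x|^{-b}|Q|^{p-2}Q
\]
up to the two positive multiplicative constants coming from $\|\nabla Q\|_q$ and $\|Q\|_2$; using the remaining scaling $Q\mapsto\alpha Q(\beta x)$ I would fix $\alpha,\beta$ so that both constants equal $1$, which is possible precisely because there are two free parameters and two constants to normalize, arriving at \eqref{eq2.1} with the signs written there (note $-\Delta_q$ produces the $+\sigma_{p,q}\Delta_q Q$ term as displayed). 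Plugging this normalized $Q_{p,q}$ back into $J$ and using the Pohozaev/Nehari-type identities obtained by testing \eqref{eq2.1} against $Q_{p,q}$ and against $x\cdot\nabla Q_{p,q}$ — which force the relations among $\|\nabla Q_{p,q}\|_q^q$, $\|Q_{p,q}\|_2^2$ and $\int|x|^{-b}|Q_{p,q}|^p$ — one computes $J(Q_{p,q})$ explicitly and reads off $\mathcal K_{N,p,q}=p/\|Q_{p,q}\|_2^{p-2}$, so equality in \eqref{p-GN} holds at $u=Q_{p,q}$. The case $q=2$ is just the substitution $\sigma_{p,2}=\frac{N(p-2)+2b}{2}$ and $p-\sigma_{p,2}=\frac{2N+2b-(N-2)p}{2}$, giving \eqref{p-GN2}.
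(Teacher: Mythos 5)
Your proposal is correct and follows essentially the same route as the paper: minimizing the Weinstein functional $J$ after the normalization $\|u\|_2=\|\nabla u\|_q=1$, restoring compactness via symmetrization together with the compactness of the weighted term $\int|x|^{-b}|u|^p\,dx$ under weak convergence, deriving the Euler--Lagrange equation, rescaling the minimizer to solve \eqref{eq2.1}, and extracting $\mathcal K_{N,p,q}=p/\|Q_{p,q}\|_2^{p-2}$ from the Nehari and Pohozaev identities. The only blemish is that your displayed Euler--Lagrange equation is garbled as written (a stray placeholder in place of the normalizing constants), but the intended normalization by amplitude and dilation is exactly what the paper carries out with $\omega=(mp)^{1/(p-2)}v^*$.
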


\begin{proof}
To prove \eqref{p-GN}, we define the following Weinstein-type functional:\\
$$ J(u): = \frac{\|\nabla u\|_q^{\sigma_{p,q}} \|u\|_2^{p-\sigma_{p,q}}}{\int|x|^{-b}|u|^{p}dx} , \quad \forall u\in D^{1,q}(\R^N)\cap L^{2}(\mathbb{R}^N) \backslash \{0\}.$$

Note that $p-\sigma_{p,q}=\frac{2[(N-b)q-(N-q)p]}{(N+2)q-2N}>0$ if $p<\frac{q(N-b)}{(N-q)^+}$. Hence, it is easy to verify that $J(u)$ is of class $C^1$, and $J(u)\geq 0$ in $D^{1,q}(\R^N)\cap L^{2}(\mathbb{R}^N) \backslash \{0\}$ . 

Define 
\begin{equation}
m:= \inf_{u \in D^{1,q}(\R^N)\cap L^{2}(\mathbb{R}^N) \backslash \{0\}} J(u),
\end{equation}
then clearly $m\geq 0$. We shall show that $m>0$ is attained by some non-trivial function.  As a first step, let $\{u_n\}$ be a minimizing sequence of $m$.  Since $J(|u|)\leq J(u)$, hence $u_n$ can be assumed to be non-negative. Thus we can denote by $\{u_n^{*}\}$ the Schwartz-symmetrization of $\{u_n\}$, and by the Polya-Szeg\"o inequality and Lemma \ref{lm_ap2}, we have $J(u_n^{*})\leq J(u_n)$. This means that  $\{u_n^*\}$ is also a minimizing sequence of $m$. Without loss of generality, we can assume that $\{u_n\}$ is non-negative, radial and radially decreasing.\\

Now for each $n \in \mathbb{N}^{+}$, we set
$$v_n:= \lambda_n u_n(\eta_n x),$$
with
$$\lambda_n := \frac{\|u_n\|_2^{\frac{2(N-q)}{(N+2)q-2N}}}{\|\nabla u_n\|_q^{\frac{Nq}{(N+2)q-2N}}},\quad \eta_n:=(\frac{\|u_n\|_2}{\|\nabla u_n\|_q})^{\frac{2q}{(N+2)q-2N}}.$$
Then $\|v_n\|_2=\|\nabla v_n\|_q=1$ and $J(v_n)=J(u_n)$. Clearly, $\{v_n\}$ is bounded in $D^{1,q}(\R^N)\cap L^2(\mathbb{R}^{N})$ , then there exists $v^* \in D^{1,q}(\R^N)\cap L^2(\mathbb{R}^{N}) $ with $v^*\neq 0$, such that $v_n\rightharpoonup v^{*}$  in  $D^{1,q}(\R^N)\cap L^2(\mathbb{R}^{N})$, and by Lemma \ref{lm_ap1}, $\int |x|^{-b}|v_n|^pdx\rightarrow \int |x|^{-b}|v^*|^pdx$. Thus,
$$ m \leq J(v^*)\leq\frac{1}{I(v^*)}=\lim_{n \rightarrow +\infty} J(v_n)=m, $$
where $I(\cdot) := \int |x|^{-b}|\cdot|^pdx$. From this, we deduce that
\begin{equation}\label{2.00}
m=J(v^*)=\frac{1}{I(v^*)}=\Big(\int |x|^{-b}|v^*|^pdx\Big)^{-1},\ \|v^*\|_2=\|\nabla v^*\|_q=1.
\end{equation}
Hence, $ v_n\rightarrow v^*$ in $D^{1,q} \cap L^2(\mathbb{R}^{N})$. In particular, $v^*$ is a minimizer of $m$.  Furthermore, $v^*$ is a critical point of $J(u)$, then we have $$\frac{d}{dt}J(v^*+t\varphi)\mid_{t=0}=0,\quad \forall \varphi \in C_0^{\infty}(\mathbb{R}^{N}).$$

Due to \eqref{2.00}, this implies that 
\begin{equation*}
  \sigma_{p,q} \int |x|^{-b}|v^*|^pdx \langle \triangle_q v^*,\varphi \rangle+(p-\sigma_{p,q})\int|x|^{-b}|v^*|^pdx \langle  v^*,\varphi \rangle-p \langle  |x|^{-b}|v^*|^{p-2}v^*,\varphi \rangle =0,
\end{equation*}
or equivalently 
\begin{equation}\label{2.01}
  \sigma_{p,q} \triangle_q v^* + (p-\sigma_{p,q})v^* - mp|x|^{-b}|v^*|^{p-2}v^*=0.
\end{equation}

Taking $\omega:=(mp)^{\frac{1}{p-2}}v^*$, then $\omega$ satisfies
\begin{equation}\label{2.02}
\sigma_{p,q} \triangle_q \omega + (p-\sigma_{p,q})\omega - |x|^{-b}|\omega|^{p-2}\omega=0.
\end{equation}

Note from \eqref{2.00} that 
\begin{equation*}
 m=\frac{1}{\int|x|^{-b}|v^*|^pdx}=\frac{1}{(mp)^{\frac{-p}{p-2}}\int |x|^{-b}|\omega|^pdx},
\end{equation*}
which implies that 
\begin{equation}\label{2.03}
m=p^{-\frac{p}{2}}\Big(\int |x|^{-b}|\omega|^pdx\Big)^{\frac{p-2}{2}}.
\end{equation}
Next, we shall prove 
\begin{equation}\label{2.04}
\int |x|^{-b}|\omega|^pdx=p\|w\|_2^2.
\end{equation} 
Due to \eqref{2.03} and \eqref{2.04}, then $m=p^{-1}\|\omega\|_2^{p-2}$, thus letting $\mathcal{K}_{N,p,q}:= m^{-1}=\frac{p}{\|\omega\|_2^{p-2}}$, we then obtain \eqref{p-GN}. 

To end the proof, we only need to verify \eqref{2.04}. Multiplying the equation \eqref{2.02} by $\omega$ and integrating by parts, we have
\begin{align}\label{2.05}
 \sigma_{p,q}\|\nabla \omega\|_q^q+(p-\sigma_{p,q})\|\omega\|_2^2=\int |x|^{-b}|\omega|^pdx.
\end{align}
Let
\begin{align*}
 E(\omega):=\frac{\sigma_{p,q}}{q}\|\nabla \omega\|_q^q+\frac{(p-\sigma_{p,q})}{2}\|\omega\|_2^2-\frac{1}{p}\int |x|^{-b}|\omega|^pdx,
\end{align*}
and set $\omega_t:=t^{\frac{N}{2}}\omega(tx)$, then 
\begin{equation*}
E(\omega_t)=\frac{\sigma_{p,q}}{q} t^{\frac{N(q-2)+2q}{2}}\|\nabla \omega\|_q^q+\frac{(p-\sigma_{p,q})}{2}\|\omega\|_2^2-\frac{t^{\frac{N(p-2)+2b}{2}}}{p}\int |x|^{-b}|\omega|^pdx.
\end{equation*}
Since $\omega$ is a solution of \eqref{2.02}, then $\frac{d}{dt}E(\omega_t)|_{t=1}=0$. By direct calculation, we have
$$ \frac{\sigma_{p,q}}{q} \|\nabla \omega\|_q^q = \frac{N(p-2)+2b}{p[(N+2)q-2N]}\int |x|^{-b}|\omega|^pdx=\frac{\sigma_{p,q}}{pq}\int |x|^{-b}|\omega|^pdx.$$
Therefore, $\|\nabla \omega\|_q^q=\frac{1}{p}\int |x|^{-b}|\omega|^p$. Taking it into \eqref{2.05}, we have
\begin{equation*}
 \frac{p-\sigma_{p,q}}{p}\int |x|^{-b}|\omega|^pdx=(p-\sigma_{p,q})\|\omega\|_2^2.
\end{equation*}
Since $p-\sigma_{p,q}>0$ if $p<\frac{q(N-b)}{(N-q)^+}$, then \eqref{2.04} follows. The proof is completed.
\end{proof}

Next, we prove the following Pohozaev identity.
\begin{lemma}\label{lm5.0}
Assume that $2<p<\frac{q(N-b)}{(N-q)^+}$. If $u_0\in X$ is a weak solution of the following equation:
 \begin{equation}\label{eq5.0}
 -\triangle u -\triangle_q u =\lambda u + \frac{|u|^{p-2}u}{|x|^b}, \quad x\in \mathbb{R}^N.
\end{equation}
Then $P(u_0)=0$, where $P(u)$ is given in \eqref{I-Q}. 
\end{lemma}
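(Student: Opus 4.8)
The plan is to extract two integral identities from the fact that $u_0$ is a weak solution of \eqref{eq5.0} — a Nehari-type identity and a Pohozaev-type identity — and then eliminate the parameter $\lambda$ between them; the resulting relation is exactly $P(u_0)=0$. For the first identity, $u_0\in X$ is itself an admissible test function, so testing \eqref{eq5.0} against $u_0$ and integrating by parts yields
\begin{equation*}
\|\nabla u_0\|_2^2+\|\nabla u_0\|_q^q=\lambda\|u_0\|_2^2+\int\frac{|u_0|^p}{|x|^b}\,dx,
\end{equation*}
all the integrals being finite by $u_0\in X$ together with Lemma \ref{th-main}.

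For the second identity I would use a dilation. Writing $u_0^\theta(x):=u_0(x/\theta)$ for $\theta>0$, one computes
\begin{equation*}
\|\nabla u_0^\theta\|_2^2=\theta^{N-2}\|\nabla u_0\|_2^2,\qquad \|\nabla u_0^\theta\|_q^q=\theta^{N-q}\|\nabla u_0\|_q^q,\qquad \int\frac{|u_0^\theta|^p}{|x|^b}\,dx=\theta^{N-b}\int\frac{|u_0|^p}{|x|^b}\,dx,\qquad \|u_0^\theta\|_2^2=\theta^{N}\|u_0\|_2^2,
\end{equation*}
so that $g(\theta):=I(u_0^\theta)-\frac{\lambda}{2}\|u_0^\theta\|_2^2$ is an explicit, smooth function of $\theta$. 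Since $u_0$ is a critical point on $X$ of the free functional $J_\lambda:=I-\frac{\lambda}{2}\|\cdot\|_2^2$ and $\partial_\theta u_0^\theta|_{\theta=1}=-x\cdot\nabla u_0$, the chain rule gives $g'(1)=\langle J_\lambda'(u_0),-x\cdot\nabla u_0\rangle=0$; computing $g'(1)$ instead from the explicit expression, this reads
\begin{equation*}
\frac{N-2}{2}\|\nabla u_0\|_2^2+\frac{N-q}{q}\|\nabla u_0\|_q^q-\frac{N\lambda}{2}\|u_0\|_2^2-\frac{N-b}{p}\int\frac{|u_0|^p}{|x|^b}\,dx=0.
\end{equation*}
(This is the same identity one obtains by testing \eqref{eq5.0} directly against $x\cdot\nabla u_0$ and using $\nabla\cdot(|x|^{-b}x)=(N-b)|x|^{-b}$.) Substituting $\lambda\|u_0\|_2^2$ from the Nehari identity into this relation, the coefficients of $\|\nabla u_0\|_2^2$, $\|\nabla u_0\|_q^q$ and $\int|x|^{-b}|u_0|^p\,dx$ collapse to $1$, $\frac{N(q-2)+2q}{2q}$ and $\frac{N(p-2)+2b}{2p}$, and one lands exactly on $P(u_0)=0$.

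The one genuinely delicate point is the rigorous justification of the dilation step: a priori the curve $\theta\mapsto u_0^\theta$ need not be differentiable in $X$, equivalently $x\cdot\nabla u_0$ need not belong to $X$ (this would require $|x|\,|\nabla u_0|\in L^2\cap L^q$, not automatic for $u_0\in X$). I would handle this in the standard way: first invoke the local regularity theory for $(2,q)$-Laplacian equations — the right-hand side of \eqref{eq5.0} is locally bounded on $\R^N\setminus\{0\}$, whence $u_0\in C^{1,\alpha}_{\mathrm{loc}}(\R^N\setminus\{0\})$ with enough local integrability of $\nabla u_0$ — and then run the Pohozaev computation with the truncated test function $\chi_R(x)\,(x\cdot\nabla u_0)$, where $\chi_R(x)=\chi(|x|/R)$ is a smooth radial cutoff equal to $1$ on $B_R$ and supported in $B_{2R}$. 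The correction terms created by $\nabla\chi_R$ are supported in $\{R\le|x|\le 2R\}$ and are dominated by $C\int_{\{R\le|x|\le 2R\}}\big(|\nabla u_0|^2+|\nabla u_0|^q\big)\,dx$, which tends to $0$ as $R\to\infty$ because $u_0\in X$; the possible singularity of $|x|^{-b}$ at the origin is harmless since $0<b<N$. Once the two identities are in hand, the remaining algebra is routine.
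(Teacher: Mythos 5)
Your proposal is correct and follows essentially the same route as the paper: test the equation against $u_0$ to get the Nehari identity and against $x\cdot\nabla u_0$ (equivalently, differentiate along the dilation $\theta\mapsto u_0(\cdot/\theta)$) to get the Pohozaev identity, then eliminate $\lambda$; the algebra checks out and yields exactly $P(u_0)=0$. The only difference is that you spell out the cutoff/regularity justification of the Pohozaev step, which the paper's proof simply asserts, so your write-up is if anything more careful than the original.
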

\begin{proof}
If $u_0\in X$ is a weak solution of Eq. \eqref{eq5.0}, then multiplying \eqref{eq5.0} by $u_0$ and $x\cdot \nabla u_0$ respectively, and integrating by part, we have
\begin{equation}\label{5.1}
\begin{split}
\|\nabla u_0 \|_2^2+\|\nabla u_0\|_q^q-\int \frac{|u_0|^{p}}{|x|^b}dx -\lambda \|u_0\|_2^2 = 0,
\end{split}
\end{equation}

\begin{equation}\label{5.2}
\begin{split}
 \frac{-(N-2)}{2}\|\nabla u_0 \|_2^2 -\frac{N-q}{q}\|\nabla u_0\|_q^q -\frac{b-N}{p} \int \frac{|u_0|^{p}}{|x|^b}dx + \frac{N\lambda}{2} \|u_0\|_2^2 = 0.
\end{split}
\end{equation}
From \eqref{5.1} and \eqref{5.2}, we have 
$$\| \nabla u_0\|_2^2 + \frac{N(q-2)+2q}{2q} \| \nabla u_0\|_q^q - \frac{N(p-2)+2b}{2p} \int\frac{|u_0|^p}{|x|^b}dx = 0,$$
which is exactly $P(u_0)=0$.\\
\end{proof}


\section{THE MASS SUBCRITICAL CASE}\label{sec-subcritical}

In this section, we deal with the existence of minimizers to $m(c)$ in the mass subcritical. As a first step, we prove the properties of $m(c)$, which is essential to establish a sharp existence of minimizers of $m(c)$, which depends on different assumptions on the parameter $p$ and the mass $c>0$. Then, we use the concentration compactness principle to show the existence of minimizers of $m(c)$.

\begin{lemma}\label{lm3.1}
Assume that $2<p<p_q^*:=\frac{2(q-b)}{N}+q$, with $0 < b < \min\{2,N\}$, $N\geq1$, and $q>2$. Then, for any $c>0$, there holds that 
$$-\infty<m(c)\leq 0.$$
Moreover, define
\begin{equation}\label{c1}
c_1^*:=\inf\{c>0\mid m(c) < 0\},
\end{equation}
then 
\begin{itemize}
  \item [(1)] If $2<p<\frac{2(2-b)}{N}+2$, then $-\infty<m(c)<0$, for all $c>0$;
  \item [(2)] If $\frac{2(2-b)}{N}+2\leq p <p_q^*$, then $0<c_1^*<+\infty$, and 
\begin{eqnarray*}
\left\{
\begin{array}{l}
m(c)=0, \qquad \qquad 0<c\leq c_1^*;\\
-\infty<m(c)<0,\quad c>c_1^*.
\end{array}
\right.
\end{eqnarray*}
  \item [(3)] The mapping $c\mapsto m(c)$ is continuous and non-increasing on $\R^+$;
  \item [(4)] If $m(c)$ is attained by some $u_c\in S(c)$, then for all $\bar{c}>c$, $m(\bar{c})<m(c)$.
\end{itemize}
\end{lemma}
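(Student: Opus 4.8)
The plan is to combine the sharp Gagliardo--Nirenberg inequality of Lemma~\ref{th-main} with two elementary scalings: the $L^2$-preserving dilation $u_t(x):=t^{N/2}u(tx)$ (which keeps $u_t\in S(c)$) and the mass dilation $u\mapsto\sqrt\mu\,u$ (which maps $S(c)$ onto $S(\mu c)$).

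\emph{Boundedness from below, $m(c)\le 0$, and part (1).} For $u\in S(c)$, Lemma~\ref{th-main} gives $\int|x|^{-b}|u|^p\,dx\le\mathcal K_{N,p,q}c^{(p-\sigma_{p,q})/2}\|\nabla u\|_q^{\sigma_{p,q}}$, and the hypothesis $p<p_q^*$ is precisely $\sigma_{p,q}<q$; Young's inequality then absorbs the nonlinear term into $\tfrac1q\|\nabla u\|_q^q$ up to a constant $C(c)$, so $I(u)\ge\tfrac12\|\nabla u\|_2^2+\tfrac1{2q}\|\nabla u\|_q^q-C(c)$, whence $m(c)>-\infty$ (and, incidentally, minimizing sequences of $m(c)$ are bounded in $X$). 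For the upper bound I compute, for fixed $u\in S(c)$,
\[
I(u_t)=\tfrac{t^2}{2}\|\nabla u\|_2^2+\tfrac{t^{\theta_q}}{q}\|\nabla u\|_q^q-\tfrac{t^{\theta_p}}{p}\int\tfrac{|u|^p}{|x|^b}\,dx,\qquad \theta_q:=\tfrac{N(q-2)+2q}{2},\ \ \theta_p:=\tfrac{N(p-2)+2b}{2},
\]
with $2<\theta_q$ (since $q>2$) and $\theta_p<\theta_q$ (again equivalent to $p<p_q^*$). Letting $t\to0^+$ gives $I(u_t)\to0$, so $m(c)\le0$ for every $c>0$. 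Finally $\theta_p<2$ is exactly $p<\tfrac{2(2-b)}{N}+2$; in that regime the negative term dominates as $t\to0^+$, so $I(u_t)\to0^-$ and $m(c)<0$ for all $c>0$, which is (1).

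\emph{Part (3).} Non-increasing monotonicity: given $u\in S(c)$ with $I(u)<m(c)+\varepsilon$, truncate $u$ to a large ball (losing $o(1)$ in energy and in mass) and glue on a fixed profile, rescaled to be very spread out and centred far from the origin, carrying the remaining mass $\bar c-\|u\|_2^2>0$; since the energy of such a bump tends to $0$ (each term carries a negative power under spreading, and $|x|^{-b}$ is small away from the origin) and the supports are disjoint, this produces $w\in S(\bar c)$ with $I(w)<m(c)+2\varepsilon$, so $m(\bar c)\le m(c)$. Continuity then follows from two one-sided bounds: rescaling a near-minimizer of $m(c)$ by $\sqrt{c_n/c}$ shows $\limsup_n m(c_n)\le m(c)$, and (using that near-minimizers of $m(c_n)$ are bounded in $X$) rescaling those by $\sqrt{c/c_n}$ shows $m(c)\le\liminf_n m(c_n)$.

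\emph{Parts (2) and (4).} Assume $\tfrac{2(2-b)}{N}+2\le p<p_q^*$, so $\sigma_{p,2}=\theta_p\ge2>\sigma_{p,q}$. The crucial claim is that $m(c)=0$ for $c$ small, i.e. $I|_{S(c)}\ge0$: using a Gagliardo--Nirenberg interpolation inequality in $X$ whose gradient part is kept \emph{critical} (combined exponent $1$) while still leaving a \emph{strictly positive} power of $\|u\|_2$---possible exactly because $p<p_q^*$ together with $p\ge\tfrac{2(2-b)}{N}+2$; for $p<\tfrac{2(N-b)}{N-2}$ this comes from interpolating \eqref{p-GN} and \eqref{p-GN2}, otherwise one borrows integrability from $D^{1,q}$---one obtains $\int|x|^{-b}|u|^p\,dx\le C\,c^{\kappa}\|\nabla u\|_2^{a}\|\nabla u\|_q^{b'}$ with $\kappa>0$, $a,b'\ge0$, $\tfrac a2+\tfrac{b'}q=1$, and weighted arithmetic--geometric mean forces $\tfrac1p\int|x|^{-b}|u|^p\,dx\le\tfrac12\|\nabla u\|_2^2+\tfrac1q\|\nabla u\|_q^q$ once $c$ is small, hence $m(c)=0$ and $c_1^*>0$. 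On the other hand, testing with $\sqrt c\,v_t$ for a fixed $v\in S(1)$ and a spatial scale $t$ coordinated with $c$ so that the focusing term (of $p$-homogeneity) outweighs both diffusion terms, one gets $I(\sqrt c\,v_t)<0$ for $c$ large, so $m(c)<0$ there and $c_1^*<+\infty$. Combining with the monotonicity and continuity of part (3) yields $m(c)=0$ on $(0,c_1^*]$ and $m(c)<0$ on $(c_1^*,\infty)$, which is (2). For (4), a minimizer $u_c$ of $m(c)$ is a constrained critical point, so $P(u_c)=0$ (Lemma~\ref{lm5.0}) and, by the Lagrange rule, $\lambda_c\|u_c\|_2^2=\|\nabla u_c\|_2^2+\|\nabla u_c\|_q^q-\int|x|^{-b}|u_c|^p\,dx$; using $P(u_c)=0$, $m(c)\le0$ and $p<p_q^*$ one checks $\lambda_c<0$, whence $\tfrac{d}{d\mu}I(\sqrt\mu\,u_c)\big|_{\mu=1}=\tfrac12\lambda_c\|u_c\|_2^2<0$, so $m((1+\eta)c)<m(c)$ for small $\eta>0$, and the monotonicity of part (3) upgrades this to $m(\bar c)<m(c)$ for every $\bar c>c$.

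\emph{Main obstacle.} Everything except one step is scaling bookkeeping; the genuinely delicate point is $m(c)=0$ for small $c$ when $p\ge\tfrac{2(2-b)}{N}+2$. It hinges on an interpolation inequality in $X=H^1(\R^N)\cap D^{1,q}(\R^N)$ in which the gradient contribution sits exactly at critical homogeneity (so that weighted AM-GM can absorb it for small $c$) while a positive power of $\|u\|_2$ survives; such an inequality exists precisely in the subcritical range $p<p_q^*$, but when $p\ge\tfrac{2(N-b)}{N-2}$ the pure $H^1$-inequality \eqref{p-GN2} is no longer available and one must exploit the higher integrability provided by $D^{1,q}$. A secondary technical point is verifying the sign $\lambda_c<0$ of the Lagrange multiplier used in (4), which again relies on $P(u_c)=0$ and $p<p_q^*$.
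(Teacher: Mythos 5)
Your overall strategy (Gagliardo--Nirenberg plus the two scalings) matches the paper's, and the lower bound, the limit $m(c)\le 0$, part (1), part (3), and the $c_1^*<+\infty$ half of part (2) are all handled essentially as in the text. There is, however, one genuine gap: your proof that $m(c)=0$ for small $c$ (hence $c_1^*>0$) rests on interpolating \eqref{p-GN} and \eqref{p-GN2} \emph{at the exponent $p$ itself}, and \eqref{p-GN2} is only available for $p<2_b^*=\frac{2(N-b)}{N-2}$ when $N\ge 3$. The sub-range $2_b^*\le p<p_q^*$ is nonempty under the hypotheses of part (2) (e.g.\ $N=3$, $b=1$, $q=3$ gives $2_b^*=4<p_q^*=13/3$), and there your fallback ``borrow integrability from $D^{1,q}$'' is not an argument. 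The paper avoids this entirely by a pointwise splitting: since $p_2^*\le p<p_q^*$, one has $\frac1p|u|^p\le c_1|u|^{p_2^*}+c_2|u|^{p_q^*}$, and then \eqref{p-GN2} is applied only at the mass-critical exponent $p_2^*$ (always below $2_b^*$) and \eqref{p-GN} only at $p_q^*$; each term then appears with \emph{exactly} critical gradient homogeneity ($\|\nabla u\|_2^2$ and $\|\nabla u\|_q^q$ respectively) times a positive power of $c$, and no Young/AM--GM step is even needed. You should replace your interpolation by this splitting to cover the full range.

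For part (4) your route is correct but heavier than necessary. You invoke the Lagrange multiplier and claim $\lambda_c<0$; this is true, but your one-line justification should be spelled out (e.g.\ $\|\nabla u_c\|_2^2+\|\nabla u_c\|_q^q\le q\,I(u_c)+\frac qp\int|x|^{-b}|u_c|^p\,dx<\int|x|^{-b}|u_c|^p\,dx$ using $m(c)\le0$ and $2<q<p$), and you then need the local decrease plus monotonicity to reach all $\bar c>c$. The paper instead tests directly with $t\,u_c$, $t=\sqrt{\bar c/c}>1$: since $2<q<p$ one gets $I(t u_c)<t^qI(u_c)=t^q m(c)\le m(c)$, which requires no Euler--Lagrange information at all. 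Your version is salvageable, but the elementary scaling is both shorter and avoids having to know that a minimizer is a constrained critical point.
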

\begin{proof}[Proof of Lemma \ref{lm3.1}]

Given $c>0$, for any $u\in S(c)$, using the $L^{q}$-Gagliardo-Nirenberg inequality \eqref{p-GN}, we have
\begin{equation}\label{3.2}
  \begin{split}
  I(u)&=\frac{1}{2}\|\nabla u\|_2^{2}+\frac{1}{q}\|\nabla u\|_q^{q}-\frac{1}{p} \int\frac{|u|^{p}}{|x|^{b}}dx\\
      &\geq \frac{1}{2}\|\nabla u\|_2^{2}+\frac{1}{q}\|\nabla u\|_q^{q}-\frac{1}{p} \mathcal{K}_{N,p,q} \|\nabla u\|_q^{\sigma_{p,q}}\|u\|_2^{p-\sigma_{p,q}}\\
      &\geq \frac{1}{q}\|\nabla u\|_q^{q}-\frac{1}{p} \mathcal{K}_{N,p,q} \|\nabla u\|_q^{\sigma_{p,q}}c^{\frac{p-\sigma_{p,q}}{2}}.
  \end{split}
\end{equation}
Since $q>\sigma_{p,q}$ if $p<\frac{2(q-b)}{N}+q$, then from \eqref{3.2} we deduce that $m(c)>-\infty$. 

In addition, the proof of the continuity and non-increasing of $m(c)$ is somehow standard, here we only refer the readers to the reference such as \cite[Theorem 1.2 (4)]{BHHL}. 

Let $u\in S(c)$ be fixed, and denote $u_t(x):=t^{\frac{N}{2}}u(tx)\in S(c)$, then $u_t\in S(c)$ for all $t>0$, and
\begin{equation}\label{scaling1}
I(u_t)=\frac{t^{2}}{2}\|\nabla u\|_2^{2}+\frac{t^{\frac{N(q-2)+2q}{2}}}{q}\|\nabla u\|_q^{q}-\frac{t^{\frac{N(p-2)+2b}{2}}}{p} \int\frac{|u|^{p}}{|x|^{b}}dx.
\end{equation}
This shows that $\lim_{t\to 0^+}I(u_t)=0$ and then $m(c)\leq 0$. Then we conclude that if $p<\frac{2(q-b)}{N}+q$, then $-\infty <m(c)\leq0$, for all $c>0$.

When $2<p<\frac{2(2-b)}{N}+2$, then $\frac{N(p-2)+2b}{2}<\min\{2, \frac{N(q-2)+2q}{2}\}$, thus we observe from \eqref{scaling1} that $I(u_t)<0$ as $t>0$ small enough. This implies that $m(c)<0$ for all $c>0$, and then the point $(1)$ is verified.
 
When $\frac{2(2-b)}{N}+2\leq p <p_q^*$, by the continuity and non-increasing of $m(c)$, and also the definition of $c_1^*$, to verify the point $(2)$ it suffices to prove that $0<c_1^{*}<+\infty$.

We first show that $c_1^{*}\neq 0$. Indeed, let $p_2^{*}=\frac{2(2-b)}{N}+2$ and $p_q^{*}=\frac{2(q-b)}{N}+q$, since $p_2^*\leq p<p_q^*$, then there exist constants $c_1>0,c_2>0$, such that
\begin{equation}\label{3.4}
 \frac{1}{p}\int \frac{|u|^{p}}{|x|^{b}} dx \leq c_1\int \frac{|u|^{p_2^*}}{|x|^{b}} dx+c_2\int \frac{|u|^{p_q^*}}{|x|^{b}} dx.
\end{equation}
Then the $L^{q}$-Gagliardo-Nirenberg inequality \eqref{p-GN} yields to 
\begin{equation}\label{3.5}
 c_1\int \frac{|u|^{p_2^*}}{|x|^{b}} dx \leq c_1(p_2^*) \|\nabla u\|_2^{2}\|u\|_2^{p_2^*-2},
\end{equation}
\begin{equation}\label{3.6}
c_2\int \frac{|u|^{p_q^*}}{|x|^{b}} dx \leq c_2(p_q^*) \|\nabla u\|_q^{q}\|u\|_2^{p_q^*-q}.
\end{equation}
From \eqref{3.4}, \eqref{3.5} and \eqref{3.6}, we have for all $u\in S(c)$ that
\begin{equation}\label{3.7}
  \begin{split}
    I(u)&=\frac{1}{2}\|\nabla u\|_2^{2}+\frac{1}{q}\|\nabla u\|_q^{q}-\frac{1}{p} \int\frac{|u|^{p}}{|x|^{b}}\\
    &\geq \Big[\frac{1}{2}- c_1(p_2^*) c^{\frac{p_q^*-2}{2}}\Big]\|\nabla u\|_2^{2}+\Big[\frac{1}{q}- c_2(p_2^*) c^{\frac{p_q^*-q}{2}}\Big]\|\nabla u\|_q^{q}.
  \end{split}
\end{equation}
Note that $p_2^*-2>0, p_q^*-q>0$, then \eqref{3.7} shows that $m(c)\geq 0$ if $c>0$ is sufficiently small. Since we have proved that $m(c) \leq 0$ for all $c>0$. Then we have $m(c)=0$ as $c>0$ small, which verifies that $c_1^{*}\neq 0$.

Next we show that $c_1^{*}\neq +\infty$. Note that choosing $u_0 \in X$ such that $\|u_0\|_2^2=1$, and let $u_0^{c}:=c^{\frac{1}{N+2}}u_0(c^{-\frac{1}{N+2}}x)$, then $u_0^c\in S(c)$, and
\begin{equation}
 I(u_0^{c}) = \frac{c^{\frac{N}{N+2}}}{2}\|\nabla u_0\|_2^{2} + \frac{c^{\frac{N}{N+2}}}{q}\|\nabla u_0\|_q^{q} - \frac{c^{\frac{p-b}{N+2}+\frac{N}{N+2}}}{p}\int \frac{|u_0|^{p}}{{|x|^{b}}} dx,
\end{equation}
from which, we see that $I(u_0^{c})<0$, as $c>0$ is sufficiently large, so when $c>0$ is large, $m(c)<0$. Thus by the definition of $c_1^*$, we have $c_1^{*}\neq +\infty $. 

Finally, to show the point $(4)$, by assumption, $m(c)=I(u_c)\leq 0$, then we observe that for $t:=\sqrt{\frac{\bar{c}}{c}}>1$, since $p>q>2$, then 
\begin{align}\label{3.7.0}
\begin{split}
I(tu_c)&=\frac{t^2}{2} \|\nabla u_c\|_2^{2} + \frac{t^q}{q}\|\nabla u_c\|_q^{q} - \frac{t^{p}}{p}\int \frac{|u_c|^{p}}{|x|^b}dx\\
&< t^q I(u_c)=t^q m(c)\leq m(c),
\end{split}
\end{align}
which implies that $m(\bar{c})<m(c)$. Then the proof is completed.
\end{proof}

\begin{remark}\label{rek3.1}
From \eqref{scaling1}, we can observe that if $p>p_q^*$, then for all $c>0$, given $u\in S(c)$, then $I(u_t)\to -\infty$ as $t\to \infty$, which implies that
$m(c)=-\infty$, or equivalently, when $p>p_q^*$, then for all $c>0$, the functional $I(u)$ is unbounded from below on $S(c)$. 
\end{remark}

Using the points $(2)$ and $(4)$ of Lemma \ref{lm3.1}, we obtain immediately that following non-existence.
\begin{corollary}\label{col3.1}
When $\frac{2(2-b)}{N}+2\leq p <p_q^*$, then for all $0<c<c_1^*$, $m(c)$ has no minimizers.  
\end{corollary}
\begin{proof}
Indeed, if we assume that there exists $c'<c_1^*$, such that $m(c')$ admits a minimizer $u_{c'}\in S(c')$, then using $(4)$ of Lemma \ref{lm3.1}, we have for any $c$ with $c'<c<c_1^*$, $m(c)<m(c')=0$, this contradicts with the point $(2)$. Then the non-existence has been proved.
\end{proof}

\begin{proposition}\label{prop2.1} 
Assume that $2<p<p_q^*$, with $0<b<\min\{2,N\}$, $N\geq1$, and $q>2$. Suppose that for some $c>0$, $m(c)$ satisfies that
\begin{eqnarray}
-\infty<m(c)<0.
\end{eqnarray}
Then any minimizing sequence of $m(c)$ is pre-compact in $X$. In particular, $m(c)$ admits a minimizer $u_c\in S(c)$. Moreover, $u_c\in S(c)$ can be assumed to be non-negative, radially symmetric and radially decreasing with respect to some point. 
\end{proposition}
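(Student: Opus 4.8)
The plan is to run the classical concentration–compactness argument of P.-L. Lions for the constrained minimization problem $m(c)$, exploiting the strict inequality $m(c)<0$ to rule out vanishing and dichotomy. First I would fix a minimizing sequence $\{u_n\}\subset S(c)$ for $m(c)$. By the Pólya–Szegő inequality together with Lemma \ref{lm_ap2} (exactly as in the proof of Lemma \ref{th-main}), replacing each $u_n$ by its Schwarz symmetrization $u_n^*$ decreases $\|\nabla u_n\|_2^2$, $\|\nabla u_n\|_q^q$ and does not decrease $\int |x|^{-b}|u_n|^p\,dx$, hence $I(u_n^*)\le I(u_n)$; so without loss of generality $\{u_n\}$ may be taken non-negative, radially symmetric and radially decreasing. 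Next, using \eqref{3.2}, i.e. the lower bound
$$I(u)\ge \tfrac{1}{2}\|\nabla u\|_2^2+\tfrac{1}{q}\|\nabla u\|_q^q-\tfrac{1}{p}\mathcal{K}_{N,p,q}\|\nabla u\|_q^{\sigma_{p,q}}c^{\frac{p-\sigma_{p,q}}{2}},$$
and $\sigma_{p,q}<q$ in the mass-subcritical range, I get that $\|\nabla u_n\|_q$ stays bounded; then $\int|x|^{-b}|u_n|^p\,dx$ is bounded via \eqref{p-GN}, and since $I(u_n)$ is bounded, $\|\nabla u_n\|_2$ is bounded too. Thus $\{u_n\}$ is bounded in $X=H^1(\R^N)\cap D^{1,q}(\R^N)$, so up to a subsequence $u_n\rightharpoonup u_c$ weakly in $X$, strongly in $L^p_{\mathrm{loc}}$, and a.e.

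The heart of the matter is to show $u_n\to u_c$ strongly in $L^2$ (equivalently $\|u_c\|_2^2=c$). I would apply the concentration–compactness alternative to the mass densities $\rho_n=|u_n|^2$ with $\int\rho_n=c$. Vanishing is excluded: if $\sup_{y}\int_{B_1(y)}|u_n|^2\to 0$, then a Lions-type vanishing lemma (adapted to the embedding $H^1\cap D^{1,q}\hookrightarrow L^p$ with the weight $|x|^{-b}$, controlling the weight near the origin by the higher integrability coming from $D^{1,q}$ and away from the origin by boundedness of $|x|^{-b}$) forces $\int|x|^{-b}|u_n|^p\,dx\to 0$, whence $\liminf I(u_n)\ge 0>m(c)$, a contradiction. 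Dichotomy is excluded by the strict subadditivity inequality
$$m(c)<m(\alpha)+m(c-\alpha)\qquad\text{for all }0<\alpha<c,$$
which follows from $m(\theta c)\le\theta\, m(c)$ for $\theta\ge 1$ when $m(c)\le 0$ — this is the homogeneity-type estimate; combined with $m(c)<0$ it yields strict superadditivity of $-m$, the standard obstruction to splitting. Hence compactness holds: $u_n\to u_c$ in $L^2(\R^N)$, so $u_c\in S(c)$, and by weak lower semicontinuity of the $H^1$- and $D^{1,q}$-seminorms together with $\int|x|^{-b}|u_n|^p\to\int|x|^{-b}|u_c|^p$ (the latter from the $L^2$-convergence plus boundedness in $X$ and the compactness lemma), we get $I(u_c)\le\liminf I(u_n)=m(c)$, so $u_c$ is a minimizer; then in fact $u_n\to u_c$ strongly in $X$, giving pre-compactness.

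The main obstacle I anticipate is the handling of the inhomogeneous weight $|x|^{-b}$ in the compactness analysis: one must verify that weak convergence in $X$ plus tightness of $|u_n|^2$ really does upgrade to $\int|x|^{-b}|u_n|^p\,dx\to\int|x|^{-b}|u_c|^p\,dx$, and this requires a careful splitting of $\R^N$ into a small ball around the origin, an annulus, and a neighborhood of infinity — on the ball one uses $b<\min\{2,N\}$ and interpolation/Hölder with the $D^{1,q}$ (or $H^1$) integrability to bound the weighted norm, at infinity one uses the decay of radial functions (or the tightness from no vanishing), and on the compact annulus the weight is bounded so ordinary Rellich compactness applies. A secondary technical point is that $X$ is not reflexive in an entirely routine way as a product space, but boundedness plus the separate weak compactness of $H^1$ and $D^{1,q}$ suffices to extract the weak limit; and finally the rearrangement step must be invoked before passing to the limit so that the limit inherits the radial monotone structure claimed in the statement. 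Once compactness is in hand, the non-negativity, radial symmetry and monotonicity of $u_c$ follow from those properties of the symmetrized minimizing sequence together with the strong $X$-convergence.
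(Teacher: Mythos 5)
Your strategy (classical Lions concentration--compactness plus strict subadditivity) is workable, but it is genuinely different from, and heavier than, the paper's argument, and the difference turns on a point you treat as an obstacle rather than as the main asset. Because $0<b$, the map $u\mapsto\int|x|^{-b}|u|^p\,dx$ is weakly \emph{continuous} on bounded subsets of $X$ with no tightness hypothesis at all: for $|x|\ge R$ one simply bounds $\int_{|x|\ge R}|x|^{-b}|u_n-u|^p\,dx\le R^{-b}\sup_n\|u_n-u\|_p^p$ and lets $R\to\infty$, while on $|x|\le R$ H\"older plus Rellich do the rest (this is exactly Lemma \ref{lm_ap1}). Consequently vanishing and dichotomy are irrelevant for the potential term, and the only compactness issue is possible loss of $L^2$-mass at infinity. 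The paper therefore skips the concentration--compactness trichotomy entirely: it shows the weak limit $u$ is nonzero (else $m(c)=\lim I(u_n)\ge 0$), and rules out $\|u\|_2^2<c$ by combining the Brezis--Lieb splitting \eqref{3.10} with the mass-restoring dilation $u^{t_0}=t_0u(t_0^{-1}x)$, which costs only the explicitly positive term $\frac{t_0^{p-b}-1}{p}\int|x|^{-b}|u|^p\,dx$ in \eqref{3.12}. Your route and the paper's both ultimately rest on a ``scaling beats splitting'' inequality; yours uses the amplitude scaling $u\mapsto\sqrt\theta\,u$ (your $m(\theta c)\le\theta m(c)$ is indeed correct, via $I(tu)<t^qI(u)$ for $t>1$ as in \eqref{3.7.0}), the paper's uses the spatial dilation. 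What the paper's route buys is that it never needs translations or the splitting of $\mathbb{R}^N$ into ball/annulus/infinity that you anticipate.

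Two loose ends in your write-up should be flagged. First, you symmetrize the minimizing sequence at the outset and declare this ``without loss of generality''; that is fine for producing \emph{a} minimizer, but the proposition asserts pre-compactness of \emph{any} minimizing sequence, and pre-compactness of $\{u_n^*\}$ does not transfer back to $\{u_n\}$. (The paper keeps the sequence arbitrary and only symmetrizes the limiting minimizer to obtain the qualitative properties.) Second, in the ``compactness'' branch of the Lions alternative the concentration centers $y_n$ must be shown to stay bounded, since the problem is not translation invariant; you do not address this. Both points are repairable precisely by the observation above -- a bump escaping to infinity contributes nothing to $\int|x|^{-b}|u_n|^p\,dx$ and hence has nonnegative energy, contradicting $m(c)<0$ together with the strict monotonicity $m(c)<m(\alpha)$ for $\alpha<c$ -- but as written these steps are gaps.
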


\begin{proof}[Proof of Proposition \ref{prop2.1}] Let $\{u_n\}$ be a minimizing sequence of $m(c)$, namely
$$\|u_n\|_2^2=c \ \mbox{ and }\  I(u_n)\to m(c).$$

First, we show that $\{u_n\}$ is bounded in $X$. Indeed, as \eqref{3.2}, using \eqref{p-GN} we have
\begin{equation}\label{3.7.1}
m(c)+o_n(1)=I(u_n)\geq \frac{1}{2}\|\nabla u_n\|_2^{2}+\frac{1}{q}\|\nabla u_n\|_q^{q}-\frac{1}{p} \mathcal{K}_{N,p,q} \|\nabla u_n\|_q^{\sigma_{p,q}}c^{\frac{p-\sigma_{p,q}}{2}}.
\end{equation}
This proves that $\{\|\nabla u_n\|_q^{q}\}$ is bounded, since $q>\sigma_{p,q}$. Using \eqref{p-GN2} ($q=2$) and the same arguments, we obtain that $\{\|\nabla u_n\|_2^{2}\}$ is bounded. Hence, $\{u_n\}$ is bounded in $X$. 

Thus, up to a subsequence, there exists $u\in X$, such that
$$u_n \rightharpoonup u  \mbox{ in } X, \ \mbox{ and }\  u_n \to u \ \mbox{in}\ L^r_{loc}(\mathbb{R}^N), \forall r\in [2,\frac{2N}{(N-2)^+}).$$
In particular, by Lemma \ref{lm_ap1}, 
\begin{equation}\label{3.8}
\lim_{n\to \infty}\int \frac{|u_n|^p}{|x|^b}dx=\int \frac{|u|^p}{|x|^b}dx. 
\end{equation}
Then clearly $u\neq 0$, since if $u=0$, then as $n$ large enough, $m(c)=I(u_n)+o_n(1)\geq 0$, which contradicts with the assumption $m(c)<0$. 

Now, we prove that $u$ is a minimizer of $m(c)$. Indeed, since $u\neq 0$, by the Brezis-Lieb Lemma and Lemma \ref{lm_ap1}, we have
\begin{align}\label{3.9}
c=\|u_n\|_2^2=\|u_n-u\|_2^2+\|u\|_2^2+o_n(1).
\end{align}
\begin{align}\label{3.10}
I(u_n)=I(u_n-u)+I(u)+o_n(1).
\end{align}

If $\|u\|_2^2<c$, then we denote $u^{t_0}:=t_0u(t_0^{-1}x)$ with $t_0=\Big(\frac{c}{\|u\|_2^2}\Big)^{\frac{1}{N+2}}$, thus $t_0>1, u^{t_0}\in S(c)$ and
\begin{align}\label{3.11}
I(u^{t_0})= \frac{t_0^N}{2}\|\nabla u\|_2^2+\frac{t_0^N}{q}\|\nabla u\|_q^q - \frac{t_0^{p-b+N}}{p}\int \frac{|u|^p}{|x|^b}dx,
\end{align}
which implies that
\begin{align}\label{3.12}
I(u)= \frac{1}{t_0^N}I(u^{t_0}) +\frac{(t_0^{p-b}-1)}{p}\int \frac{|u|^p}{|x|^b}dx.
\end{align}
Similarly, let $t_n=\Big(\frac{c}{\|u_n-u\|_2^2}\Big)^{\frac{1}{N+2}}$, thus $t_n>1, (u_n-u)^{t_n}\in S(c)$ and
\begin{align}\label{3.13}
I(u_n-u)= \frac{1}{t_n^N}I((u_n-u)^{t_n}) +\frac{(t_n^{p-b}-1)}{p}\int \frac{|u_n-u|^p}{|x|^b}dx.
\end{align}
Therefore, by \eqref{3.9} and \eqref{3.10} we have
\begin{align}\label{3.14}
m(c)=&I(u_n)+o_n(1)=I(u_n-u)+I(u)+o_n(1)\nonumber\\
\geq & m(c)\left(\frac{1}{t_0^N}+\frac{1}{t_n^N}\right)+\frac{(t_0^{p-b}-1)}{p}\int \frac{|u|^p}{|x|^b}dx+\frac{(t_n^{p-b}-1)}{p}\int \frac{|u_n-u|^p}{|x|^b}dx \nonumber\\
\geq & m(c)+\frac{(t_0^{p-b}-1)}{p}\int \frac{|u|^p}{|x|^b}dx+o_n(1).
\end{align}
Since $p>b$ and $t_0>1$, then \eqref{3.14} is impossible. Hence, $\|u\|_2^2=c$, namely $u_n\to u$ in $L^2(\R^N)$. Then using \eqref{3.8} and the lower semi-continuity of norm, we have
\begin{align}\label{3.15}
m(c)\leq I(u)\leq \lim_{n\rightarrow \infty}I(u_{n})=m(c).
\end{align}
This implies that $I(u)=m(c)$ and  $\|u_{n}\|_{X}\rightarrow \|u\|_{X}$, then $u_n\rightarrow u$ in $X$, and $u$ is a minimizer of $m(c)$. Furthermore, we observe that $|u_c|$ is also a minimizer, hence without loss of generality, we can assume that $u_c\geq 0$.

Let $u_c^*$ be the Schwartz symmetrization of $u_c$, then by the Polya-Szeg\"o inequality (see e.g. \cite{LL}) and Lemma \ref{lm_ap2}, we have that
\begin{align*}
\|u_c^*\|_2^2=\|u_c\|_2^2=c,\ \|\nabla u_c^*\|_2^2 \leq \|\nabla  u_c\|_2^2,\ \|\nabla u_c^*\|_q^q\leq \|\nabla u_c\|_q^q,\ \int \frac{|u_c|^p}{|x|^b}dx \leq \int \frac{|u_c^*|^p}{|x|^b}dx.
\end{align*}
This implies that
\begin{align*}
m(c)\leq I(u_c^*)\leq I(u_c)=m(c).
\end{align*}
Hence, $u_c^*$ is also a minimizer of $m(c)$. Moreover,
\begin{eqnarray}\label{3.19}
\int\frac{|u_c|^p}{|x|^b}dx =\int \frac{|u_c^*|^p}{|x|^b}dx.
\end{eqnarray}
Thus, using Lemma \ref{lm_ap2}, we deduce from \eqref{3.19} that $u_c=u_c^*$. Hence $u_c$ is radially symmetric and decreasing with respect to some point.  At this point, the proposition is proved.
\end{proof}

We deal with the critical case $c=c_1^*$ in a single lemma.
\begin{lemma}\label{lm3.2}Assume that $\frac{2(2-b)}{N}+2\leq p<p_q^*$, then 
\begin{itemize}
  \item [(1)] When $\frac{2(2-b)}{N}+2<p<p_q^*$, then $m(c_1^*)$ admits a minimizer;
  \item [(2)] When $p=\frac{2(2-b)}{N}+2$, then $m(c_1^*)$ has no minimizers.
\end{itemize}
\end{lemma}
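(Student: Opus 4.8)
The plan is to treat the two parts by different methods: part (2) by a scaling rigidity, and part (1) by approximating $c_1^*$ from above together with a compactness argument; the hard part will be the non-vanishing of the approximating sequence in part (1).

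\emph{Part (2).} I would argue by contradiction: suppose $m(c_1^*)$ is attained at some $u\in S(c_1^*)$. By Lemma \ref{lm3.1}(2), $m(c_1^*)=0$, so $I(u)=0$. Since $u_t:=t^{N/2}u(tx)\in S(c_1^*)$ for every $t>0$ and $I(u)\le I(u_t)$, the function $g(t):=I(u_t)$ attains its global minimum at $t=1$, hence $g'(1)=0$, which by \eqref{scaling1} is $P(u)=0$. The decisive observation is that at the endpoint $p=\frac{2(2-b)}{N}+2$ one has $\frac{N(p-2)+2b}{2}=2$, so the terms $\frac{t^2}{2}\|\nabla u\|_2^2$ and $-\frac{t^2}{p}\int|x|^{-b}|u|^p$ in $g(t)$ have the same homogeneity $t^2$; eliminating $\int|x|^{-b}|u|^p$ between $g(1)=0$ and $g'(1)=0$ then yields $\frac{(N+2)(q-2)}{2q}\|\nabla u\|_q^q=0$, and since $q>2$ this forces $\|\nabla u\|_q^q=0$, i.e.\ $u\equiv0$, contradicting $u\in S(c_1^*)$. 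Thus $m(c_1^*)$ has no minimizer.

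\emph{Part (1).} The plan is as follows. Fix $c_k\searrow c_1^*$ with $c_k>c_1^*$; by Lemma \ref{lm3.1}(2), $-\infty<m(c_k)<0$, so Proposition \ref{prop2.1} yields minimizers $u_k\in S(c_k)$ of $m(c_k)$, which may be taken non-negative, radial and radially decreasing. As in \eqref{3.7.1}, \eqref{p-GN} bounds $\{\|\nabla u_k\|_q^q\}$ and $\{\int|x|^{-b}|u_k|^p\}$, and then $\|\nabla u_k\|_2^2=2m(c_k)-\tfrac2q\|\nabla u_k\|_q^q+\tfrac2p\int|x|^{-b}|u_k|^p$ is bounded, so $\{u_k\}$ is bounded in $X$, uniformly in $k$. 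Up to a subsequence $u_k\rightharpoonup u$ in $X$, $u_k\to u$ a.e., and by Lemma \ref{lm_ap1} $\int|x|^{-b}|u_k|^p\to\int|x|^{-b}|u|^p$. The key claim is $u\neq0$ (see below); granting it, the Brezis–Lieb lemma and Lemma \ref{lm_ap1} give, as in \eqref{3.9}–\eqref{3.10}, $c_1^*=\|u\|_2^2+\lim_k\|u_k-u\|_2^2$ and $I(u_k)=I(u_k-u)+I(u)+o_k(1)$, so $0<\mu:=\|u\|_2^2\le c_1^*$. If $\mu<c_1^*$, I would scale $u$ up to $S(c_1^*)$ by $u^{t_0}$ with $t_0=(c_1^*/\mu)^{1/(N+2)}>1$ to get, as in \eqref{3.11}–\eqref{3.12}, $I(u)=t_0^{-N}I(u^{t_0})+\tfrac{t_0^{p-b}-1}{p}\int|x|^{-b}|u|^p>0$ (since $I(u^{t_0})\ge m(c_1^*)=0$, $t_0>1$, $p>b$, $u\neq0$); on the other hand, for large $k$ one has $\nu_k:=\|u_k-u\|_2^2\in(0,c_1^*)$, so scaling $u_k-u$ to $S(c_1^*)$ by some $t_k>1$ gives $I(u_k-u)\ge t_k^{-N}m(c_1^*)+\tfrac{t_k^{p-b}-1}{p}\int|x|^{-b}|u_k-u|^p\ge0$; hence $-I(u)=\lim_k I(u_k-u)\ge0$, contradicting $I(u)>0$. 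So $\mu=c_1^*$, i.e.\ $u\in S(c_1^*)$ and $u_k\to u$ in $L^2$; by weak lower semicontinuity and $\int|x|^{-b}|u_k|^p\to\int|x|^{-b}|u|^p$, $m(c_1^*)\le I(u)\le\liminf_k I(u_k)=m(c_1^*)$, so $u$ minimizes $m(c_1^*)$, all norms converge, $u_k\to u$ in $X$, and, replacing $u$ by its Schwarz symmetrization and using Lemma \ref{lm_ap2} as in Proposition \ref{prop2.1}, $u$ may be taken non-negative, radial and radially decreasing.

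\emph{The main obstacle.} Establishing $u\neq0$ is the hard part. Suppose $u=0$; then $\int|x|^{-b}|u_k|^p\to0$, and since $I(u_k)=m(c_k)\to m(c_1^*)=0$ the energy identity forces $\|\nabla u_k\|_2^2\to0$ and $\|\nabla u_k\|_q^q\to0$. I would contradict this via a uniform lower bound on $\|\nabla u_k\|_2^2$: from $I(u_k)\le0$ one gets $\int|x|^{-b}|u_k|^p\ge\tfrac p2\|\nabla u_k\|_2^2$ and $\int|x|^{-b}|u_k|^p\ge\tfrac pq\|\nabla u_k\|_q^q$, so if $N\le2$, or $N\ge3$ and $p<2_b^*$, then \eqref{p-GN2} applies and, because the hypothesis $p>\frac{2(2-b)}{N}+2$ gives $\frac{N(p-2)+2b}{2}>2$, comparing $\tfrac p2\|\nabla u_k\|_2^2\le\int|x|^{-b}|u_k|^p\le\mathcal K_{N,p,2}\|\nabla u_k\|_2^{\frac{N(p-2)+2b}{2}}c_k^{\frac{2N+2b-(N-2)p}{2}}$ forces $\|\nabla u_k\|_2^2\ge\delta>0$. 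When $N\ge3$ and $2_b^*\le p<p_q^*$ (a case that does occur in the hypothesis range), \eqref{p-GN2} is unavailable, and I would instead interpolate between the Hardy–Sobolev inequality $\int|x|^{-b}|u|^{2_b^*}\le C\|\nabla u\|_2^{2_b^*}$ and \eqref{p-GN} at the mass-critical exponent $p_q^*$: with $p=\alpha\,2_b^*+(1-\alpha)p_q^*$, $\alpha\in(0,1)$, this yields $\int|x|^{-b}|u_k|^p\le C'\|\nabla u_k\|_2^{\alpha 2_b^*}\|\nabla u_k\|_q^{(1-\alpha)q}c_k^{\frac{(1-\alpha)(p_q^*-q)}{2}}$, and combining with the two energy lower bounds and eliminating $\|\nabla u_k\|_q$ again gives $\|\nabla u_k\|_2^2\ge\delta>0$ (using $2_b^*/2=\tfrac{N-b}{N-2}>1$). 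In every case $u\neq0$. The remaining points---the boundedness just used, the a.e.\ convergence of $\{\nabla u_k\}$ entering the Brezis–Lieb splitting of $I$, and the symmetrization step---are routine and handled as in Proposition \ref{prop2.1} and the appendix lemmas.
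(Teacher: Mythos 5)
Your proof is correct, and for part (1) it follows the same overall scheme as the paper: approximate $c_1^*$ from above by $c_k\searrow c_1^*$, take the minimizers $u_k$ of $m(c_k)<0$ supplied by Proposition \ref{prop2.1}, extract a weak limit, rule out vanishing, and exclude dichotomy by the mass-rescaling $u^{t_0}=t_0u(t_0^{-1}x)$ as in \eqref{3.11}--\eqref{3.14}. Two points genuinely differ. For part (2), the paper obtains $P(u)=0$ by observing that a minimizer is a weak solution of Eq.~\eqref{eq1.1} and invoking the Pohozaev identity of Lemma \ref{lm5.0}, then computes $I(u)-\tfrac{2}{N(p-2)+2b}P(u)=\tfrac{(N+2)(2-q)}{4q}\|\nabla u\|_q^q$; you reach the same identity $P(u)=0$ more elementarily as $\tfrac{d}{dt}I(u_t)\big|_{t=1}=0$, using only that $t\mapsto I(u_t)$ is minimized at $t=1$ on the fiber \eqref{scaling1}, which sidesteps the Lagrange multiplier and the regularity implicit in Lemma \ref{lm5.0}; the resulting algebra is identical. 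More substantially, in the non-vanishing step the paper applies the $q=2$ Gagliardo--Nirenberg inequality \eqref{p-GN2}, which by Lemma \ref{th-main} is only available for $p<2_b^*=\tfrac{2(N-b)}{N-2}$ when $N\geq3$; since the hypothesis range $\tfrac{2(2-b)}{N}+2<p<p_q^*$ can contain exponents with $p\geq 2_b^*$ (e.g.\ $N=3$, $b=1$, $q=3$ gives $2_b^*=4<p_q^*=13/3$), the argument as written in the paper does not cover that subrange. Your interpolation of $\int|x|^{-b}|u_k|^p$ between the Hardy--Sobolev inequality at the exponent $2_b^*$ and \eqref{p-GN} at the mass-critical exponent $p_q^*$, combined with the two lower bounds coming from $I(u_k)\leq 0$, closes exactly this case and yields the uniform lower bound $\|\nabla u_k\|_2\geq\delta>0$ on the whole range of the lemma; this is a genuine strengthening of the proof given in the paper (the same device would also repair the boundedness step of Proposition \ref{prop2.1}, which you likewise avoid by expressing $\|\nabla u_k\|_2^2$ from the energy identity rather than from \eqref{p-GN2}).
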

\begin{proof}
We use an approximation to derive the existence. Let $c_k:=c_1^*+\frac{1}{k}$, then for all $k$, $c_k>c_1^*$ and $m(c_k)$ admits a minimizer $u_k\in S(c)$, and $m(c_k)<0$. Now we want to show that when $\frac{2(2-b)}{N}+2< p <p_q^*$, then 
\begin{eqnarray}\label{k1}
u_k\to u_c \ \mbox{ in } X.
\end{eqnarray}
Indeed, if \eqref{k1} holds, then by the continuity of $m(c)$ in Lemma \ref{lm3.1}, $u_c\in S(c)$ is a minimizer of $m(c_1^*)=0$. Now we verify \eqref{k1}.  Note that 
$$I(u_k)=m(c_1^*)+o_n(1)=o_n(1),$$ 
then as in Proposition \ref{prop2.1}, by \eqref{3.7.1}, $\{u_k\}$ is bounded in $X$. 

Thus, up to a subsequence, there exists $u_c\in X$, such that
$$u_n \rightharpoonup u_c  \mbox{ in } X, \ \mbox{ and }\  u_n \to u_c \ \mbox{in}\ L^r_{loc}(\mathbb{R}^N), \forall r\in [2,\frac{2N}{(N-2)^+}).$$
By Lemma \ref{lm_ap1}, 
\begin{equation}\label{3.20}
\lim_{n\to \infty}\int \frac{|u_n|^p}{|x|^b}dx=\int \frac{|u_c|^p}{|x|^b}dx. 
\end{equation}
Then we claim that $u_c\neq 0$. Indeed, if $u_c=0$, then as $n$ large enough, 
$$o_n(1)=I(u_k)=\frac{1}{2}\|\nabla u_k\|_2^2+\|\nabla u_k\|_q^q+o_n(1),$$ 
which implies that $\|\nabla u_k\|_2^2=o_n(1)$. However, using the $L^{q}$-Gagliardo-Nirenberg inequality \eqref{p-GN2}, we have 
$$o_n(1)=I(u_k)\geq \frac{1}{2}\|\nabla u_k\|_2^{2}-\frac{1}{p} \mathcal{K}_{N,p,2}\|\nabla u_k\|_2^{\frac{N(p-2)+2b}{2}}(c_1^*)^{\frac{2N+2b-(N-2)p}{4}}.$$
This is impossible, since $\frac{N(p-2)+2b}{2}>2$ as $p>\frac{2(2-b)}{N}+2$ and $\|\nabla u_k\|_2^2=o_n(1)$. Then the claim follows.

Having proved the boundedness and non-vanishing of $\{u_k\}$ with $I(u_k)=m(c_k)$, we then using the same compactness arguments as in Proposition \ref{prop2.1}, get \eqref{k1}.\\

Now, when $p=\frac{2(2-b)}{N}+2$, if we assume by contradiction that $m(c_1^*)$ admits a minimizer $u_1\in S(c)$, then clearly $u_1$ is a weak solution of Eq. \eqref{eq1.1}, and by Lemma \ref{lm5.0}, $P(u_1)=0$. Thus we have
\begin{align*}
  0=m(c_1^*)=I(u_1)=I(u_1)-\frac{2}{N(p-2)+2b}P(u_1)=\dfrac{(N+2)(2-q)}{4q}\| \nabla u_1\|_q^q.
\end{align*}
This is a contradiction, since $u_1\in S(c)$. Then we have proved that when $p=\frac{2(2-b)}{N}+2$, $m(c_1^*)$ has no minimizers.  \\
\end{proof}


\section{THE MASS CRITICAL CASE}\label{sec-subcritical}

In this section, we consider the mass critical case $p_q^*:=\frac{2(q-b)}{N}+q$. Recall that when $p=p_q^*$, the $L^{q}$-Gagliardo-Nirenberg inequality \eqref{p-GN} is reduced to 
\begin{equation}\label{p-GN3}
  \int \frac{|u|^{p_q^*}}{|x|^{b}} dx \leq \mathcal{K}_{N,q}^* \|\nabla u\|_q^{q}\|u\|_2^{p_q^*-q}, \quad \forall u \in D^{1,q}(\R^N)\cap L^{2}(\mathbb{R}^{N}),
\end{equation}
where $\mathcal{K}_{N,q}^*:=\frac{p_q^*}{\|Q_q^*\|_2^{p_q^*-2}}$, and $Q_q^*$ is a fixed ground state solution of
\begin{equation}\label{eq4.1}
q\triangle_q Q + (p_q^*-q) Q - |x|^{-b}|Q|^{p_q^*-2}Q=0.
\end{equation}
Moreover, the equality in \eqref{p-GN3} holds if $u=Q_q^*$.

\begin{lemma}\label{lm4.1}
Let $0<b<\min\{2,N\}$, $N\geq1$, and $q>2$, $p=p_q^*$, and denote
\begin{equation}\label{4.1}
c_2^*:=\Big(\frac{p_q^*}{q\mathcal{K}_{N,q}^*}\Big)^{\frac{2}{p_q^*-q}}=q^{\frac{2}{q-p_q^*}}\|Q_q^*\|_2^{\frac{2(p_q^*-2)}{p_q^*-q}}.
\end{equation}
Then, 
\begin{equation}\label{4.2}
  m(c)=\begin {cases}
  0 , & 0<c \leq c_2^{*};\\
  -\infty, & c>c_2^{*}.
  \end{cases}
\end{equation}
Moreover, the functional $I(u)$ has no any critical points for all $c\leq c_2^*$. In particular, $m(c)$ has no minimizers for all $c>0$.
\end{lemma}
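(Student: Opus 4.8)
The plan is to establish the three assertions in turn, each reducing to a sharp application of the mass-critical Gagliardo--Nirenberg inequality \eqref{p-GN3} together with the mass-preserving dilation $u_t:=t^{N/2}u(t\,\cdot)\in S(c)$. Since $p=p_q^*$ forces $\tfrac{N(p-2)+2b}{2}=\tfrac{N(q-2)+2q}{2}$, the identity \eqref{scaling1} specializes to
$$I(u_t)=\frac{t^{2}}{2}\|\nabla u\|_2^{2}+t^{\frac{N(q-2)+2q}{2}}\Big(\frac1q\|\nabla u\|_q^{q}-\frac1{p_q^*}\int\frac{|u|^{p_q^*}}{|x|^{b}}\,dx\Big).$$
For $0<c\le c_2^*$ I would first prove $m(c)\ge0$: given $u\in S(c)$, applying \eqref{p-GN3} to the last term of $I(u)$ gives $I(u)\ge\frac12\|\nabla u\|_2^2+\big(\frac1q-\frac{\mathcal K_{N,q}^*}{p_q^*}c^{(p_q^*-q)/2}\big)\|\nabla u\|_q^q$, and by the definition \eqref{4.1} of $c_2^*$ the hypothesis $c\le c_2^*$ is precisely $\frac{\mathcal K_{N,q}^*}{p_q^*}c^{(p_q^*-q)/2}\le\frac1q$, whence $I(u)\ge\frac12\|\nabla u\|_2^2\ge0$. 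On the other hand, letting $t\to0^+$ in the displayed formula (both exponents being positive) gives $I(u_t)\to0$, so $m(c)\le0$, exactly as in the proof of Lemma~\ref{lm3.1}. Therefore $m(c)=0$.

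For $c>c_2^*$ I would exhibit a dilating family along which $I\to-\infty$. Put $v:=\big(c/\|Q_q^*\|_2^2\big)^{1/2}Q_q^*\in S(c)$; since both sides of \eqref{p-GN3} are $p_q^*$-homogeneous under $u\mapsto\mu u$, the function $v$ still realizes equality in \eqref{p-GN3}, so
$$\frac1q\|\nabla v\|_q^{q}-\frac1{p_q^*}\int\frac{|v|^{p_q^*}}{|x|^{b}}\,dx=\Big(\frac1q-\frac{\mathcal K_{N,q}^*}{p_q^*}c^{(p_q^*-q)/2}\Big)\|\nabla v\|_q^{q},$$
which is strictly negative precisely because $c>c_2^*$. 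Inserting $v$ into the expression for $I(v_t)$, the coefficient of $t^{(N(q-2)+2q)/2}$ is negative while $\tfrac{N(q-2)+2q}{2}>2$, so $I(v_t)\to-\infty$ as $t\to+\infty$, and hence $m(c)=-\infty$; this is also what yields the mountain-pass geometry recorded in Remark~\ref{rek1.2}.

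It remains to rule out constrained critical points. Suppose $u\in S(c)$ with $c\le c_2^*$ satisfies $(I|_{S(c)})'(u)=0$; then $(\lambda,u)$ solves Eq.~\eqref{eq1.1} for some $\lambda\in\R$, so Lemma~\ref{lm5.0} gives $P(u)=0$, which, using $N(p_q^*-2)+2b=N(q-2)+2q$, reads $\|\nabla u\|_2^2=\frac{N(q-2)+2q}{2}\big(\frac1{p_q^*}\int|x|^{-b}|u|^{p_q^*}dx-\frac1q\|\nabla u\|_q^q\big)$. By \eqref{p-GN3} and $c\le c_2^*$ the bracket is $\le0$, so $\nabla u\equiv0$, hence $u\equiv0$ (recall $u\in X\subset H^1(\R^N)$), contradicting $u\in S(c)$. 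Thus $I|_{S(c)}$ has no critical point when $c\le c_2^*$, and $m(c)$ is never attained: for $c>c_2^*$ because $m(c)=-\infty$, and for $c\le c_2^*$ because a minimizer of $m(c)$ would be a constrained critical point (equivalently, a minimizer $u$ of $m(c)=0$ would satisfy $0=I(u)\ge\frac12\|\nabla u\|_2^2$, forcing $u\equiv0$). The computations are routine; the only step requiring care is the second one, where one must invoke that $\mathcal K_{N,q}^*$ is \emph{attained} --- by $Q_q^*$, and hence by the rescaled $v$ --- so that the sign change of the $t^{(N(q-2)+2q)/2}$-coefficient takes place exactly at the threshold $c_2^*$ rather than at some larger value. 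No genuine obstacle is anticipated.
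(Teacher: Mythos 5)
Your proof is correct and follows essentially the same route as the paper: the sharp inequality \eqref{p-GN3} gives $m(c)=0$ for $c\le c_2^*$, a dilated copy of the optimizer $Q_q^*$ placed in $S(c)$ gives $m(c)=-\infty$ for $c>c_2^*$, and the Pohozaev identity $P(u)=0$ combined with \eqref{p-GN3} forces $\|\nabla u\|_2=0$ for any constrained critical point. The only (immaterial) difference is that you normalize the mass by amplitude rescaling $\mu Q_q^*$ (correctly noting that equality in \eqref{p-GN3} is preserved), whereas the paper uses the $L^2$-rescaling $t_0 Q_q^*(t_0^{-1}x)$.
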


\begin{proof}[Proof of Lemma 4.1]
For any $u\in S(c)$, using the Gagliardo-Nirenberg inequality \eqref{p-GN3}, we have
\begin{equation}\label{4.3}
\begin{split}
 I(u)  &\geq \frac{1}{2}\|\nabla u\|_2^{2}+\frac{1}{q}\|\nabla u\|_q^{q}-\frac{1}{p_q^*} \mathcal{K}_{N,q}^* \|\nabla u\|_q^{q}\|u\|_2^{p_q^*-q}\\
       & = \frac{1}{2}\|\nabla u\|_2^{2}+ (\frac{1}{q}-\frac{1}{p_q^*} \mathcal{K}_{N,q}^* c^{\frac{p_q^*-q}{2}})  \|\nabla u\|_q^{q}\\
       & = \frac{1}{2}\|\nabla u\|_2^{2}+\frac{1}{q}\Big[1-\Big(\frac{c}{c_2^*}\Big)^{\frac{p_q^*-q}{2}}\Big] \|\nabla u\|_q^{q}.
\end{split}
\end{equation}
This yields that when $0<c \leq c_2^*$, $I(u)\geq 0$ for any $u \in S(c)$. Then as in the proof of Lemma \ref{lm3.1}, we conclude that $m(c)=0$ as $0<c \leq c_2^*$. 

On the other hand, using the scaling in \eqref{3.11} with $u=Q_q^*, t_0=\Big(\frac{c}{\|Q_q^*\|_2^2}\Big)^{\frac{1}{N+2}}$ and $p=p_q^*$, and also the scaling in \eqref{scaling1}, then $(u^{t_0})_t\in S(c)$ for all $t>0$, and  
\begin{align}\label{4.4}
 \begin{split}
I((u^{t_0})_t) &=t_0^N\Big [\frac{t^{2}}{2}\|\nabla Q_q^*\|_2^{2}+\frac{t^{\frac{N(q-2)+2q}{2}}}{q}\|\nabla Q_q^*\|_q^{q}-\frac{t^{\frac{N(q-2)+2q}{2}}\cdot t_0^{p_q^*-b}}{p_q^*} \mathcal{K}_{N,q}^* \|\nabla Q_q^*\|_q^{q}\|Q_q^*\|_2^{p_q^*-q}\Big]\\
       &=t_0^N\Big\{\frac{t^{2}}{2}\|\nabla Q_q^*\|_2^{2} + \frac{t^{\frac{N(q-2)+2q}{2}}}{q}\Big[1-\Big(\frac{c}{c_2^*}\Big)^{\frac{q-b}{N}}\Big] \|\nabla Q_q^*\|_q^{q}\Big\}.
 \end{split}
\end{align}
This shows that when $c>c_2^*$, $\lim_{t\to +\infty}I((u^{t_0})_t) \to -\infty $, then  we have $m(c)=-\infty$ for $c>c_2^*$.  Thus \eqref{4.2} is verified.\\

Now we show that for $c\leq c_2^*$, $I(u)$ has no critical points on $S(c)$. Indeed, if we assume by contradiction that there exist $\hat{c}\leq c_2^*$ and a function $\hat{u}\in S(\hat{c})$ being a critical point of $I(u)$ on $S(\hat{c})$. Then by Lemma \ref{lm5.0}, $P(\hat{u})$=0. On the other hand, as \eqref{4.3}, using the Gagliardo-Nirenberg inequality \eqref{p-GN3}, we have
\begin{equation}\label{4.5}
\begin{split}
 0=P(\hat{u})  &=\|\nabla \hat{u}\|_2^{2}+\frac{N(q-2)+2q}{2q}\|\nabla \hat{u}\|_q^{q}-\frac{N(p_q^*-2)+2b}{2p_q^*} \int \frac{|\hat{u}|^p}{|x|^b}dx\\
               &\geq \|\nabla \hat{u}\|_2^{2}+\frac{N(q-2)+2q}{2q}\Big[1-\Big(\frac{\hat{c}}{c_2^*}\Big)^{\frac{p_q^*-q}{2}}\Big] \|\nabla \hat{u}\|_q^{q}\geq \|\nabla \hat{u}\|_2^{2}.
\end{split}
\end{equation} 
This implies that $\|\nabla \hat{u}\|_2=0$, which clearly contradicts with the fact that $\hat{u}\in S(\hat{c})$. Then $I(u)$ has no critical points on $S(c)$ for all $c\leq c_2^*$. The remaining is trivial, then the proof is completed.\\
\end{proof}

\section{THE MASS SUPERCRITICAL CASE}\label{sec-critical}\label{sec-supercritical}
In this section, we consider the mass supercritical case $p_q^*<p<\frac{q(N-b)}{(N-q)^+}$, where $p_q^*=\frac{2(q-b)}{N}+q$. In this case, the functional $I(u)$ is unbounded from below on $S(c)$, see Remark \ref{rek3.1}. Hence one needs to find solutions as local type of minimizers of $I(u)$ on $S(c)$. We shall consider both the existence of ground state solutions and also infinitely many bound state solutions on $S(c)$.

\subsection{Ground state solutions on $S(c)$}
To find a critical point of $I(u)$ on $S(c)$ which has the least energy among all solutions on $S(c)$, inspired by the work of \cite{BJL,J,LH}, we consider the following local type minimization problem:
\begin{equation}\label{localmini}
\gamma(c) := \inf_{u \in V(c)} I(u),\ c>0,
\end{equation}
where $V(c):=\{u\in S(c)\ :\ P(u)=0\}$, with
\begin{equation}\label{Q}
P(u) := \| \nabla u\|_2^2 + \frac{N(q-2)+2q}{2q} \| \nabla u\|_q^q - \frac{N(p-2)+2b}{2p} \int\frac{|u|^p}{|x|^b}dx = 0.
\end{equation}
Note that 
\begin{equation}\label{Qt}
P(u)=\frac{d}{dt} I(u_t) \mid_{t=1} =0 ,\quad u_t= t^{\frac{N}{2}}u(tx) \in S(c).
\end{equation}
In Lemma \ref{lm5.0}, we show that if $u$ is a weak solution of Eq. \eqref{eq1.1}, then necessarily $P(u)=0$. Therefore, ``$P(u)=0$" is referred as a variant of Pohozaev identity. In addition, from Lemma \ref{lm5.1} and Lemma \ref{lm5.2}, we see that for all $c>0$, $V(c)\neq \emptyset$, hence $\gamma(c)$ is well-defined on $\R^+$.

In what follows, we aim to show that for some $c>0$, $\gamma(c)$ is attained by some $u_c\in V(c)$, and $u_c$ is a critical point of $I(u)$ on $S(c)$. Firstly, we prove that
\begin{lemma}\label{lm5.1}
Assume that $p_q^*<p<\frac{q(N-b)}{(N-q)^+}$, then 
\begin{equation*}
\gamma(c)>0,\ \forall\ c>0.
\end{equation*}
\end{lemma}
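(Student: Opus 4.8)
The plan is to use the constraint $P(u)=0$ to rewrite $I$ restricted to $V(c)$ as a \emph{positive} combination of $\|\nabla u\|_2^2$ and $\|\nabla u\|_q^q$, and then to show that along $V(c)$ these Sobolev-type quantities stay uniformly bounded away from zero, which upgrades positivity of $I$ on $V(c)$ to strict positivity of the infimum $\gamma(c)$.

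First, for any $u\in V(c)$ I would subtract a suitable multiple of the Pohozaev functional from $I$:
\[
I(u)=I(u)-\frac{2}{N(p-2)+2b}P(u)=\Big(\frac12-\frac{2}{N(p-2)+2b}\Big)\|\nabla u\|_2^2+\Big(\frac1q-\frac{N(q-2)+2q}{q[N(p-2)+2b]}\Big)\|\nabla u\|_q^q.
\]
A short computation shows the first coefficient is positive exactly when $p>2+\frac{2(2-b)}{N}$ and the second exactly when $p>p_q^*=q+\frac{2(q-b)}{N}$; since $q>2$ forces $p_q^*>2+\frac{2(2-b)}{N}$, the hypothesis $p>p_q^*$ makes \emph{both} coefficients strictly positive. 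This already gives $I(u)>0$ for every $u\in V(c)$, but not yet a uniform lower bound on the infimum.

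To obtain the uniform bound I would combine $P(u)=0$ with the sharp inhomogeneous Gagliardo--Nirenberg inequality \eqref{p-GN} and $\|u\|_2^2=c$:
\[
\frac{N(q-2)+2q}{2q}\|\nabla u\|_q^q\le\|\nabla u\|_2^2+\frac{N(q-2)+2q}{2q}\|\nabla u\|_q^q=\frac{N(p-2)+2b}{2p}\int\frac{|u|^p}{|x|^b}\,dx\le C(c)\,\|\nabla u\|_q^{\sigma_{p,q}},
\]
with $C(c)=\frac{N(p-2)+2b}{2p}\,\mathcal{K}_{N,p,q}\,c^{(p-\sigma_{p,q})/2}$. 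Since $u\in S(c)$ with $c>0$ we have $\|\nabla u\|_q\neq0$, and because $p>p_q^*$ one checks that $\sigma_{p,q}>q$; dividing through by $\|\nabla u\|_q^q$ then yields $\|\nabla u\|_q^{\sigma_{p,q}-q}\ge c_0(c)$ for an explicit constant $c_0(c)>0$ independent of $u$. Feeding this into the displayed identity for $I(u)$ gives $I(u)\ge\big(\frac1q-\frac{N(q-2)+2q}{q[N(p-2)+2b]}\big)c_0(c)^{q/(\sigma_{p,q}-q)}>0$ for all $u\in V(c)$, hence $\gamma(c)>0$.

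The only mildly delicate point is checking the sign conditions on the two coefficients and the inequality $\sigma_{p,q}>q$, all of which reduce to the single hypothesis $p>p_q^*$ (together with $q>2$); the rest is a routine substitution and one application of \eqref{p-GN}. I do not anticipate a genuine obstacle here: this is the standard ``subtract the Pohozaev functional, then invoke the Gagliardo--Nirenberg inequality'' scheme, and the mass-supercritical range $p>p_q^*$ is precisely what makes the coefficients and exponents cooperate.
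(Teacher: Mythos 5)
Your proposal is correct and follows essentially the same route as the paper: the paper likewise uses $P(u)=0$ together with the sharp Gagliardo--Nirenberg inequality \eqref{p-GN} and $\sigma_{p,q}>q$ to get the uniform lower bound $\|\nabla u\|_q\geq C>0$ on $V(c)$, and then rewrites $I(u)$ on $V(c)$ as exactly the positive combination $\frac{Np-2(N+2)+2b}{2[N(p-2)+2b]}\|\nabla u\|_2^2+\frac{Np-q(N+2)+2b}{q[N(p-2)+2b]}\|\nabla u\|_q^q$, which coincides with your $I(u)-\frac{2}{N(p-2)+2b}P(u)$. The only difference is the order of the two steps, which is immaterial.
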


\begin{proof}
For any $c>0$, let $u\in V(c)$, then from $P(u)=0$ and the $L^p$-Gagliardo-Nirenberg inequality \eqref{p-GN}, we deduce that
\begin{align*}
\frac{N(q-2)+2q}{2q} \| \nabla u\|_q^q &\leq \frac{N(p-2)+2b}{2p} \int\frac{|u|^p}{|x|^b}dx \\
                                    &\leq \frac{N(p-2)+2b}{2p} \mathcal{K}_{N,p,q} \|\nabla u\|_q^{\sigma_{p,q}}c^{\frac{p-\sigma_{p,q}}{2}}.
\end{align*}

Since $\sigma_{p,q}>q$ as $p>p_q^*$, then this implies that 
\begin{equation}\label{5.4}
  \| \nabla u\|_q \geq C(N,p,q,\mathcal{K}_{N,p,q})>0.
\end{equation}
On the other hand, for any $u\in V(c)$, we have
\begin{equation}\label{5.5}
\begin{split}
  I(u)&=\frac{1}{2}\|\nabla u\|_2^{2}+\frac{1}{q}\|\nabla u\|_q^{q}-\frac{1}{p} \int\frac{|u|^{p}}{|x|^{b}}dx\\
      &=\dfrac{Np-2(N+2)+2b}{2[N(p-2)+2b]}\|\nabla u\|_2^{2} +\dfrac{Np-q(N+2)+2b}{q[N(p-2)+2b]}\| \nabla u\|_q^q.
\end{split}
\end{equation}
Since $Np-2(N+2)+2b>Np-q(N+2)+2b>0$ as $p>p_q^*$, then we obtain from \eqref{5.4} and \eqref{5.5} that $\gamma(c)>0$.
\end{proof}

\begin{lemma}\label{lm5.2}
Assume that $p_q^*<p<\frac{q(N-b)}{(N-q)^+}$. Then for any $u \in S(c)$, there exists a unique $t_u >0$, such that $P(u_{t_u})=0$ and $I(u_{t_u})=\max_{t>0} I(u_t)$. Moreover, if $P(u)\leq 0$, then $t_u\in (0,1]$. 

As a consequence, for all $c>0$, we have
\begin{equation}\label{5.5.1}
\gamma(c)=\inf_{u\in S(c)}\max_{t>0}I(u_t).
\end{equation}
\end{lemma}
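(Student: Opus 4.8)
The plan is to reduce the whole statement to a one–variable study of the fiber map $t\mapsto I(u_t)$ along the $L^2$–preserving scaling $u_t(x)=t^{N/2}u(tx)$. Fix $u\in S(c)$; since $c>0$ we have $u\neq0$, hence $B:=\|\nabla u\|_q^q>0$ and $C:=\int|x|^{-b}|u|^p\,dx>0$, while $A:=\|\nabla u\|_2^2\ge0$. Put $\alpha:=\tfrac{N(q-2)+2q}{2}$ and $\beta:=\tfrac{N(p-2)+2b}{2}$; by \eqref{scaling1},
\[
g(t):=I(u_t)=\frac{t^2}{2}A+\frac{t^\alpha}{q}B-\frac{t^\beta}{p}C,\qquad t>0 .
\]
Two exponent inequalities drive everything: $\alpha>2$ (equivalent to $(N+2)(q-2)>0$, true since $q>2$) and $\beta>\alpha$ (a direct computation shows this is exactly $p>p_q^*$), so $2<\alpha<\beta$. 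A one–line computation gives $P(u_t)=t\,g'(t)$ for all $t>0$; hence $P(u_t)=0$ iff $u_t$ is a critical point of the fiber map, and in particular $P(u)=g'(1)$.

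First I would establish existence of a maximizer: $g(t)\to0$ as $t\to0^+$, $g(t)>0$ for $t>0$ small (the positive term $\tfrac{t^\alpha}{q}B$ dominates the higher–order term $-\tfrac{t^\beta}{p}C$ near $0$), and $g(t)\to-\infty$ as $t\to+\infty$ because $\beta>\alpha>2$ makes $-\tfrac{t^\beta}{p}C$ the leading term; by continuity $g$ attains a positive maximum at some interior point $t_u\in(0,\infty)$, so $g'(t_u)=0$ and $P(u_{t_u})=0$.

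The crux is uniqueness. Factor $g'(t)=t\,\phi(t)$ with $\phi(t):=A+\tfrac{\alpha}{q}B\,t^{\alpha-2}-\tfrac{\beta}{p}C\,t^{\beta-2}$, so it suffices to show $\phi$ has a unique zero on $(0,\infty)$. I would divide by $t^{\alpha-2}$: the function $t\mapsto\phi(t)/t^{\alpha-2}=A\,t^{2-\alpha}+\tfrac{\alpha}{q}B-\tfrac{\beta}{p}C\,t^{\beta-\alpha}$ is \emph{strictly decreasing} on $(0,\infty)$ since $2-\alpha<0$ and $\beta-\alpha>0$, it tends to a positive limit as $t\to0^+$ (to $+\infty$ if $A>0$, to $\tfrac{\alpha}{q}B>0$ if $A=0$) and to $-\infty$ as $t\to+\infty$; hence it has a unique zero $t_u$, with $\phi>0$ on $(0,t_u)$ and $\phi<0$ on $(t_u,\infty)$. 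Consequently $g$ is strictly increasing on $(0,t_u)$ and strictly decreasing on $(t_u,\infty)$, so $t_u$ is the unique critical point and $I(u_{t_u})=\max_{t>0}I(u_t)$. Moreover, if $P(u)=\phi(1)\le0$ then $1\notin(0,t_u)$ (where $\phi>0$), hence $t_u\le1$, i.e. $t_u\in(0,1]$.

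Finally, for \eqref{5.5.1}: for any $u\in S(c)$ the point $u_{t_u}$ lies in $V(c)$, so $\max_{t>0}I(u_t)=I(u_{t_u})\ge\gamma(c)$, giving one inequality after taking the infimum over $S(c)$ (this also re-proves $V(c)\neq\emptyset$). Conversely, for any $v\in V(c)$ one has $P(v)=0$, so by uniqueness $t_v=1$ and $\max_{t>0}I(v_t)=I(v)$; taking the infimum over $v\in V(c)$ gives the reverse inequality. The only genuinely delicate step is the uniqueness of the zero of $g'$; everything else is routine once the ordering $2<\alpha<\beta$ — equivalently $q>2$ together with $p>p_q^*$ — is in hand.
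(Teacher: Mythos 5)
Your proof is correct and follows essentially the same route as the paper: both arguments reduce to the fiber map $t\mapsto I(u_t)$, use the identity $P(u_t)=t\,\frac{d}{dt}I(u_t)$, and obtain uniqueness of the critical point from the exponent ordering $2<\tfrac{N(q-2)+2q}{2}<\tfrac{N(p-2)+2b}{2}$ forced by $q>2$ and $p>p_q^*$. Your division by $t^{\alpha-2}$ simply makes explicit the monotonicity that the paper dismisses as ``easy to verify,'' and your two-inequality proof of \eqref{5.5.1} is equivalent to the paper's $\varepsilon$-argument.
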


\begin{proof}
Fixed $u\in S(c)$, we observe that 
\begin{align}\label{5.6}
\begin{split}
P(u_t)&=t^2\Big[\|\nabla u\|_2^{2} +\frac{N(q-2)+2q}{2q}\cdot t^{\frac{N(q-2)+2q}{2}-2}\|\nabla u\|_q^{q} \\
&-\frac{N(p-2)+2b}{2p}\cdot t^{\frac{N(p-2)+2b}{2}-2}\int\frac{|u|^{p}}{|x|^{b}}dx\Big] \triangleq t^2\cdot g(t).
\end{split}
\end{align}
Since $2<\frac{N(q-2)+2q}{2q}<\frac{N(p-2)+2b}{2p}$ as $p>p_q^*$, then it is easy to verify that there exists a unique $t_u>0$ such that $g(t_u)=0$. Equivalently, there exists a unique $t_u>0$, such that $P(u_{t_u})=0$. 

On the other, from \eqref{scaling1}, we have 
\begin{align}\label{5.7}
\begin{split}
\frac{d}{dt}I(u_t)&=t\|\nabla u\|_2^{2} + \frac{N(q-2)+2q}{2q}\cdot\frac{t^{\frac{N(q-2)+2q}{2}-1}}{q} \|\nabla u\|_q^{q} \\
&-\frac{N(p-2)+2b}{2p}\cdot\frac{t^{\frac{N(p-2)+2b}{2}-1}}{p} \int\frac{|u|^{p}}{|x|^{b}}dx=\frac{P(u_t)}{t}.
\end{split}
\end{align}
Therefore, we have 
$$\frac{d}{dt}I(u_t)>0, \mbox{ as } 0<t < t_u,\quad \mbox{ and } \frac{d}{dt}I(u_t)<0, \mbox{ as } t > t_u.$$
Hence, $I(u_{t_u})=\max_{t>0} I(u_t)$.  Moreover, if $P(u)<0$, since $P(u_t)>0$ as $t>0$ small enough, then by continuity we know that $0<t_u<1$. 

To end the proof, we verify \eqref{5.5.1}. Denote 
$$d(c):=\inf_{u\in S(c)}\max_{t>0}I(u_t),$$ 
then by the definition of $\gamma(c)$, for any $\varepsilon>0$, there is $u_{\varepsilon}\in V(c)$, such that $I(u_{\varepsilon})<\gamma(c)+\varepsilon$. By the above proof, $\max_{t>0}I((u_{\varepsilon})_t)=I(u_{\varepsilon})$, this shows that 
$$d(c)\leq \max_{t>0}I((u_{\varepsilon})_t)<\gamma(c)+\varepsilon.$$
Then $d(c)\leq \gamma(c)$. On the other hand, for any $u\in S(c)$, there is $t_u>0$ such that $u_{t_u}\in V(c)$, and 
$$\max_{t>0}I((u_t)=I(u_{t_u})\geq \gamma(c),$$
which implies that $d(c)\geq \gamma(c)$. Thus \eqref{5.5.1} follows.
\end{proof}

\begin{lemma}\label{lm5.3}
Assume that $p_q^*<p<\frac{q(N-b)}{(N-q)^+}$. Then the mapping: $c\mapsto \gamma(c)$ is non-increasing on $\R^+$. Namely, for any $0<c_1<c_2$, $\gamma(c_2)\leq \gamma(c_1)$. 
\end{lemma}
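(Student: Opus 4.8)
The plan is to prove that $c \mapsto \gamma(c)$ is non-increasing by using the variational characterization \eqref{5.5.1} established in Lemma \ref{lm5.2}, namely $\gamma(c) = \inf_{u \in S(c)} \max_{t>0} I(u_t)$. Fix $0 < c_1 < c_2$. I would take an arbitrary $u \in S(c_1)$ and produce a competitor in $S(c_2)$ whose min-max value is no larger than $\max_{t>0} I(u_t)$, up to a controllable error; taking the infimum over $u$ then yields $\gamma(c_2) \le \gamma(c_1)$.

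The natural competitor is a scaling that adjusts the mass without raising the energy of the fiber map. First I would note that for any $\theta \ge 1$, the function $v := \theta^{1/2} u$ (simple multiplication) lies in $S(\theta c_1)$; choosing $\theta = c_2/c_1 \ge 1$ puts $v \in S(c_2)$. Then I need to compare $\max_{t>0} I(v_t)$ with $\max_{t>0} I(u_t)$, where $v_t = t^{N/2} v(tx) = \theta^{1/2} u_t$. Writing out $I(v_t)$ using \eqref{scaling1}, we get $I(v_t) = \frac{\theta t^2}{2}\|\nabla u\|_2^2 + \frac{\theta t^{(N(q-2)+2q)/2}}{q}\|\nabla u\|_q^q - \frac{\theta^{p/2} t^{(N(p-2)+2b)/2}}{p}\int |x|^{-b}|u|^p\,dx$. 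Since $p > 2$ we have $\theta^{p/2} \ge \theta$, so $I(v_t) \le \theta\big[\frac{t^2}{2}\|\nabla u\|_2^2 + \frac{t^{(N(q-2)+2q)/2}}{q}\|\nabla u\|_q^q - \frac{t^{(N(p-2)+2b)/2}}{p}\int |x|^{-b}|u|^p\,dx\big] = \theta\, I(u_t)$ — but $\theta \ge 1$ makes this inequality go the wrong way when $I(u_t) > 0$.

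To fix the sign issue I would instead exploit the extra negative contribution of the nonlinear term more carefully, or — more cleanly — use a two-parameter scaling. A robust route: given $u \in S(c_1)$, let $t_u$ be the unique maximizer from Lemma \ref{lm5.2}, so $w := u_{t_u} \in V(c_1)$ and $I(w) = \gamma$-fiber-value of $u$. Now set $\tilde w(x) := w(\mu x)$ for a dilation parameter $\mu > 0$ chosen so that $\|\tilde w\|_2^2 = \mu^{-N}\|w\|_2^2 = c_2$, i.e. $\mu = (c_1/c_2)^{1/N} < 1$. Combined with the earlier observation that I may freely take the sup over $t$, I would estimate $\max_{t>0} I(\tilde w_t)$ directly: the point is that after dilation one has extra room because the nonlinear exponent $\frac{N(p-2)+2b}{2} > \frac{N(q-2)+2q}{2} > 2$, so the $\mu$-dependence of the three terms in $I$, when re-optimized over $t$, produces a value that is monotone in $\mu$ in the favorable direction. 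Concretely I would compute the value $\max_{t>0} I(w(\mu \cdot)_t)$ as an explicit function of $\mu$ via the same fiber analysis as in Lemma \ref{lm5.2}, show it is non-decreasing in $\mu$ on $(0,1]$, and conclude $\gamma(c_2) \le \max_{t>0} I(\tilde w_t) \le \max_{t>0} I(w_t) = I(w)$; taking the infimum over $u \in S(c_1)$ gives the claim.

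The main obstacle is precisely this sign bookkeeping: a naive mass-scaling $u \mapsto \theta^{1/2} u$ scales the nonlinear term by $\theta^{p/2}$ and the quadratic/$q$-terms by $\theta$, and since $\theta > 1$ this helps the nonlinear (negative) term but hurts the positive terms, so one cannot conclude monotonicity from that scaling alone. The resolution is to combine the mass change with a spatial dilation and then re-optimize over the fiber parameter $t$ — the freedom in \eqref{5.5.1} to choose $t$ after the dilation is what absorbs the unfavorable scaling of the gradient terms. I expect the verification that the resulting explicit one-variable function of $\mu$ (or equivalently of the dilation) is monotone to be a short but slightly delicate calculus exercise, relying crucially on the strict ordering $2 < \frac{N(q-2)+2q}{2q} < \frac{N(p-2)+2b}{2p}$ that holds exactly in the mass-supercritical regime $p > p_q^*$.
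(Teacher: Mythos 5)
Your proposed reduction does not close, and the failure point is precisely the ``short but slightly delicate calculus exercise'' you defer. First, observe that the dilation-plus-reoptimization step collapses back to the amplitude scaling you already rejected: writing $\tilde w(x)=w(\mu x)$ with $\mu=(c_1/c_2)^{1/N}$, one has $\tilde w_t=t^{N/2}w(\mu t\,\cdot)=\mu^{-N/2}w_{\mu t}$, hence
\[
\max_{t>0}I(\tilde w_t)=\max_{s>0}I(\theta w_s),\qquad \theta:=\mu^{-N/2}=\sqrt{c_2/c_1}>1.
\]
So re-optimizing over the fiber parameter absorbs the spatial dilation entirely, and you are left with exactly the question of whether $\theta\mapsto\max_{s>0} I(\theta w_s)$ is non-increasing for $\theta>1$. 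Differentiating in $\theta$ at $\theta=1$ (where the maximizer is $s=1$ because $w\in V(c_1)$) gives $\|\nabla w\|_2^2+\|\nabla w\|_q^q-\int|x|^{-b}|w|^p\,dx$, which after eliminating the nonlinear term via $P(w)=0$ equals
\[
\frac{(N-2)(p-2_b^*)}{N(p-2)+2b}\,\|\nabla w\|_2^2+\frac{2[(N-q)p-q(N-b)]}{q[N(p-2)+2b]}\,\|\nabla w\|_q^q,
\]
i.e.\ exactly the quantity that controls the sign of the Lagrange multiplier in \eqref{5.13}. This is negative when $N\le 2$ or $p\le 2_b^*$, but for $N\ge3$ and $2_b^*\le p<\frac{q(N-b)}{(N-q)^+}$ --- a range permitted by the hypotheses of Lemma \ref{lm5.3} --- the first coefficient is non-negative, and the paper itself can only conjecture the sign there (Remark \ref{rek5.1}). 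Your monotonicity claim is therefore at best as hard as Lemmas \ref{lm5.5} and \ref{lm5.8}, and it cannot cover the full range in which Lemma \ref{lm5.3} is asserted.

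The paper's proof avoids this sign issue altogether: it takes a near-optimal $u_1\in V(c_1)$ with compact support, adds the missing mass $c_2-c_1$ as a far-away, strongly dilated bump $v_\lambda=\lambda^{N/2}v(\lambda\,\cdot)$ whose support is disjoint from that of $u_1$, so that all terms of $I$ and $P$ split additively and the bump contributes only $o(1)$ (as $\lambda\to0$) to $\max_{t>0}I(\cdot_t)$. This yields $\gamma(c_2)\le\gamma(c_1)+\varepsilon$ for every $\varepsilon>0$ with no information on any Lagrange multiplier, which is why the non-strict monotonicity holds throughout the supercritical range, while the strict version is deferred to Lemma \ref{lm5.8} under the extra hypotheses guaranteeing $\lambda_c<0$. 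To salvage your route you would have to impose those same extra hypotheses, which proves less than the statement requires.
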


\begin{proof}
It suffices to prove that for any $\varepsilon>0$,
\begin{eqnarray}\label{5.9}
\gamma(c_2)<\gamma(c_1)+\varepsilon.
\end{eqnarray}
Indeed, for $\gamma(c_1)$, there exists $u_1 \in  V(c_1)$, such that 
$$\gamma(c_1)\leq I(u_1)<\gamma(c_1)+\frac{\varepsilon}{2},$$
and by Lemma \ref{lm5.2}, $I(u_1)=\max_{t>0}I((u_1)_t)$.  Since $C_c^{\infty}(\R^N)$ is dense in $X$, then $u_1$ can be assumed to have a bounded compact support. Without loss of generality, we assume that $supp\ u_1\subset B_R$ for some $R>0$. Let $v_0\in X\setminus \{0\}$, such that $supp\ v_0\subset B_{2R}\setminus B_R$, and 
$$v:=\sqrt{\frac{c_2-c_1}{\|v_0\|_2^2}}\cdot v_0.$$
Then $v\in S(c_2-c_1)$. Now for any $\lambda\in (0,1)$, we denote $u_2:=u_1+v_{\lambda}$, with $v_{\lambda}:=\lambda^{\frac{N}{2}}v(\lambda x)$. Then 
$$dist \{supp\ u_1, supp\ v_{\lambda}\}>\frac{R}{\lambda}-R>0.$$ 
Thus,
\begin{align*}
\|u_2\|_2^2&=\|u_1\|_2^2+\|v_{\lambda}\|_2^2=c_2,\\
\|\nabla u_2\|_2^2&=\|\nabla u_1\|_2^2 + \lambda^{2}\|\nabla v\|_2^2,\\
\|\nabla u_2\|_q^q&=\|\nabla u_1\|_q^q + \lambda^{\frac{N(q-2)+2q}{2}}\|\nabla v\|_q^q,\\
\int \frac{|u_2|^p}{|x|^b}dx&=\int \frac{|u_1|^p}{|x|^b}dx + \lambda^{\frac{N(p-2)+2b}{2}}\int \frac{|v|^p}{|x|^b}dx.
\end{align*}
Therefore, as $\lambda>0$ small enough, we have
$$\max_{t>0}I((u_2)_t)\leq \max_{t>0}I((u_1)_t)+\frac{\varepsilon}{2}.$$
This implies from \eqref{5.5.1} that 
$$\gamma(c_2)\leq \max_{t>0}I((u_2)_t)< \gamma(c_1)+\varepsilon.$$
Then \eqref{5.9} is verified.
\end{proof}

\begin{lemma}\label{lm5.5}
Assume that $p_q^*<p<\frac{q(N-b)}{(N-q)^+}$. If the couple $(u_c,\lambda_c)\in S(c)\times \R$ solves weakly the equation:
 \begin{equation}\label{eq5.1}
 -\triangle u -\triangle_q u =\lambda u + \frac{|u|^{p-2}u}{|x|^b}, \quad x\in \mathbb{R}^N.
\end{equation}
Then
\begin{itemize}
  \item [(1)] If $N=1,2$, $p_q^* < p <+\infty$, then $\lambda_c < 0$, for all $c >0$;
  \item [(2)] If $N\geq 3$ with $q<\frac{2(N^2-2b)}{N^2-4}$, $p_q^* < p \leq 2_b^*:=\frac{2(N-b)}{(N-2)}$, then $\lambda_c < 0$, for all $c>0$;
  \item [(3)] As $c\to 0^+$, we have 
  \begin{eqnarray}
\|\nabla u_c\|_2\to +\infty,\ \|\nabla u_c\|_q\to +\infty,\ I(u_c)\to +\infty,\ \lambda_c\to -\infty.
  \end{eqnarray}
\end{itemize}
\end{lemma}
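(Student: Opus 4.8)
The plan is to exploit the weak form of \eqref{eq5.1} together with the Pohozaev identity $P(u_c)=0$ from Lemma \ref{lm5.0}, and then combine these with the sharp Gagliardo--Nirenberg inequality \eqref{p-GN}. First I would write down the two scalar identities obtained by testing \eqref{eq5.1} with $u_c$, namely
\begin{equation*}
\|\nabla u_c\|_2^2+\|\nabla u_c\|_q^q-\int\frac{|u_c|^p}{|x|^b}dx=\lambda_c c,
\end{equation*}
and use $P(u_c)=0$ to eliminate $\int|x|^{-b}|u_c|^p dx$. This yields an expression for $\lambda_c c$ as a negative linear combination of $\|\nabla u_c\|_2^2$ and $\|\nabla u_c\|_q^q$ \emph{provided} the coefficients that come out carry the right sign. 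Specifically, subtracting a suitable multiple of $P(u_c)=0$ from the testing identity gives
\begin{equation*}
\lambda_c c=\Big(1-\tfrac{2p}{N(p-2)+2b}\Big)\|\nabla u_c\|_2^2+\Big(1-\tfrac{p[N(q-2)+2q]}{q[N(p-2)+2b]}\Big)\|\nabla u_c\|_q^q,
\end{equation*}
and one checks that $\tfrac{2p}{N(p-2)+2b}>1$ exactly when $p<\tfrac{2N+2b}{N-2}=2_b^*$ (vacuously true when $N\le2$), while the second coefficient is negative precisely when $p>p_q^*$, which holds throughout the mass-supercritical regime. The case $p=2_b^*$ needs a separate look: there the $\|\nabla u_c\|_2^2$-coefficient vanishes, so $\lambda_c c$ equals the (strictly negative, since $p=2_b^*>p_q^*$) coefficient times $\|\nabla u_c\|_q^q$, and one still concludes $\lambda_c<0$ as long as $\|\nabla u_c\|_q>0$, which is forced because $u_c\in S(c)$ solves a nontrivial equation. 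This proves (1) and (2); the hypothesis $q<\tfrac{2(N^2-2b)}{N^2-4}$ in (2) is what guarantees $p_q^*<2_b^*$ so that the interval $(p_q^*,2_b^*]$ is nonempty.

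For part (3) I would argue by the variational characterization of $\gamma(c)$ and a blow-up/scaling analysis as $c\to0^+$. From \eqref{5.4} in Lemma \ref{lm5.1} we already have a lower bound $\|\nabla u_c\|_q\geq C>0$ uniform in $c$; but to get divergence one should instead revisit that estimate's proof: since $u_c\in V(c)$, $P(u_c)=0$ combined with \eqref{p-GN} gives
\begin{equation*}
\tfrac{N(q-2)+2q}{2q}\|\nabla u_c\|_q^q\leq \tfrac{N(p-2)+2b}{2p}\mathcal{K}_{N,p,q}\|\nabla u_c\|_q^{\sigma_{p,q}}c^{\frac{p-\sigma_{p,q}}{2}},
\end{equation*}
hence $\|\nabla u_c\|_q^{\,\sigma_{p,q}-q}\geq C' c^{-\frac{p-\sigma_{p,q}}{2}}$, and since $\sigma_{p,q}>q$ in the mass-supercritical regime the right side blows up as $c\to0^+$; this forces $\|\nabla u_c\|_q\to+\infty$. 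To pass this to $\|\nabla u_c\|_2$ I would use \eqref{5.5}, which expresses $I(u_c)=\gamma(c)$ as a positive combination of $\|\nabla u_c\|_2^2$ and $\|\nabla u_c\|_q^q$; monotonicity (Lemma \ref{lm5.3}) and the lower bound from $\|\nabla u_c\|_q\to\infty$ together give $I(u_c)\to+\infty$. For $\|\nabla u_c\|_2\to+\infty$ and $\lambda_c\to-\infty$, I would plug the divergence of the gradient norms back into the formula for $\lambda_c c$ derived above: since both coefficients there are negative and bounded away from zero, and at least $\|\nabla u_c\|_q^q\to\infty$, we get $\lambda_c c\to-\infty$; as $c\to0^+$ this gives $\lambda_c\to-\infty$. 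The statement $\|\nabla u_c\|_2\to\infty$ can be extracted by testing \eqref{eq5.1} with $u_c$ once more, now using that $\lambda_c<0$ so that $-\lambda_c c>0$ adds to $\|\nabla u_c\|_2^2+\|\nabla u_c\|_q^q=\int|x|^{-b}|u_c|^p+\lambda_c c$, combined with the Gagliardo--Nirenberg bound on the right-hand side in the $L^2$-gradient; alternatively, the interpolation/Gagliardo--Nirenberg with $q=2$ applied to the identity $P(u_c)=0$ directly gives a lower bound for $\|\nabla u_c\|_2$ in terms of a negative power of $c$.

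The main obstacle I anticipate is part (3)'s claim $\|\nabla u_c\|_2\to+\infty$: unlike $\|\nabla u_c\|_q$, the $\|\nabla u_c\|_2^2$ term in $P(u_c)$ scales like $t^2$ under $u\mapsto u_t$ and is the ``subcritical'' term, so it is not immediately forced to diverge by the Gagliardo--Nirenberg argument alone. The cleanest route is probably to establish it via the $\lambda_c$-identity once $\lambda_c\to-\infty$ is known and $\|\nabla u_c\|_q\to\infty$ fails to account for all of $\lambda_c c$ on its own — but since the $\|\nabla u_c\|_q^q$ coefficient is bounded, $|\lambda_c|c\leq C(\|\nabla u_c\|_2^2+\|\nabla u_c\|_q^q)$ only gives a lower bound on the \emph{sum}; to separate them I would use that $\gamma(c)=I(u_c)$ is a \emph{positive} combination (coefficients of both $\|\nabla u_c\|_2^2$ and $\|\nabla u_c\|_q^q$ in \eqref{5.5} are strictly positive when $p>p_q^*$), so $I(u_c)\to\infty$ already; then to force $\|\nabla u_c\|_2\to\infty$ rather than merely $\|\nabla u_c\|_q\to\infty$, I would test \eqref{eq5.1} with $u_c$, isolate $\|\nabla u_c\|_2^2=\lambda_c c+\int|x|^{-b}|u_c|^p-\|\nabla u_c\|_q^q$, and bound the right side below using $\lambda_c c\to-\infty$ balanced against the Gagliardo--Nirenberg control of $\int|x|^{-b}|u_c|^p$ in terms of $\|\nabla u_c\|_q$ and $c$; if this balance is delicate, one falls back on the Pohozaev identity with $q=2$-interpolation to get $\|\nabla u_c\|_2^2\geq C c^{-\alpha}$ for some $\alpha>0$ directly. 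I would double-check the precise exponent bookkeeping in these scaling estimates, as that is where sign errors typically hide.
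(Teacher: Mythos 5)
Your proposal follows essentially the same route as the paper's proof: you test \eqref{eq5.1} with $u_c$, eliminate $\int |x|^{-b}|u_c|^p\,dx$ via the Pohozaev identity $P(u_c)=0$ to write $\lambda_c c$ as a linear combination of $\|\nabla u_c\|_2^2$ and $\|\nabla u_c\|_q^q$ with negative coefficients under the stated hypotheses (the paper writes the same coefficients in the factored form $\frac{(N-2)(p-2_b^*)}{N(p-2)+2b}$ and $\frac{2(N-q)(p-q_b^*)}{q[N(p-2)+2b]}$), and for (3) you use $P(u_c)=0$ with the $L^q$-Gagliardo--Nirenberg inequality to force $\|\nabla u_c\|_q\to\infty$ and then the $q=2$ inequality \eqref{p-GN2} to force $\|\nabla u_c\|_2\to\infty$, exactly as in the paper, after which $I(u_c)\to+\infty$ and $\lambda_c\to-\infty$ follow from \eqref{5.5} and the $\lambda_c$-formula. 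One small correction: the coefficient of $\|\nabla u_c\|_q^q$ in your expression equals $1-p/\sigma_{p,q}$ and is negative precisely when $p<q_b^*:=\frac{q(N-b)}{(N-q)^+}$ (i.e.\ $p-\sigma_{p,q}>0$), not ``precisely when $p>p_q^*$''; since $p<q_b^*$ is assumed throughout, your conclusion is unaffected.
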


\begin{proof}
For any given $c>0$, by Lemma \ref{lm5.0}, $P(u_c)=0$. In addition, multiplying \eqref{eq5.1} by $u_c$, and integrating by parts, we have
\begin{equation}\label{5.12}
  \lambda_c \|u_c\|_2^2 + \int\frac{|u_c|^p}{|x|^b}dx = \| \nabla u_c\|_2^2 + \| \nabla u_c\|_q^q.
\end{equation}
Thus, using $P(u_c)=0$, we have
\begin{align}\label{5.13}
\begin{split}
  \lambda_c \|u_c\|_2^2  &= \| \nabla u_c\|_2^2 + \| \nabla u_c\|_q^q - \int\frac{|u_c|^p}{|x|^b}dx\\
                    &=\frac{(N-2)p-2(N-b)}{N(p-2)+2b} \| \nabla u_c\|_2^2 + \frac{2[(N-q)p-q(N-b)]}{q[N(p-2)+2b]} \| \nabla u_c\|_q^q\\
                    &=\frac{(N-2)(p-2_b^*)}{N(p-2)+2b} \| \nabla u_c\|_2^2 + \frac{2(N-q)(p-q_b^*)}{q[N(p-2)+2b]} \| \nabla u_c\|_q^q.
\end{split}
\end{align}
Since $2_b^*<q_b^*:=\frac{q(N-b)}{(N-q)^+}$ as $q>2$ and $0<b<\min\{2,N\}$, then if $p \leq 2_b^*$, then clearly $\lambda_c <0$. \\

Again using $P(u_c)=0$ and the $L^q$-Gagliardo-Nirenberg inequality \eqref{p-GN}, we have
\begin{equation*}
 \| \nabla u_c\|_q^q \leq \frac{q[N(p-2)+2b]}{p[N(q-2)+2q]} \int\frac{|u_c|^p}{|x|^b}dx\\
                   \leq C_0(N,p,q,b)\|\nabla u_c \|_q^{\sigma_{p,q}} \| u_c \|_2^{p-\sigma_{p,q}},
\end{equation*}
this leads to
\begin{equation*}
C_0(N,p,q,b)^{-1} \leq  \|\nabla u_c \|_q^{\frac{qN(p-p_q^*)}{N(q-2)+2q}}\| u_c \|_2^{\frac{2(N-q)(q_b^*-p)}{N(q-2)+2q}}.
\end{equation*}
which then shows that 
$$ \|\nabla u_c \|_q \rightarrow +\infty,\ \text{ as }\ c \to 0.$$ 
Further, from $P(u_c)=0$ and \eqref{p-GN2}, we have 
$$\lim_{c\to 0}\int\frac{|u_c|^p}{|x|^b}dx=+\infty,\quad \lim_{c\to 0}\|\nabla u_c \|_2 \rightarrow +\infty.$$
Hence, by \eqref{5.5} and \eqref{5.13}, $I(u_c)\to +\infty,\ \lambda_c\to -\infty$ as $c \to 0$. Then the proof is completed.

\end{proof}

\begin{remark}\label{rek5.1}
When $2_b^* < p < q_b^*$, we conjecture that $\lambda_c < 0$, for all $0<c<c_0$ as some $c_0:=c(N,p,q,b)>0$. In fact, we observe from \eqref{5.13} that 
\begin{equation*}
  \lambda_c \|u_c\|_2^2  = C_1(N,p,q,b) \| \nabla u_c\|_2^2 - C_2(N,p,q,b) \| \nabla u_c\|_q^q,
\end{equation*}
where $C_i(N,p,q,b)>0, i=1,2$. Then the sign of $\lambda_c$ depends on the comparison between $\| \nabla u_c\|_2^2$ and $\| \nabla u_c\|_q^q$. Using $P(u_c)=0$ and the $L^q$-Gagliardo-Nirenberg inequality \eqref{p-GN}, we have
\begin{align*}
\|\nabla u_c\|_2^2 &\leq C_3(N,p,q,b)\| \nabla u_c\|_q^{\sigma_{p,q}}\|u_c\|_2^{p-\sigma_{p,q}}\\
&=C_3(N,p,q,b)\| \nabla u_c\|_q^{\eta}\cdot\Big(\| \nabla u_c\|_q^{\sigma_{p,q}-\eta}\|u_c\|_2^{p-\sigma_{p,q}}\Big),
\end{align*}
where $\eta\in (0,q)$ to be determined. If one could prove that there exist $\eta>0$ and $C_1>0$ independent of $c>0$, such that
\begin{eqnarray}\label{5.14}
\sup_{0<c\leq 1}\| \nabla u_c\|_q^{\sigma_{p,q}-\eta}\|u_c\|_2^{p-\sigma_{p,q}}\leq C_1,
\end{eqnarray}
then 
\begin{equation*}
  \lambda_c \|u_c\|_2^2  \leq \widetilde{C_1}(N,p,q,b) \| \nabla u_c\|_q^{\eta} - C_2(N,p,q,b) \| \nabla u_c\|_q^q,
\end{equation*}
from which we conclude that $\lambda_c < 0$, for all $c>0$ small enough, since $\|\nabla u_c \|_q \rightarrow +\infty$ as $c\to 0$. However, a convinced proof of \eqref{5.14} is still open for us. In the appendix, we provide an exponential decay of $u_c$ but not uniform, which may be helpful to verify \eqref{5.14}.
\end{remark}

\begin{lemma}\label{lm5.8}
Assume that $p_q^*<p<\frac{q(N-b)}{(N-q)^+}$, and $(u_c, \lambda_c) \in X \times \mathbb{R}$ solves
 \begin{equation*}
 -\triangle u -\triangle_q u =\lambda u + \frac{|u|^{p-2}u}{|x|^b}, \quad x\in \mathbb{R}^N,
\end{equation*}
with $I(u_c)=\inf_{u \in V(c)}I(u)=\gamma(c)$. If $\lambda_c <0$, then the mapping: $c\mapsto \gamma(c)$ is strictly decreasing in a right neighborhood of $c$.
\end{lemma}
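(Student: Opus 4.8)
The plan is to use the minimizer $u_c\in V(c)$ itself as a test object. Since $u_c$ realizes the least energy on the constraint and its Lagrange multiplier is negative, enlarging the mass by a small factor and projecting the rescaled function back onto the Pohozaev set should strictly decrease the value of $\gamma$. Concretely, for $s>1$ write $\bar c=s^2c$, so $su_c\in S(\bar c)$, and work with the scaling $v\mapsto v_t:=t^{N/2}v(tx)$, which preserves the $L^2$-norm; by \eqref{5.5.1} it suffices to estimate $\max_{t>0}I\big((su_c)_t\big)$ from above by something $<\gamma(c)$ for $s$ close to $1$.

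First I would record the algebraic identity where the hypothesis enters. Put $A:=\|\nabla u_c\|_2^2$, $B:=\|\nabla u_c\|_q^q$, $C:=\int|u_c|^p|x|^{-b}dx$, all strictly positive since $u_c\not\equiv 0$. From Lemma \ref{lm5.0}, $P(u_c)=0$, i.e. $A+\tfrac{N(q-2)+2q}{2q}B=\tfrac{N(p-2)+2b}{2p}C$; testing Eq. \eqref{eq5.1} with $u_c$ gives $\lambda_c c+C=A+B$. Hence $A+B-C=\lambda_c c<0$. Next, setting $\alpha:=\tfrac{N(q-2)+2q}{2}$ and $\beta:=\tfrac{N(p-2)+2b}{2}$ (so $2<\alpha<\beta$ because $p>p_q^*$), a direct computation as in \eqref{scaling1} gives $I\big((su_c)_t\big)=\tfrac{s^2t^2}{2}A+\tfrac{s^qt^{\alpha}}{q}B-\tfrac{s^pt^{\beta}}{p}C$. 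By Lemma \ref{lm5.2} applied to $su_c\in S(s^2c)$ there is a unique $t_s>0$ with $(su_c)_{t_s}\in V(s^2c)$ and $I\big((su_c)_{t_s}\big)=\max_{t>0}I\big((su_c)_t\big)$; while $(u_c)_{t_s}\in S(c)$ gives $I\big((u_c)_{t_s}\big)\le\max_{t>0}I\big((u_c)_t\big)=I(u_c)=\gamma(c)$. Subtracting the two explicit fiber maps yields
$$\gamma(s^2c)-\gamma(c)\ \le\ \frac{(s^2-1)t_s^{2}}{2}A+\frac{(s^q-1)t_s^{\alpha}}{q}B-\frac{(s^p-1)t_s^{\beta}}{p}C.$$

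The remaining step is to control $t_s$ as $s\to1^{+}$. The number $t_s$ is the unique positive zero of $t\mapsto A+\tfrac{\alpha}{q}s^{q-2}t^{\alpha-2}B-\tfrac{\beta}{p}s^{p-2}t^{\beta-2}C$, which is $P\big((su_c)_t\big)/(s^2t^2)$; at $s=1$ this is $g_1(t):=A+\tfrac{\alpha}{q}t^{\alpha-2}B-\tfrac{\beta}{p}t^{\beta-2}C$, with $g_1(1)=P(u_c)=0$ and, by the sign analysis in the proof of Lemma \ref{lm5.2}, $g_1>0$ on $(0,1)$ and $g_1<0$ on $(1,\infty)$. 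Since the family depends continuously on $s$, for each small $\varepsilon>0$ and all $s$ near $1$ one has $g_s(1-\varepsilon)>0>g_s(1+\varepsilon)$, forcing $t_s\in(1-\varepsilon,1+\varepsilon)$; hence $t_s\to1$. Dividing the displayed inequality by $s-1>0$, letting $s\to1^{+}$, and using $\tfrac{s^{k}-1}{s-1}\to k$, we get
$$\limsup_{s\to1^{+}}\frac{\gamma(s^2c)-\gamma(c)}{s-1}\ \le\ A+B-C=\lambda_c c<0,$$
so there is $s_0>1$ with $\gamma(\bar c)<\gamma(c)$ for all $\bar c\in(c,s_0^{2}c]$; combined with the monotonicity of Lemma \ref{lm5.3} this gives the strict decrease of $c\mapsto\gamma(c)$ in a right neighborhood of $c$.

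The only genuinely delicate point is the last step: that the projection parameter $t_s$ returns to $1$ as the mass returns to $c$. Without it one would only recover $\gamma(\bar c)\le\gamma(c)$ rather than the strict inequality. Everything else is routine bookkeeping with the explicit scaling maps, and the hypothesis $\lambda_c<0$ is used exactly once, to ensure $A+B-C<0$.
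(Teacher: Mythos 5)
Your argument is correct, and it reaches the same conclusion as the paper ($\gamma(\bar c)<\gamma(c)$ for all $\bar c$ in a right neighborhood of $c$, which is exactly what is needed in Proposition \ref{prop5.1}), but the technical route differs in how the constraint $P=0$ is restored after the mass is enlarged. The paper works with the two-parameter family $u_{t,\theta}=\theta^{N/2}t^{1/2}u_c(\theta x)$, computes $\partial_t\alpha|_{(1,1)}=\tfrac12\lambda_c c$ and $\partial_\theta\alpha|_{(1,1)}=0$, and then invokes the Implicit Function Theorem on $\beta(t,\theta)=P(u_{t,\theta})=0$, which requires checking the non-degeneracy $\partial_\theta\beta|_{(1,1)}\neq 0$ (done there via $P(u_c)=0$ and $p>p_q^*$). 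You instead exploit the global structure of the fiber map from Lemma \ref{lm5.2}: the uniqueness of the zero $t_s$ of $g_s$, its sign pattern, and the characterization \eqref{5.5.1}, so that the reprojection parameter $t_s$ is pinned down by an elementary intermediate-value/continuity argument rather than by the IFT. Your version buys two things: it avoids the non-degeneracy computation entirely, and it exhibits $A+B-C=\lambda_c c$ (which is \eqref{5.12} combined with $P(u_c)=0$, consistent with the paper's $\tfrac12\lambda_c c$ after the reparametrization $t=s^2$) as an explicit one-sided difference quotient bound for $\gamma$. The paper's version is shorter once the IFT is accepted and generalizes more readily to settings where the fiber map is not known to have a unique critical point. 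You correctly isolate the one delicate step, $t_s\to 1$, and your justification of it via the strict sign change of $g_1$ at $t=1$ and continuity in $s$ is sound; without it the inequality would degenerate to $\gamma(\bar c)\le\gamma(c)$, which already follows from Lemma \ref{lm5.3}.
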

\begin{proof}
The idea of the proof stems from \cite[Lemma 5.5]{BJL}, which is based on the Implicit Function Theorem. We denote 
$$u_{t, \theta}(x):=\theta^{\frac{N}{2}}t^{\frac{1}{2}}u_c(\theta x)\in S(tc),$$ 
with $\theta>0$ and $t>0$. Define 
\begin{equation}\label{alpha}
\alpha(t, \theta):= I(u_{t, \theta}),
\end{equation}
\begin{equation}\label{alpha}
\beta(t, \theta):= P(u_{t, \theta}).
\end{equation}
By direct calculations, we have
\begin{equation*}
\frac{\partial \alpha(t,\theta)}{\partial t}_{|_{(1,1)}}=\frac{1}{2} \lambda_c c, \quad \frac{\partial \alpha(t,\theta)}{\partial \theta}_{|_{(1,1)}}=0, \quad \frac{\partial^2 \alpha(t,\theta)}{\partial^2 \theta}_{|_{(1,1)}}<0.
\end{equation*}
Then, for any $\delta_t \in \R$, $ \delta_{\theta} \in \R$,
\begin{equation}
\alpha(1+\delta_t, 1+\delta_{\theta})=\alpha(1,1)+\delta_t \frac{\partial \alpha(t,\theta)}{\partial t}_{|_{(\bar t,\bar \theta)}}+\delta_{\theta} \frac{\partial \alpha(t,\theta)}{\partial \theta}_{|_{(\bar t,\bar \theta)}},
\end{equation}
where $|1- \bar t| \leq |\delta _t|$ and $|1- \bar \theta| \leq |\delta_{\theta}|$. If $\lambda_c <0$, then by continuity, for sufficiently small $\delta_{t} >0$ and sufficiently small  $|\delta_{\theta}|,$ we have 
\begin{equation}\label{crucial}
\alpha(1+\delta_t, 1+\delta_{\theta})<\alpha(1,1).
\end{equation}

Note that $\beta(1,1)=0$, if we have 
\begin{equation}\label{implicit}
\frac{\partial \beta(t,\theta)}{\partial \theta}_{|_{(1,1)}}\neq 0.
\end{equation}
then applying the Implicit Function Theorem, for any $\varepsilon>0$, and $t\in [1, 1+\varepsilon]$, there is a continuous function $\theta(t)$, such that $\beta(t,\theta(t))=0$ for all $t \in [1, 1+\varepsilon]$. Thus by \eqref{crucial}, we have
$$\gamma((1+\varepsilon)c)=\inf_{u \in V((1+\varepsilon)c)}I(u)\leq I(u_{1+\varepsilon, \theta(1+\varepsilon)})<I(u_c)=\gamma(c).$$

Hence, to end the proof, we only need to prove \eqref{implicit}. Indeed, by direct calculation,
$$\frac{\partial \beta(t,\theta)}{\partial \theta}_{|_{(1,1)}} = 2\| \nabla u_c\|_2^2 + \frac{[N(q-2)+2q]^2}{4q} \| \nabla u_c\|_q^q - \frac{[N(p-2)+2b]^2}{4p} \int\frac{|u_c|^p}{|x|^b}dx.$$
In view of $P(u_c)=0$, then 
$$\frac{\partial \beta(t,\theta)}{\partial \theta}_{|_{(1,1)}} = \frac{N}{2}\Big(\frac{2(2-b)}{N}+2-p\Big)\| \nabla u_c\|_2^2 + \frac{N[N(q-2)+2q]}{4q}(p_q^*-p)\| \nabla u_c\|_q^q.$$
Then, since $p> p_q^*:=\frac{2(q-b)}{N}+q$, $q>2$, then the coefficients in the above equality are both strictly negative, and also $\| \nabla u_c\|_2^2, \| \nabla u_c\|_q^q\neq 0$, then \eqref{implicit} is verified.
\end{proof}

\begin{proposition}\label{prop5.1}
Assume that $p_q^*<p<\frac{q(N-b)}{(N-q)^+}$. Then $\gamma(c)$ admits a minimizer, provided that $p$ and $c$ satisfy one of the following conditions:
\begin{itemize}
  \item [(1)] If $N=1,2$, $p_q^* < p <+\infty$, and $c >0$;
  \item [(2)] If $N\geq 3$ with $q<\frac{2(N^2-2b)}{N^2-4}$, $p_q^* < p < 2_b^*:=\frac{2(N-b)}{(N-2)}$, and $c>0$.
\end{itemize}
Furthermore, any minimizer of $\gamma(c)$ can be assumed to be non-negative, radially symmetric and radially decreasing with respect to some point.
\end{proposition}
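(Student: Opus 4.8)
The plan is to carry out the standard minimization over the Pohozaev-type set $V(c)$, using the scaling flow $u\mapsto u_t:=t^{N/2}u(t\cdot)$ and a Brezis-Lieb splitting to produce compactness, and using the non-increasing monotonicity of $\gamma$ (Lemma \ref{lm5.3}) together with the strict-monotonicity criterion (Lemma \ref{lm5.8}) to pin down the mass of the weak limit; the range restrictions (1)--(2) are precisely those under which Lemma \ref{lm5.5} produces a negative Lagrange multiplier.

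\emph{Step 1: a symmetric minimizing sequence and its weak limit.} Given $\{u_n\}\subset V(c)$ with $I(u_n)\to\gamma(c)$, I would first symmetrize: with $u_n^{*}$ the symmetrization of $|u_n|$, the Polya-Szeg\"o inequality and Lemma \ref{lm_ap2} give $P(u_n^{*})\le P(u_n)=0$, and, since symmetrization commutes with $u\mapsto u_t$, also $\max_{t>0}I((u_n^{*})_t)\le\max_{t>0}I((u_n)_t)=I(u_n)$ by Lemma \ref{lm5.2}. Taking $v_n:=(u_n^{*})_{t_n}$ with $t_n>0$ the unique value from Lemma \ref{lm5.2} for which $v_n\in V(c)$, one has $\gamma(c)\le I(v_n)=\max_{t>0}I((u_n^{*})_t)\le I(u_n)\to\gamma(c)$, so we may assume from the start that the $u_n$ are non-negative, radial and radially decreasing. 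By the identity \eqref{5.5} valid on $V(c)$ --- both coefficients there being positive since $p>p_q^{*}$ --- $I(u_n)$ controls $\|\nabla u_n\|_2^2$ and $\|\nabla u_n\|_q^q$, so with $\|u_n\|_2^2=c$ the sequence $\{u_n\}$ is bounded in $X$; up to a subsequence, $u_n\rightharpoonup u$ in $X$, $u_n\to u$ a.e., and by Lemma \ref{lm_ap1}, $\int|x|^{-b}|u_n|^p\to\int|x|^{-b}|u|^p$. Finally $P(u_n)=0$ and \eqref{5.4} force $\|\nabla u_n\|_q\ge C>0$, hence $\int|x|^{-b}|u_n|^p\ge\delta>0$, so in the limit $\int|x|^{-b}|u|^p>0$; thus $u\ne 0$ and $c':=\|u\|_2^2\in(0,c]$.

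\emph{Step 2: $\|u\|_2^2=c$, the main point.} I would argue by contradiction: assume $c'<c$ and put $w_n:=u_n-u\rightharpoonup 0$. By Lemma \ref{lm5.2} there is $s>0$ with $u_s\in V(c')$, hence $I(u_s)\ge\gamma(c')$. Since scaling distributes over the splitting, $(u_n)_s=u_s+(w_n)_s$, and $\int|x|^{-b}|(w_n)_s|^p=s^{(N(p-2)+2b)/2}\int|x|^{-b}|w_n|^p\to 0$ (Brezis-Lieb for $\int|x|^{-b}|\cdot|^p$ together with Lemma \ref{lm_ap1}); combining this with the Brezis-Lieb identities for $\|\cdot\|_2^2$, $\|\nabla\cdot\|_2^2$, $\|\nabla\cdot\|_q^q$ yields $I((u_n)_s)=I(u_s)+I((w_n)_s)+o(1)$ with $I((w_n)_s)\ge -o(1)$. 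Therefore
\[
\gamma(c)=\lim_n\Big(\max_{t>0}I((u_n)_t)\Big)\ge\liminf_n I((u_n)_s)\ge I(u_s)\ge\gamma(c').
\]
Together with $\gamma(c)\le\gamma(c')$ from Lemma \ref{lm5.3}, this forces $\gamma(c)=\gamma(c')=I(u_s)$, so $u_s\in V(c')$ is a minimizer of $\gamma(c')$. By the natural-constraint property of $V(c')$ --- which uses exactly the non-degeneracy $\partial_\theta\beta(1,1)\ne 0$ computed in the proof of Lemma \ref{lm5.8} --- the couple $(u_s,\lambda')$ solves Eq. \eqref{eq5.1} with $\|u_s\|_2^2=c'$ for some $\lambda'\in\R$, and Lemma \ref{lm5.5}(1)--(2) then gives $\lambda'<0$ under the hypotheses (1)--(2). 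By Lemma \ref{lm5.8}, $\gamma$ is strictly decreasing on a right neighbourhood of $c'$, which with Lemma \ref{lm5.3} gives $\gamma(c)<\gamma(c')$ since $c>c'$ --- contradicting $\gamma(c)=\gamma(c')$. Hence $\|u\|_2^2=c$.

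\emph{Step 3: conclusion.} With $\|u\|_2^2=c$, Brezis-Lieb gives $\|w_n\|_2\to 0$ and $\int|x|^{-b}|w_n|^p\to 0$; picking $s>0$ with $u_s\in V(c)$ (Lemma \ref{lm5.2}) and rerunning the estimate of Step 2 with $c$ in place of $c'$ yields $\gamma(c)\ge I(u_s)$, while $I(u_s)\ge\gamma(c)$ because $u_s\in V(c)$; hence $I(u_s)=\gamma(c)$ and $u_s$ is the desired minimizer. As an a.e. limit of non-negative radially decreasing functions, $u$ --- and hence $u_s$ --- is non-negative, radial and radially decreasing. The main obstacle is Step 2: the only strict monotonicity of $\gamma$ at our disposal (Lemma \ref{lm5.8}) is both local and conditional on a negative multiplier, so one must first exhibit a minimizer at the smaller mass $c'$ on the spot, invoke Lemma \ref{lm5.5} (whose proof is exactly what imposes the dimensional and range constraints (1)--(2)) to get $\lambda'<0$, and then globalise the local strict decrease using the monotonicity of Lemma \ref{lm5.3}.
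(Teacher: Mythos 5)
Your argument is correct, and it rests on the same pillars as the paper's proof: boundedness of the minimizing sequence from \eqref{5.5}, non-vanishing of the weak limit, the non-increasing monotonicity of Lemma \ref{lm5.3}, the negative Lagrange multiplier of Lemma \ref{lm5.5} under hypotheses (1)--(2), the natural-constraint property of Lemma \ref{lm5.6}, and the local strict decrease of Lemma \ref{lm5.8} to exclude mass loss. Where you genuinely diverge is in how the weak limit is identified as a minimizer at its own mass $c'=\|u\|_2^2$. The paper asserts that the weak limit $u_0$ of the minimizing sequence is itself a weak solution of Eq.~\eqref{eq1.1}, deduces $P(u_0)=0$ from Lemma \ref{lm5.0}, and then uses the Brezis--Lieb splitting of $I-\tfrac{2}{N(p-2)+2b}P$ to force $\nabla(u_n-u_0)\to 0$, so that strong convergence comes first and minimality afterwards. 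You instead avoid claiming that the weak limit solves the equation: you project $u$ onto $V(c')$ with the fiber map of Lemma \ref{lm5.2}, exploit the vanishing of $\int|x|^{-b}|u_n-u|^p\,dx$ (Lemma \ref{lm_ap1}) to obtain the chain $\gamma(c)\ge I(u_s)\ge\gamma(c')\ge\gamma(c)$, and only then recover the Euler--Lagrange equation from the minimality via Lemma \ref{lm5.6}. This is a real gain in rigor: a minimizing sequence for $\gamma(c)$ is not a priori a Palais--Smale sequence, so the paper's step ``$u_n\rightharpoonup u_0$ in $X$, hence $u_0$ is a weak solution'' is, as written, a gap that your route repairs. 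The price is that you produce a minimizer ($u_s$, the rescaled limit) without proving strong convergence of the original sequence, but the proposition does not ask for the latter. Your up-front symmetrization of the sequence (in place of the paper's a posteriori symmetrization of a given minimizer) is also sound, since rearrangement commutes with the mass-preserving dilation and moves every term of $I$ and $P$ in the favorable direction, and the rescaling $t_n$ returns the symmetrized functions to $V(c)$.
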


\begin{proof}
Let $\{u_n\}\subset V(c)$ be a minimizing sequence of $\gamma(c)$. Then, we observes from \eqref{5.5} that $\{u_n\}$ is bounded in $X$, thus $u_n \rightharpoonup u_0$ in $X$, and due to Lemma \ref{lm_ap1}, $\int \frac{|u_n|^p}{|x|^b} \rightarrow \int \frac{|u_0|^p}{|x|^b}$. We claim that $u_0 \neq 0$. Indeed, if $u_0=0$, then from $P(u_n)=0$, we deduce that $I(u_n)=o_n(1)=\gamma(c)+o_n(1)$, which contradicts with Lemma \ref{lm5.1}. 

Now, we show that $u_n \rightarrow u_0$ in $X$.
In fact, using the Brezis-Lieb Lemma \cite{BL} and Lemma \ref{lm_ap1}, we have the following decomposition:
\begin{equation*}
\| u_n-u_0 \|_2^2 + \| u_0 \|_2^2 = \| u_n \|_2^2 + o_n(1),
\end{equation*}
\begin{equation}
  I( u_n-u_0 ) + I( u_0 ) = I( u_n ) + o_n(1),   \tag{$ a $}
\end{equation}
\begin{equation}
  P( u_n-u_0 ) + P( u_0 ) = P( u_n ) + o_n(1).   \tag{$ b $}
\end{equation}
In addition,
\begin{equation}
\begin{split}
  &I( u_n-u_0 ) - \frac{2}{N(p-2)+2b} P( u_n-u_0 )\\
  =& \frac{Np-2(N+2)+2b}{2[N(p-2)+2b]}\|\nabla u_n-u_0\|_2^{2} +\frac{Np-q(N+2)+2b}{q[N(p-2)+2b]}\| \nabla u_n-u_0\|_q^q.
\end{split}
\tag{$ c $}
\end{equation}
Since $u_n \rightharpoonup u_0$ in $X$, then $u_0 $ is a weak solution of Eq. \eqref{eq1.1} and then by Lemma \ref{lm5.0}, $P(u_0) = 0$. Thus $I(u_0) \geq \gamma ( \| u_0 \|_2^2) \geq \gamma(c)$. Therefore, from $(a), (b)$ and $(c)$, we have
\begin{equation*}
\frac{Np-2(N+2)+2b}{2[N(p-2)+2b]}\|\nabla u_n-u_0\|_2^{2} +\frac{Np-q(N+2)+2b}{q[N(p-2)+2b]}\| \nabla u_n-u_0\|_q^q = o_n(1).
\end{equation*}
Note that  $Np-2(N+2)+2b>Np-q(N+2)+2b>0$ as $p>p_q^*$. Hence, $\| \nabla (u_n-u_0) \|_2^2  = o_n(1) $ and $\| \nabla (u_n-u_0) \|_q^q  = o_n(1) $. Thus,
\begin{equation*}
  \| \nabla (u_n-u_0) \|_2^2 + \| \nabla u_0 \|_2^2 = \| \nabla u_n \|_2^2 + o_n(1),
\end{equation*}
\begin{equation*}
  \| \nabla (u_n-u_0) \|_q^q + \| \nabla u_0 \|_q^q = \| \nabla u_n \|_q^q + o_n(1),
\end{equation*}
yield that $ \| \nabla u_n \|_2 \rightarrow \| \nabla u_0 \|_2 $, $ \| \nabla u_n \|_q \rightarrow \| \nabla u_0 \|_q $. Hence, from Lemma \ref{lm5.3}, we conclude that
\begin{equation}\label{5.29}
P(u_0)=0,\ I(u_0)=\lim_{n\to \infty}I(u_n)=\gamma(\|u_0\|_2^2)=\gamma(c).
\end{equation}
In particular, $u_0$ is a minimizer of $\gamma(\|u_0\|_2^2)$, with $0<\|u_0\|_2^2\leq c$. Thus, from Lemma \ref{lm5.5} Lemma \ref{lm5.8} we deduce that $\|u_0\|_2^2=c$. Hence, $u_n \rightarrow u_0$ in $X$, and $u_0$ is a minimizer of $\gamma(c)$.

In addition, for given $c>0$, let $u_c$ is a minimizer of $\gamma(c)$. Clearly, $|u_c|$ is also a minimizer, hence without loss of generality, we can assume that $u_c\geq 0$.

Let $u_c^*$ be the Schwartz symmetrization of $u_c$, then by the Polya-Szeg\"o inequality (see e.g. \cite{LL}) and Lemma \ref{lm_ap2}, we have that
\begin{align*}
\|u_c^*\|_2^2=\|u_c\|_2^2=c,\ \|\nabla u_c^*\|_2^2 \leq \|\nabla  u_c\|_2^2,\ \|\nabla u_c^*\|_q^q\leq \|\nabla u_c\|_q^q,\ \int \frac{|u_c|^p}{|x|^b}dx \leq \int \frac{|u_c^*|^p}{|x|^b}dx.
\end{align*}
This implies that
\begin{align*}
&I(u_c^*)\leq I(u_c)=\gamma(c),\\
&P(u_c^*)\leq P(u_c)=0.
\end{align*}

We claim that $P(u_c^*)=0$. Indeed, if we suppose that $P(u_c^*)<0$, then by Lemma \ref{lm5.2}, there exists a unique $t_c\in (0,1)$, such that $P((u_c^*)_{t_c})=0$. Thus, from \eqref{5.5}
\begin{align*}
\gamma(c)&\leq I((u_c^*)_{t_c})\\
&=\dfrac{Np-2(N+2)+2b}{2[N(p-2)+2b]}t_c^{2}\|\nabla u_c^*\|_2^{2} +\dfrac{Np-q(N+2)+2b}{q[N(p-2)+2b]}t_c^{\frac{N(q-2)+2q}{2}}\| \nabla u_c^*\|_q^q\\
&<\dfrac{Np-2(N+2)+2b}{2[N(p-2)+2b]}\|\nabla u_c\|_2^{2} +\dfrac{Np-q(N+2)+2b}{q[N(p-2)+2b]}\| \nabla u_c\|_q^q\\
&=I(u_c)=\gamma(c),
\end{align*}
which is a contradiction. Hence, $P(u_c^*)=0$, then by $I(u_c^*)\leq I(u_c)=\gamma(c)$, $u_c^*$ is also a minimizer of $\gamma(c)$. Moreover,
\begin{eqnarray}\label{5.19}
\int\frac{|u_c|^p}{|x|^b}dx =\int \frac{|u_c^*|^p}{|x|^b}dx.
\end{eqnarray}
Thus, using Lemma \ref{lm_ap2}, we deduce from \eqref{5.19} that $u_c=u_c^*$. Hence $u_c$ is radially symmetric and decreasing with respect to some point. 
\end{proof}

In the following lemma, we prove the behavior of $\gamma(c)$ as $c\to \infty$.
\begin{lemma}\label{lm5.55}
Assume that $p$ satisfies one of the following conditions:
\begin{itemize}
  \item [(1)] If $N=1,2$, $p_q^* < p <+\infty$;
  \item [(2)] If $N\geq 3$ with $q<\frac{2(N^2-2b)}{N^2-4}$, $p_q^* < p < 2_b^*:=\frac{2(N-b)}{(N-2)}$.
\end{itemize}
Then $\lim\limits_{c\to \infty}\gamma(c)=0$. In particular, if $u_c\in S(c)$ is a minimizer of $\gamma(c)$, then 
\begin{align}\label{5.190}
I(u_c)=\gamma(c)\to 0,\ \|\nabla u_c\|_2\to 0,\ \|\nabla u_c\|_q\to 0, \quad \mbox{ as } c\to \infty.
\end{align}
\end{lemma}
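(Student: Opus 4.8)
The plan is to combine the positivity and monotonicity of $\gamma$ established in Lemmas \ref{lm5.1} and \ref{lm5.3} with an explicit upper bound coming from a scaling test family. Since $c\mapsto\gamma(c)$ is positive and non-increasing, the limit $L:=\lim_{c\to\infty}\gamma(c)$ exists and $L\ge0$, so it suffices to prove $\limsup_{c\to\infty}\gamma(c)\le0$. By the minimax identity \eqref{5.5.1} we have $\gamma(c)\le\max_{t>0}I(v_t)$ for any $v\in S(c)$, so the whole problem reduces to exhibiting a family $v_c\in S(c)$ with $\max_{t>0}I((v_c)_t)\to0$ as $c\to\infty$.

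Fix once and for all a function $u\in S(1)$, and set $v_c:=\sqrt c\,u\in S(c)$. From \eqref{scaling1} and the scaling rules $\|\nabla(\sqrt c\,u)\|_2^2=c\|\nabla u\|_2^2$, $\|\nabla(\sqrt c\,u)\|_q^q=c^{q/2}\|\nabla u\|_q^q$, $\int|x|^{-b}|\sqrt c\,u|^p\,dx=c^{p/2}\int|x|^{-b}|u|^p\,dx$, one obtains
\[
I((v_c)_t)=\tfrac{A}{2}\,c\,t^{2}+\tfrac{B}{q}\,c^{q/2}\,t^{\alpha}-\tfrac{C}{p}\,c^{p/2}\,t^{\beta},\qquad \alpha:=\tfrac{N(q-2)+2q}{2},\quad\beta:=\tfrac{N(p-2)+2b}{2},
\]
with $A=\|\nabla u\|_2^2$, $B=\|\nabla u\|_q^q$, $C=\int|x|^{-b}|u|^p\,dx>0$ fixed, and $2<\alpha<\beta$ (because $q>2$ and $p>p_q^*$). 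The key step is to bound the maximum over $t$ by absorbing the two positive monomials into the negative one via Young's inequality, with conjugate exponents $(\tfrac{\beta}{2},\tfrac{\beta}{\beta-2})$ and $(\tfrac{\beta}{\alpha},\tfrac{\beta}{\beta-\alpha})$: choosing the weights so that the two resulting $t^{\beta}$-coefficients add up to $\tfrac{C}{p}c^{p/2}$, one gets constants $K_1,K_2>0$ depending only on $u,N,p,q,b$ with
\[
\tfrac{A}{2}c\,t^{2}+\tfrac{B}{q}c^{q/2}\,t^{\alpha}\ \le\ \tfrac{C}{p}c^{p/2}\,t^{\beta}+K_1\,c^{\theta_1}+K_2\,c^{\theta_2}\qquad(\forall\,t>0),
\]
where
\[
\theta_1=\frac{(N-2)(p-2)-2(2-b)}{N(p-2)-2(2-b)},\qquad \theta_2=\frac{q[N(p-2)+2b]-p[N(q-2)+2q]}{2\big([N(p-2)+2b]-[N(q-2)+2q]\big)}.
\]
Thus $I((v_c)_t)\le K_1c^{\theta_1}+K_2c^{\theta_2}$ for all $t>0$, hence $\gamma(c)\le K_1c^{\theta_1}+K_2c^{\theta_2}$.

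It then remains to check $\theta_1<0$ and $\theta_2<0$, and this is exactly where the standing hypotheses are used. Both denominators are positive: $p>p_q^*$ and $q>2$ force $\beta>2$ and $\beta>\alpha$, equivalently $N(p-2)-2(2-b)>0$ and $[N(p-2)+2b]-[N(q-2)+2q]>0$. The numerator of $\theta_1$ is $(N-2)(p-2)-2(2-b)$, which is $\le-2(2-b)<0$ when $N\le2$ and is $<0$ when $N\ge3$ precisely because $p<2_b^*=\tfrac{2(N-b)}{N-2}$; so $\theta_1<0$ in both cases of the lemma. The numerator of $\theta_2$ is negative iff $q[N(p-2)+2b]<p[N(q-2)+2q]$, which after cancellation is $N(p-q)<q(p-b)$, i.e. $p(N-q)<q(N-b)$; this is immediate when $q\ge N$ and is equivalent to $p<\tfrac{q(N-b)}{N-q}$ when $q<N$, hence it holds under $p<\tfrac{q(N-b)}{(N-q)^+}$. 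Therefore $K_1c^{\theta_1}+K_2c^{\theta_2}\to0$ as $c\to\infty$, which yields $\lim_{c\to\infty}\gamma(c)=0$.

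For the last statement, under the lemma's hypotheses Proposition \ref{prop5.1} provides a minimizer $u_c$ of $\gamma(c)$, so $I(u_c)=\gamma(c)\to0$; since $P(u_c)=0$ (Lemma \ref{lm5.0}), identity \eqref{5.5} gives
\[
I(u_c)=\frac{Np-2(N+2)+2b}{2[N(p-2)+2b]}\|\nabla u_c\|_2^2+\frac{Np-q(N+2)+2b}{q[N(p-2)+2b]}\|\nabla u_c\|_q^q,
\]
with both coefficients strictly positive (again because $p>p_q^*$), whence $\|\nabla u_c\|_2\to0$ and $\|\nabla u_c\|_q\to0$. The only real work is in the second step: one must split the two positive terms with the correct $c$-dependent weights so that their $t^{\beta}$-parts cancel against $-\tfrac{C}{p}c^{p/2}t^{\beta}$, and then verify that the residual powers of $c$ are exactly $\theta_1,\theta_2$; no compactness or elliptic-regularity input beyond what is already available is needed.
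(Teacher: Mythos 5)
Your proof is correct, and it reaches the conclusion by a genuinely different route from the paper. The paper tests $\gamma(c)$ against the dilated family $u_0^{t_0}=t_0u_0(t_0^{-1}x)$ with $t_0=c^{\frac{1}{N+2}}$, locates the unique Pohozaev point $t_c$ with $P((u_0^{t_0})_{t_c})=0$, and then extracts the asymptotics of $t_c$ from the constraint (its \eqref{5.193}--\eqref{5.195}) to show $c^{\frac{N}{N+2}}t_c^2\to 0$; the hypothesis enters through the sign of $\beta=\frac{N+2}{2N}\left[(N-2)p-2(N-b)\right]$. You instead take the amplitude-scaled family $\sqrt{c}\,u$, invoke the minimax identity \eqref{5.5.1}, and absorb the two positive monomials in $t$ into the negative one by Young's inequality with $c$-dependent weights, so that the maximum over $t$ is bounded by $K_1c^{\theta_1}+K_2c^{\theta_2}$ without ever computing the maximizer. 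I checked the exponents: $\theta_1=\frac{\beta-p}{\beta-2}$ and $\theta_2=\frac{q\beta-p\alpha}{2(\beta-\alpha)}$ in your notation, both denominators are positive because $p>p_q^*$, the numerator of $\theta_1$ equals $(N-2)p-2(N-b)$ (the same quantity as the paper's $\beta$, negative exactly under hypotheses (1)--(2)), and the numerator of $\theta_2$ is negative precisely when $p<\frac{q(N-b)}{(N-q)^+}$, which is standing. Your argument thus buys an explicit algebraic decay rate $\gamma(c)=O(c^{\theta_1}+c^{\theta_2})$, which is slightly more quantitative than the paper's qualitative limit, at the cost of a less transparent link to the geometry of the fibre map $t\mapsto I(u_t)$. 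The final step deducing \eqref{5.190} from $P(u_c)=0$ and \eqref{5.5} is identical to the paper's.
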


\begin{proof}
Let $u_0\in S(1)\cap L^{\infty}(\R^N)$, and $u_0^{t_0}:=t_0u(t_0^{-1}x)$ with $t_0=c^{\frac{1}{N+2}}$, thus $u_0^{t_0}\in S(c)$. From Lemma \ref{lm5.2}, there exists a unique $t_c>0$, such that $P((u_0^{t_0})_{t_c})=0$, then using \eqref{scaling1} and \eqref{3.11} we have
\begin{align}\label{5.191}
0<\gamma(c)\leq I((u_0^{t_0})_{t_c})\leq \frac{c^{\frac{N}{N+2}}t_c^{2}}{2}\|\nabla u_0\|_2^{2}+\frac{c^{\frac{N}{N+2}}t_c^{\frac{N(q-2)+2q}{2}}}{q}\|\nabla u_0\|_q^{q}.
\end{align}
Clearly, if there holds that 
\begin{align}\label{5.192}
\lim\limits_{c\to \infty}c^{\frac{N}{N+2}}t_c^{2}=0. 
\end{align}
then $\lim\limits_{c\to \infty}\gamma(c)=0$ follows immediately from \eqref{5.191}. Now, we verify \eqref{5.192} as $p$ satisfies the conditions given in this lemma. Indeed, using  $P((u_0^{t_0})_{t_c})=0$, we have
\begin{align*}
c^{\frac{N}{N+2}}t_c^{2}\|\nabla u_0\|_2^{2} + c^{\frac{N}{N+2}}t_c^{\frac{N(q-2)+2q}{2}}\frac{N(q-2)+2q}{2q}\|\nabla u_0\|_q^{q}\\
=c^{\frac{p-b+N}{N+2}}t_c^{\frac{N(p-2)+2b}{2}}\frac{N(p-2)+2b}{2p}\int\frac{|u_0|^{p}}{|x|^{b}}dx,
\end{align*}
which implies the following two equalities:
\begin{align}\label{5.193}
\|\nabla u_0\|_2^{2} + t_c^{\frac{N(q-2)+2q-4}{2}}\frac{N(q-2)+2q}{2q}\|\nabla u_0\|_q^{q}=c^{\frac{p-b}{N+2}}t_c^{\frac{N(p-2)+2b-4}{2}}\frac{N(p-2)+2b}{2p}\int\frac{|u_0|^{p}}{|x|^{b}}dx.
\end{align}
\begin{align}\label{5.194}
t_c^{-\frac{N(p-2)+2b-4}{2}}\|\nabla u_0\|_2^{2} + t_c^{-\frac{N(p-p_q^*)}{2}}\frac{N(q-2)+2q}{2q}\|\nabla u_0\|_q^{q}=c^{\frac{p-b}{N+2}}\frac{N(p-2)+2b}{2p}\int\frac{|u_0|^{p}}{|x|^{b}}dx.
\end{align}

From \eqref{5.194}, we observe that when $p>p_q^*$, then $t_c\to 0$ as $c\to \infty$. Thus we deduce from \eqref{5.193} that 
$$\lim\limits_{c\to \infty}c^{\frac{p-b}{N+2}}t_c^{\frac{N(p-2)+2b-4}{2}}=C_1>0.$$
or equivalently 
\begin{eqnarray}\label{5.195}
\lim\limits_{c\to \infty}\Big(c^{\frac{N}{N+2}}t_c^2\Big)^{\frac{p-b}{N}}\cdot t_c^{\beta}=C_1>0,
\end{eqnarray}
with $\beta:=\frac{N+2}{2N}[(N-2)p-2(N-b)]$, $C_1>0$ independent of $c$. Since we have proved that $t_c\to 0$ as $c\to \infty$, hence, if $\beta<0$, then  \eqref{5.195} implies \eqref{5.192}. By the definition of $\beta$, it is trivial that $\beta<0$ provides that $p$ satisfies the assumption of the lemma. 

Having proved that $\lim\limits_{c\to \infty}\gamma(c)=0$, then \eqref{5.190} follows immediately from \eqref{5.5}. Thus, the proof is completed.
\end{proof}

\begin{remark}
As for the case that $N\geq 3$ and $p=2_b^*$, we observe from \eqref{5.195} that $\lim\limits_{c\to \infty} c^{\frac{N}{N+2}}t_c^2=C_2$, with $C_2>0$ independent of $c>0$. Thus using  $P((u_0^{t_0})_{t_c})=0$ we can calculate that $\lim\limits_{c\to \infty}I((u_0^{t_0})_{t_c})=\widetilde{C_2}>0$, but nothing can be concluded from this and \eqref{5.191}. 
\end{remark}


Fanally, we prove that a minimizer of $\gamma(c)$ is indeed a critical point of $I(u)$ on $S(c)$. Namely the constraint ``$P(u)=0$" is referred as a natural constraint. 
\begin{lemma}\label{lm5.6}
Assume that $p_q^*<p<\frac{q(N-b)}{(N-q)^+}$. Then any critical point of $I|_{V(c)}$ is exactly a critical point of $I|_{S(c)}$.
\end{lemma}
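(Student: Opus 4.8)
The plan is to run the usual natural-constraint scheme: apply the Lagrange multiplier rule on the manifold $V(c)$, and then use the $L^2$-preserving dilation $u_t(x):=t^{\frac N2}u(tx)$ to force the multiplier attached to the constraint ``$P=0$'' to vanish. Fix $u\in V(c)$ that is a critical point of $I|_{V(c)}$; since $c>0$ we have $u\neq0$, and by \eqref{5.4} in the proof of Lemma \ref{lm5.1} also $\|\nabla u\|_q>0$.

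First I would check that the constraint is non-degenerate on $S(c)$ at $u$. Put $w:=\frac N2 u+x\cdot\nabla u=\frac{d}{dt}u_t\big|_{t=1}$. Since $\|u_t\|_2^2\equiv\|u\|_2^2$, one has $\langle u,w\rangle_{L^2}=0$, so $w$ is tangent to $S(c)$ at $u$; and differentiating the explicit expression \eqref{5.6} for $P(u_t)$ at $t=1$ and simplifying with $P(u)=0$ exactly as in the proof of Lemma \ref{lm5.8},
\begin{equation*}
\langle P'(u),w\rangle=\frac{d}{dt}P(u_t)\Big|_{t=1}=\frac N2\Big(\frac{2(2-b)}{N}+2-p\Big)\|\nabla u\|_2^2+\frac{N[N(q-2)+2q]}{4q}(p_q^*-p)\|\nabla u\|_q^q<0,
\end{equation*}
because $p>p_q^*$ and, as $q>2$, $p_q^*>\frac{2(2-b)}{N}+2$, while $\|\nabla u\|_q>0$. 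Hence $d(P|_{S(c)})(u)\neq0$, so near $u$ the set $V(c)$ is a $C^1$-hypersurface of $S(c)$ with tangent space $\ker d(P|_{S(c)})(u)$, and the finite-codimension Lagrange multiplier rule on $S(c)$ provides $\mu\in\R$ with $(I|_{S(c)})'(u)=\mu\,(P|_{S(c)})'(u)$.

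Next I would kill $\mu$ by evaluating this identity on $w$. Using $\langle u,w\rangle_{L^2}=0$ together with \eqref{Qt} and \eqref{scaling1}, one gets $(I|_{S(c)})'(u)[w]=\langle I'(u),w\rangle=\frac{d}{dt}I(u_t)\big|_{t=1}=P(u)=0$, whereas $(P|_{S(c)})'(u)[w]=\langle P'(u),w\rangle=\frac{d}{dt}P(u_t)\big|_{t=1}<0$ by the previous step. So $0=\mu\cdot\frac{d}{dt}P(u_t)|_{t=1}$ forces $\mu=0$, i.e. $(I|_{S(c)})'(u)=0$ and $u$ is a critical point of $I|_{S(c)}$. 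The reverse inclusion is immediate: if $(I|_{S(c)})'(u)=0$, then $I'(u)=\lambda u$ for some $\lambda$, hence $(\lambda,u)$ solves Eq. \eqref{eq1.1}, so $P(u)=0$ by Lemma \ref{lm5.0}, i.e. $u\in V(c)$, and the already-vanishing differential still vanishes on $T_uV(c)\subset T_uS(c)$.

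The one delicate point is that all of this uses $w=\frac N2 u+x\cdot\nabla u\in X$, which is not automatic from $u\in X$. I would handle it in one of two ways. Either (i) note that the Lagrange relation makes $u$ a weak solution of $-(1-2\mu)\triangle u-\bigl(1-\mu\tfrac{N(q-2)+2q}{2}\bigr)\triangle_q u=\lambda u+\bigl(1-\mu\tfrac{N(p-2)+2b}{2}\bigr)|x|^{-b}|u|^{p-2}u$, so by the $C^{1,\alpha}_{loc}$-regularity of $(2,q)$-Laplacian solutions and the decay established in the appendix, $x\cdot\nabla u\in X$; or (ii) bypass $w$ entirely by repeating the computation of Lemma \ref{lm5.0} for that perturbed equation (testing with $u$ and with $x\cdot\nabla u$, both legitimate for weak solutions), which yields $A\|\nabla u\|_2^2+B\tfrac{N(q-2)+2q}{2q}\|\nabla u\|_q^q-D\tfrac{N(p-2)+2b}{2p}\int|x|^{-b}|u|^p\,dx=0$ with $A=1-2\mu$, $B=1-\mu\tfrac{N(q-2)+2q}{2}$, $D=1-\mu\tfrac{N(p-2)+2b}{2}$; subtracting $A$ times $P(u)=0$ and then using $P(u)=0$ once more to eliminate $\int|x|^{-b}|u|^p\,dx$ leaves $\mu\bigl(c_1\|\nabla u\|_2^2+c_2\|\nabla u\|_q^q\bigr)=0$ with $c_1,c_2>0$ (again because $p>p_q^*$ and $q>2$), forcing $\mu=0$. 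I expect this last verification — the regularity input in route (i), or the bookkeeping of route (ii) — to be the only genuine work; the remainder is the routine natural-constraint argument.
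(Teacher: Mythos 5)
Your proposal is correct and, in substance, identical to the paper's proof: the paper also applies the Lagrange multiplier rule to get $I'(u)-\lambda u-\mu P'(u)=0$, then tests the resulting perturbed equation with $u$ and $x\cdot\nabla u$ to derive a Pohozaev-type identity which, after using $P(u)=0$, reduces to $\mu\bigl(c_1\|\nabla u\|_2^2+c_2\|\nabla u\|_q^q\bigr)=0$ with $c_1,c_2>0$ for $p>p_q^*$ — exactly your route (ii). Your primary presentation via the dilation generator $w=\tfrac N2u+x\cdot\nabla u$ is the same computation repackaged, and you correctly identify the $x\cdot\nabla u\in X$ issue and resolve it the way the paper (implicitly) does.
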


\begin{proof}
Suppose that $u$ is a critical point of $I \mid _{V(c)}$, then by the Lagrange multiplier theory, there exist $\lambda\in \R$ and $\mu\in \R$, such that
\begin{equation}\label{5.20}
 I'(u) - \lambda u -\mu P'(u) = 0 \quad in \  X^*.
\end{equation}
Then we only need to prove that $\mu=0$. Note that \eqref{5.20} is equivalent to
\begin{equation}\label{5.21}
 (2\mu -1)\triangle u + \Big[\mu\frac{N(q-2)+2q}{2}-1\Big] \triangle_q u + \Big[\mu\frac{N(p-2)+2b}{2}-1\Big] \frac{|u|^{p-2}u}{|x|^b} - \lambda u =0.
\end{equation}
Multiplying \eqref{5.21} by $u$ and $x\cdot \nabla u$ respectively, and then integrating by part, we have
\begin{equation}\label{5.22}
\begin{split}
(2\mu -1)\|\nabla u \|_2^2 &+\Big [\mu\frac{N(q-2)+2q}{2}-1\Big]\|\nabla u\|_q^q \\
 &-\Big[\mu\frac{N(p-2)+2b}{2}-1\Big]\int \frac{|u|^{p}}{|x|^b}dx + \lambda \|u \|_2^2 = 0,
\end{split}
\end{equation}

\begin{equation}\label{5.23}
\begin{split}
 \frac{(2\mu -1)(N-2)}{2}\|\nabla u \|_2^2 +\Big [\mu\frac{N(q-2)+2q}{2}-1\Big]\frac{N-q}{q}\|\nabla u\|_q^q \\
 +\Big[\mu\frac{N(p-2)+2b}{2}-1\Big] \frac{b-N}{p} \int \frac{|u|^{p}}{|x|^b}dx + \frac{N\lambda}{2} \|u \|_2^2 = 0.
\end{split}
\end{equation}
From \eqref{5.22} and \eqref{5.23}, we have
\begin{equation}\label{5.24}
\begin{split}
\mu\Big[4\|\nabla u \|_2^2 &+\frac{N(q-2)+2q}{2}\cdot \frac{N(q-2)+2q}{q}\|\nabla u\|_q^q \\
 &-\frac{N(p-2)+2b}{2}\cdot\frac{N(p-2)+2b}{p}\int \frac{|u|^{p}}{|x|^b}dx\Big]-2P(u)=0.
\end{split}
\end{equation}
Since $P(u)=0$, then \eqref{5.24} is reduced as 
\begin{equation}\label{5.25}
\mu\Big[(p-\frac{4-2b}{N}-2)\|\nabla u \|_2^2 +\frac{N(q-2)+2q}{2}\cdot (p-p_q^*)\|\nabla u\|_q^q\Big]=0.
\end{equation}
Since $p_q^*=\frac{2(q-b)}{N}+q>\frac{2(2-b)}{N}+2$ as $q>2$, then when $p>p_q^*$, we conclude from \eqref{5.25} that $\mu = 0$.
\end{proof}

\subsection{Infinitely many bound state solutions on $S(c)$}

As a byproduct of the above lemmas in the mass-supercritical case, inspired by the work of Bartsch et.al \cite{BV}, we shall use the fountain theorem to prove the existence of infinitely many bound state normalized solutions of \eqref{eq1.1}. Similar results also have been established for the Schr\"odinger-Poisson-Slater equations in \cite{Luo}, for a class of nonlinear Choquard equations in \cite{BLL}, and recently for a type of $(2,q)$-Laplacian Equation with homogenous nonlinearity in \cite{BY}.

Let $\{V_n\}$ be a strictly increasing sequence of finite-dimensional linear subspaces in $X$, such that $\bigcup_n V_n$ is dense in $X$. We denote by $V_{n}^{\perp}$ the orthogonal space of $V_n$ in $H_r^1(\R^3)$. Then

\begin{lemma}\label{lm5.7}
Assume that $p_q^*<p<\frac{q(N-b)}{(N-q)^+}$. Then there holds that
\begin{equation*}
   \mu_n:= \inf_{u \in V_{n-1}^\bot} \frac{\|\nabla u\|_2^2+\|\nabla u\|_q^q+\|u\|_2^2}{(\int\frac{|u|^p}{|x|^b}dx)^{\frac{2}{p}}} = \inf_{u \in V_{n-1}^\bot} \frac{\|u\|_X^2}{ (\int\frac{|u|^p}{|x|^b}dx)^{\frac{2}{p}} }\rightarrow +\infty, \ \mbox{as}\ n \rightarrow +\infty.
\end{equation*}
\end{lemma}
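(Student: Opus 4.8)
The plan is to argue by contradiction, exploiting the compact embedding of radially symmetric functions combined with the density of $\bigcup_n V_n$. Suppose the conclusion fails: then $\mu_n$ is bounded, say $\mu_n \leq M$ for all $n$ along a subsequence (recall $\mu_n$ is non-decreasing in $n$, since $V_{n-1}^\perp \supset V_n^\perp$, so in fact it suffices to assume $\lim_n \mu_n = L < +\infty$). For each $n$ choose a near-minimizer $u_n \in V_{n-1}^\perp$; by homogeneity of the quotient we may normalize so that $\int \frac{|u_n|^p}{|x|^b}\,dx = 1$, whence $\|u_n\|_X^2 \leq \mu_n + o_n(1) \leq M + 1$. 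Thus $\{u_n\}$ is bounded in $X$, and up to a subsequence $u_n \rightharpoonup u$ weakly in $X$.

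The key observation is that $u = 0$. Indeed, fix any $w \in \bigcup_m V_m$, so $w \in V_{m_0}$ for some $m_0$; then for all $n > m_0$ we have $u_n \perp w$ in $X$ (with respect to the inner product inducing $\|\cdot\|_X$), so $\langle u, w\rangle_X = \lim_n \langle u_n, w\rangle_X = 0$. Since $\bigcup_m V_m$ is dense in $X$, this forces $u = 0$. Now I would invoke the compactness built into Lemma \ref{lm_ap1} (the same compactness used repeatedly above to pass to the limit in the weighted $L^p$ term): since $u_n \rightharpoonup 0$ in $X$, one gets $\int \frac{|u_n|^p}{|x|^b}\,dx \to \int \frac{|u|^p}{|x|^b}\,dx = 0$. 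But this contradicts the normalization $\int \frac{|u_n|^p}{|x|^b}\,dx = 1$. Hence $\mu_n \to +\infty$.

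The main obstacle is the applicability of Lemma \ref{lm_ap1} in this setting: that compactness lemma is presumably stated for radial functions (or uses the symmetric-decreasing rearrangement / the weight $|x|^{-b}$ to gain compactness), whereas here the $u_n$ live in the subspaces $V_{n-1}^\perp \subset X$, which need not consist of radial functions. To handle this cleanly I would either (i) take $X$ and the $V_n$ from the outset within the radial subspace $X_r := H^1_r(\R^N)\cap D^{1,q}_r(\R^N)$ — which is the natural setting for the fountain-theorem argument that follows and is consistent with the notation $V_{n-1}^\perp$ "in $H^1_r$" used just before the lemma — so that $\{u_n\}$ is a bounded sequence of radial functions and Lemma \ref{lm_ap1} applies directly; or (ii) if one insists on the full space, replace the weak-convergence/compactness step by the quantitative bound $\int \frac{|u|^p}{|x|^b}\,dx \leq \mathcal{K}_{N,p,q}\|\nabla u\|_q^{\sigma_{p,q}}\|u\|_2^{p-\sigma_{p,q}}$ from Lemma \ref{th-main}, which shows $1 \leq \mathcal{K}_{N,p,q}\|u_n\|_X^p$ and only gives a lower bound on $\|u_n\|_X$, hence does not by itself close the argument — confirming that the radial compactness is genuinely needed. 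I would therefore adopt option (i): restrict to radial functions, note that $u_n \rightharpoonup 0$ in $X_r$ together with the compact embedding $X_r \hookrightarrow\hookrightarrow L^p(\R^N;|x|^{-b}dx)$ (Lemma \ref{lm_ap1}) yields $\int \frac{|u_n|^p}{|x|^b}\,dx \to 0$, contradicting the normalization, and conclude.
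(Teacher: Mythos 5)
Your proof is correct and follows essentially the same route as the paper's: argue by contradiction with near-minimizers normalized so that $\int |x|^{-b}|u_n|^p\,dx=1$, show the weak limit vanishes using the density of $\bigcup_n V_n$ and the orthogonality $u_n\in V_{n-1}^{\perp}$, and conclude via the compactness of the weighted $L^p$ term (Lemma \ref{lm_ap1}), contradicting the normalization. Your hesitation about radial symmetry is unnecessary here: the compactness in Lemma \ref{lm_ap1} does not come from radiality but from the decay of the weight, since the tail is controlled by $R^{-b}\int|u_n-u|^p\,dx$ while the part on $\{|x|\le R\}$ is handled by local Rellich compactness, so the lemma applies to arbitrary bounded sequences in $X$ and the paper invokes it in exactly this non-radial setting.
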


\begin{proof}
We apply the same idea from \cite[Lemma 2.1]{BV} to prove this lemma. Indeed, if we argue by contradiction to suppose that there exists $\{u_n\}\subset X$, such that $u_n\in V_{n-1}^{\perp}$, $\int \frac{|u_n|^p}{|x|^b}dx=1$, $\|u_n\|_X=l<+\infty$. Then, up to a subsequence, there exists $u_0\in X$, such that $u_n\rightharpoonup u_0$ in $X$. By Lemma \ref{lm_ap1}, $\int \frac{|u_n|^p}{|x|^b}dx \to \int \frac{|u_0|^p}{|x|^b}dx=1$. Since $\bigcup_n V_n$ is dense in $X$ and $V_n$ is strictly increasing, then let $v\in X$ and $\{v_n\}\subset V_{n-1}^{\perp}$, such that $v_n \to v$ in $X$. Thus,
$$|\langle u_n, v \rangle_X|\leq |\langle u_n, v-v_n \rangle_X|+|\langle u_n, v_n \rangle_X|\leq \|u_n\|_X\cdot \|v-v_n\|_X\to 0.$$
Hence, $u_n\rightharpoonup 0=u_0$ in $X$, which is impossible, since we have $\int \frac{|u_0|^p}{|x|^b}dx=1$.
\end{proof}

Now, for any $n\in \mathbb{N}$, we define
$$\rho_n:=L^{-\frac{2}{p-2}}\cdot \mu_n^{\frac{p}{p-2}},\ \mbox{with}\ L:=\max_{x,y>0}\dfrac{(x+y+1)^p}{x^p+y^p+1},$$
and 
$$B_n:=\{u\in S(c)\cap V_{n-1}^{\perp} : \ \|\nabla u\|_2^2+\|\nabla u\|_q^q=\rho_n\}.$$

Denote 
$$b_n:=\inf_{u\in B_n}I(u).$$
Then, we have
\begin{lemma}\label{lm5.9}
Assume that $p_q^*<p<\frac{q(N-b)}{(N-q)^+}$. Then $b_n\to +\infty$ as $n\to \infty$. Without loss of generality, we can assume that $b_n \geq 1$ for all $n \in \N$.
\end{lemma}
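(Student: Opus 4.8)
The plan is to show that on the constraint set $B_n$ the potential term $\int |x|^{-b}|u|^p\,dx$ becomes negligible compared with the kinetic term, forcing $I(u)$ to grow with $n$. First I would estimate the nonlinear term: for $u\in B_n\subset V_{n-1}^{\perp}$ with $\|u\|_2^2=c$ and $\|\nabla u\|_2^2+\|\nabla u\|_q^q=\rho_n$, the definition of $L$ gives
\begin{equation*}
\Big(\int\frac{|u|^p}{|x|^b}dx\Big)^{\frac{2}{p}}\leq \mu_n^{-1}\|u\|_X^2 = \mu_n^{-1}\big(\|\nabla u\|_2^2+\|\nabla u\|_q^q+c\big)=\mu_n^{-1}(\rho_n+c),
\end{equation*}
hence $\int\frac{|u|^p}{|x|^b}dx\leq \mu_n^{-p/2}(\rho_n+c)^{p/2}$. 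With the specific choice $\rho_n:=L^{-2/(p-2)}\mu_n^{p/(p-2)}$, one checks that $\mu_n^{-p/2}\rho_n^{p/2}=L^{-p/(p-2)}\mu_n^{p/(p-2)}=L^{-1}\rho_n$, so for $n$ large (so that $c\leq\rho_n$, which is legitimate since $\mu_n\to+\infty$ by Lemma \ref{lm5.7} forces $\rho_n\to+\infty$) we get $\int\frac{|u|^p}{|x|^b}dx\leq C\rho_n$ for an absolute constant $C$; in fact a cleaner route is to absorb the $+c$ into the $L$-factor directly, writing $(\rho_n+c)^{p/2}\le L(\rho_n^{p/2}+c^{p/2}+\,\cdots)$ and using $\rho_n\to\infty$, to conclude $\frac1p\int\frac{|u|^p}{|x|^b}dx = o(\rho_n)$ uniformly on $B_n$ as $n\to\infty$.

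Next I would bound $I$ from below on $B_n$. Since $q>2$, for $u\in B_n$ either $\|\nabla u\|_2^2\geq\rho_n/2$ or $\|\nabla u\|_q^q\geq\rho_n/2$; in the first case $I(u)\geq\frac12\|\nabla u\|_2^2-\frac1p\int\frac{|u|^p}{|x|^b}dx\geq\frac{\rho_n}{4}-o(\rho_n)$, and in the second case $I(u)\geq\frac1q\|\nabla u\|_q^q-\frac1p\int\frac{|u|^p}{|x|^b}dx\geq\frac{\rho_n}{2q}-o(\rho_n)$. In either case $I(u)\geq c_0\rho_n - o(\rho_n)$ for a fixed $c_0>0$, uniformly over $u\in B_n$, so $b_n=\inf_{B_n}I\geq c_0\rho_n-o(\rho_n)\to+\infty$ since $\rho_n\to+\infty$.

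The main thing to be careful about—and the only genuine obstacle—is verifying that $B_n$ is nonempty for all large $n$, so that $b_n$ is a meaningful infimum, and that the $+c$ term genuinely does not spoil the estimate; both reduce to the fact that $\rho_n\to+\infty$, which in turn follows from $\mu_n\to+\infty$ (Lemma \ref{lm5.7}) and $p>2$. Nonemptiness of $B_n$: fix any $0\neq u\in S(c)\cap V_{n-1}^\perp$ (which exists since $V_{n-1}^\perp$ is infinite-dimensional and closed under the scaling $u\mapsto t^{N/2}u(tx)$ preserving the $L^2$-norm—or simply normalize any nonzero element of $V_{n-1}^\perp$); the map $t\mapsto\|\nabla u_t\|_2^2+\|\nabla u_t\|_q^q = t^2\|\nabla u\|_2^2+t^{(N(q-2)+2q)/2}\|\nabla u\|_q^q$ is continuous, vanishes at $t=0^+$ and tends to $+\infty$, hence attains the value $\rho_n$, giving a point of $B_n$. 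Finally, the reduction to $b_n\geq 1$ is immediate once $b_n\to+\infty$: discard the finitely many indices with $b_n<1$ (equivalently, relabel), which does not affect the fountain-theorem argument. The proof is completed.
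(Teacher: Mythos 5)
Your overall strategy coincides with the paper's (control the nonlinear term via the definition of $\mu_n$, then exploit the calibration of $\rho_n$), but the key quantitative claim you rely on --- that $\frac1p\int\frac{|u|^p}{|x|^b}\,dx=o(\rho_n)$ uniformly on $B_n$ --- is false, and your own computation already shows this: you correctly find $\mu_n^{-p/2}\rho_n^{p/2}=L^{-1}\rho_n$, so the bound you obtain for the nonlinear term is of size $\frac{1}{pL}\rho_n(1+o(1))$, i.e.\ \emph{proportional} to $\rho_n$, not negligible compared with it. The whole point of the choice $\rho_n=L^{-2/(p-2)}\mu_n^{p/(p-2)}$ is that the nonlinear and kinetic terms are of the same order $\rho_n$, and one wins only through a strict inequality between coefficients. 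The paper's estimate makes this precise without any dichotomy: since $q>2$ one has $\frac12\|\nabla u\|_2^2+\frac1q\|\nabla u\|_q^q\ge\frac1q\rho_n$, while, using the constant $L$ and $\|\nabla u\|_2^{p}+\|\nabla u\|_q^{pq/2}\le\rho_n^{p/2}$,
$$\frac{1}{p\,\mu_n^{p/2}}\bigl(\|\nabla u\|_2^2+\|\nabla u\|_q^q+c\bigr)^{p/2}\le\frac{L}{p\,\mu_n^{p/2}}\bigl(\|\nabla u\|_2^{p}+\|\nabla u\|_q^{pq/2}+c^{p/2}\bigr)\le\frac{1}{p}\rho_n+\frac{Lc^{p/2}}{p\,\mu_n^{p/2}},$$
so that $I(u)\ge(\tfrac1q-\tfrac1p)\rho_n-o(1)\to+\infty$ precisely because $p>q$. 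Your dichotomy could be repaired by tracking the constant explicitly ($L=3^{p-1}$, whence $\frac{1}{pL}<\frac{1}{2q}$), but as written the step ``$I(u)\ge\frac{\rho_n}{2q}-o(\rho_n)$'' does not follow from what you established.

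A secondary issue: your nonemptiness argument for $B_n$ assumes that $V_{n-1}^\perp$ is invariant under the mass-preserving scaling $u\mapsto t^{N/2}u(tx)$, which need not hold for an arbitrary increasing sequence of finite-dimensional subspaces $V_n$. Fortunately, nonemptiness is not required for the statement of this lemma (an infimum over the empty set is $+\infty$, which is consistent with the conclusion); it only becomes relevant in the subsequent intersection lemma, where the relevant points of $B_n$ are produced by the min-max construction rather than by scaling inside $V_{n-1}^\perp$.
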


\begin{proof}
For any $u\in B_n$, we have that
\begin{align*}
I(u) &= \frac{1}{2}\|\nabla u\|_2^{2}+\frac{1}{q}\|\nabla u\|_q^{q}-\frac{1}{p} \int\frac{|u|^{p}}{|x|^{b}}dx\\
     &\geq \frac{1}{2}\|\nabla u\|_2^{2}+\frac{1}{q}\|\nabla u\|_q^{q}-\frac{1}{p\mu_n^{\frac{p}{2}}} \Big(\|\nabla u\|_2^{2}+\|\nabla u\|_q^{q}+c\Big)^{\frac{p}{2}}\\
     &\geq \frac{1}{2}\|\nabla u\|_2^{2}+\frac{1}{q}\|\nabla u\|_q^{q}-\frac{L}{p\mu_n^{\frac{p}{2}}} \Big(\|\nabla u\|_2^{p}+\|\nabla u\|_q^{\frac{pq}{2}}+c^{\frac{p}{2}}\Big)\\
     &\geq (\frac{1}{q}-\frac{1}{p})\rho_n-\dfrac{Lc^{\frac{p}{2}}}{p\mu_n^{\frac{p}{2}}}.
\end{align*}
Then $b_n\geq (\frac{1}{q}-\frac{1}{p})\cdot L^{-\frac{2}{p-2}}\cdot \mu_n^{\frac{p}{p-2}}-\dfrac{Lc^{\frac{p}{2}}}{p\mu_n^{\frac{p}{2}}}$, which verifies from Lemma \ref{lm5.7} that $b_n\to +\infty$ as $n\to \infty$. Thus, for some $n_0>0$ large enough, $b_n \geq 1$ for any $n \geq n_0$. We consider the sequence beginning from $n_0$, then the proof is finished.
\end{proof}

To set up the min-max scheme of the functional on $V_n$, we introduce the map
\begin{eqnarray*}\label{kappa}
\kappa: X\times\R &\longrightarrow& X \nonumber \\
(u,\ \theta) &\longmapsto& \kappa(u,\theta)(x):= e^{\frac{N}{2}\theta}u(e^{\theta} x).
\end{eqnarray*}
Then, for any given $u\in S(c)$, we have $\kappa(u,\theta)\in S(c)$ for all $\theta\in \R$. Denote 
$$T(u):=\|\nabla u\|_2^2+\|\nabla u\|_q^q,$$ 
and let $t=e^{\theta}$ in \eqref{scaling1}, then we observe from \eqref{scaling1} that, when $p_q^*<p<\frac{q(N-b)}{(N-q)^+}$, then 
\begin{equation}\label{behaviscaling}
\left\{
\begin{array}{l}
T(\kappa(u,\theta))\to 0,\quad I(\kappa(u,\theta))\to 0,\quad \ \mbox{ as } \theta\to -\infty, \\
T(\kappa(u,\theta))\to +\infty,\ I(\kappa(u,\theta))\to -\infty,\ \mbox{ as } \theta\to +\infty.
\end{array}
\right.
\end{equation}
Thus, for each $n \in \N$, there exists $\theta_n>0$ large enough, such that
\begin{eqnarray}\label{g_n}
\bar{g}_n:\ [0,1]\times (S(c)\cap V_n) \to S(c), \ \  \bar{g}_n(t, u) = \kappa (u, (2t-1)\theta_n)
\end{eqnarray}
satisfies
\begin{equation}\label{pro-g_n}
\left\{
\begin{array}{l}
T(\bar{g}_n(0,u))< \rho_n<T(\bar{g}_n(1,u)),\\
\max\limits_{u\in S(c)\cap V_n}\{I(\bar{g}_n(0,u)), I(\bar{g}_n(1,u)) \}< 1\leq b_n.
\end{array}
\right.
\end{equation}
\medskip

Now we define
\begin{align*}\label{declass}
\begin{split}
\Gamma_n: =\Big\{g:\ &[0,1]\times (S(c)\cap V_n) \to S(c)\ |\ g \mbox{ is continuous, odd in } u \\
&\mbox{and such that } \forall u:\ g(0,u)= \bar{g}_n(0,u),\ g(1,u)=\bar{g}_n(1,u)\Big\}.
\end{split}
\end{align*}
Clearly $\bar{g}_n \in \Gamma_n$. Now we give the following intersection lemma.
\begin{lemma}\label{lm5.10}
Assume that $p_q^*<p<\frac{q(N-b)}{(N-q)^+}$. Then for each $n\in \N$,
\begin{eqnarray}\label{gamma_n}
\gamma_n(c) := \inf_{g \in \Gamma_n}\max\limits_{\substack{0\leq t\leq 1 \\ u\in S(c)\cap V_n}}I(g(t,u))\geq b_n.
\end{eqnarray}
In particular, 
\begin{equation}\label{gamma_n1}
\gamma_n(c) \geq b_n >  \max \Big\{ \max_{u\in S(c)\cap V_n} I(g(0,u)),  \max_{u\in S(c)\cap V_n} I(g(1,u))\Big\}.
\end{equation}
\end{lemma}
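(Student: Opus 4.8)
The plan is to deduce \eqref{gamma_n} from the topological \emph{intersection property}
\begin{equation}\label{Xint}
g\bigl([0,1]\times(S(c)\cap V_n)\bigr)\cap B_n\neq\emptyset\qquad\text{for every }g\in\Gamma_n ,
\end{equation}
where $T(u):=\|\nabla u\|_2^2+\|\nabla u\|_q^q$, so that $B_n=\{u\in S(c)\cap V_{n-1}^\perp:\ T(u)=\rho_n\}$. Granting \eqref{Xint}, for each $g\in\Gamma_n$ I choose $(t_g,u_g)$ with $g(t_g,u_g)\in B_n$; then $\max_{t,u}I(g(t,u))\geq I(g(t_g,u_g))\geq\inf_{B_n}I=b_n$, and taking the infimum over $\Gamma_n$ gives $\gamma_n(c)\geq b_n$. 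Since every $g\in\Gamma_n$ agrees at $t=0,1$ with $\bar g_n$, the second line of \eqref{pro-g_n} together with $b_n\geq 1$ (Lemma \ref{lm5.9}) yields $\max_{u\in S(c)\cap V_n}I(g(i,u))<1\leq b_n\leq\gamma_n(c)$ for $i=0,1$, which is exactly \eqref{gamma_n1}. So everything reduces to \eqref{Xint}.

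To set up \eqref{Xint}, fix $g\in\Gamma_n$, write $d_k:=\dim V_k$ (so $d_{n-1}<d_n$, as $\{V_n\}$ is strictly increasing; one may even take $d_n=n$), and let $P_{n-1}\colon X\to V_{n-1}$ be the orthogonal projection. Consider the finite-dimensional map
\[
\Psi\colon[0,1]\times(S(c)\cap V_n)\longrightarrow\R\times V_{n-1},\qquad
\Psi(t,u):=\bigl(T(g(t,u))-\rho_n,\ P_{n-1}(g(t,u))\bigr).
\]
A zero $(t^*,u^*)$ of $\Psi$ satisfies $g(t^*,u^*)\in S(c)$, $g(t^*,u^*)\in V_{n-1}^\perp$ and $T(g(t^*,u^*))=\rho_n$, i.e. $g(t^*,u^*)\in B_n$; hence it suffices to produce a zero of $\Psi$. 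Two structural facts are available: since $g$ is odd in $u$ and $P_{n-1}$ is linear, the $V_{n-1}$-component of $\Psi$ is odd in $u$ (while $T$ is even in $u$); and by the first line of \eqref{pro-g_n}, $T(g(0,\cdot))=T(\bar g_n(0,\cdot))<\rho_n<T(\bar g_n(1,\cdot))=T(g(1,\cdot))$, so the $\R$-component of $\Psi$ is strictly negative on $\{0\}\times(S(c)\cap V_n)$ and strictly positive on $\{1\}\times(S(c)\cap V_n)$.

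Producing the zero of $\Psi$ is a Borsuk--Ulam-type argument, carried out as in the proof of the fountain theorem, cf. \cite{BV}. Consider the compact, $u\mapsto-u$ invariant set $\Sigma:=\{(t,u):T(g(t,u))=\rho_n\}$. By the sign change above and the intermediate value theorem, $\Sigma$ separates $\{0\}\times(S(c)\cap V_n)$ from $\{1\}\times(S(c)\cap V_n)$ inside the cylinder $[0,1]\times(S(c)\cap V_n)$. Identifying this cylinder equivariantly (for the antipodal action) with the round annulus $\{w\in V_n:\sqrt c\leq\|w\|_2\leq 2\sqrt c\}$ via $(t,u)\mapsto(1+t)u$, the image of $\Sigma$ is a closed symmetric subset of $V_n\cong\R^{d_n}$ not containing $0$ that separates $0$ from $\infty$; hence, by the Borsuk separation theorem (an equivalent form of Borsuk--Ulam), $\mathrm{genus}(\Sigma)\geq d_n>d_{n-1}$. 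On $\Sigma$ the map $\Psi$ reduces to the odd continuous map $\Sigma\to V_{n-1}$, $(t,u)\mapsto P_{n-1}(g(t,u))$; since a genus strictly exceeding $\dim V_{n-1}$ forbids an odd map $\Sigma\to S^{d_{n-1}-1}$, this reduced map must vanish at some $(t^*,u^*)\in\Sigma$. Then $\Psi(t^*,u^*)=0$, which gives \eqref{Xint} and hence the lemma.

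The only step with real content is the last one, i.e. the genus estimate $\mathrm{genus}(\Sigma)\geq\dim V_n$. The delicate point there is that the reflection $t\mapsto 1-t$ in the first variable is \emph{not} compatible with the $\Z_2$-action $u\mapsto-u$, so one cannot simply apply Borsuk--Ulam to a single sphere obtained by suspending $S(c)\cap V_n$; the sign information at the two ends of the cylinder must instead be recast as a topological separation property and fed into Borsuk's theorem as above. The remaining ingredients — the reduction, the properties of $\Psi$, and the deduction of \eqref{gamma_n1} — are routine.
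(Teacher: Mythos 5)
Your proof is correct and follows the same strategy as the paper: both reduce the lemma to the intersection property $g([0,1]\times(S(c)\cap V_n))\cap B_n\neq\emptyset$ and then deduce \eqref{gamma_n} and \eqref{gamma_n1} from \eqref{pro-g_n} and $b_n\geq 1$ exactly as you do. The only difference is that the paper omits the intersection step entirely, referring to \cite[Lemma 5.9]{BY}, whereas you supply it in full via the standard Borsuk-separation/genus argument (a closed symmetric set in $V_n\setminus\{0\}$ separating $0$ from $\infty$ has genus at least $\dim V_n$, which forbids a nonvanishing odd map to $V_{n-1}$); that argument is sound, with the single cosmetic caveat that ``orthogonal projection'' onto $V_{n-1}$ must be interpreted consistently with the paper's (already loose) use of $V_{n-1}^{\perp}$ in the non-Hilbert space $X$.
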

\begin{proof}
To prove \eqref{gamma_n}, the key is to verify that for each $g \in \Gamma_n$ there exists a pair $(t,u)\in [0,1]\times (S(c)\cap V_n)$, such that $g(t,u)\in B_n$. Based on \eqref{pro-g_n}, this can be proved exactly the same as in \cite[Lemma 5.9]{BY}, with only some modifications in the notation. Here we omit the details. In addition, due to \eqref{pro-g_n}, then \eqref{gamma_n1} is trivial.
\end{proof}

In what follows, we prove that for each $n\in \N$, $\gamma_n(c)$ is a critical value of $I$ on $S(c)$. To this aim, we first prove that there exists a bounded Palais-Smale sequence at the level $\gamma_n(c)$. From now on, let us fix an arbitrary $n\in \N$.
\begin{lemma}\label{lm-PSC}
Assume that $p_q^*<p<\frac{q(N-b)}{(N-q)^+}$. For any given $c>0$, there exists a sequence $\{u_k\}\subset S(c)$ satisfying
\begin{eqnarray}\label{PSC}
\left\{
\begin{array}{l}
I(u_k)\to \gamma_n(c), \\
I'|_{S(c)}(u_k)\to 0, \quad \mbox{ as } k\to \infty,\\
P(u_k)\to 0,
\end{array}
\right.
\end{eqnarray}
where $P(u)$ is given in \eqref{Q}. In particular, $\{u_k\}$ is bounded in $X$.
\end{lemma}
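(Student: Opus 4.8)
The plan is to obtain $\{u_k\}$ as the image, under the scaling map $\kappa$, of a Palais--Smale sequence for an auxiliary functional on the product manifold $S(c)\times\R$; the extra condition $P(u_k)\to 0$ then comes for free from the $\theta$-direction of that functional, since $\partial_\theta\big(I\circ\kappa\big)=P\circ\kappa$. Fix $n\in\N$ and define
$$\widetilde I:S(c)\times\R\to\R,\qquad \widetilde I(u,\theta):=I\big(\kappa(u,\theta)\big),$$
which is $C^1$ because $I$ is $C^1$ on $S(c)$. Writing $t=e^\theta$ in \eqref{scaling1} and using \eqref{Qt} and \eqref{5.7}, one has $\partial_\theta\widetilde I(u,\theta)=P\big(\kappa(u,\theta)\big)$, and \eqref{behaviscaling} shows $\widetilde I(u,\theta)\to 0$ as $\theta\to-\infty$ and $\widetilde I(u,\theta)\to-\infty$ as $\theta\to+\infty$, uniformly for $u$ in the compact set $S(c)\cap V_n$. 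Set
$$\widetilde\Gamma_n:=\Big\{\,\widetilde g=(g,\theta):[0,1]\times(S(c)\cap V_n)\to S(c)\times\R \ \Big|\ \widetilde g\ \text{continuous},\ g(t,-u)=-g(t,u),\ \widetilde g(i,u)=(\bar g_n(i,u),0)\ \text{for }i=0,1\,\Big\}$$
and $\widetilde\gamma_n(c):=\inf_{\widetilde g\in\widetilde\Gamma_n}\ \max_{(t,u)}\widetilde I\big(\widetilde g(t,u)\big)$. For $\widetilde g=(g,\theta)\in\widetilde\Gamma_n$, the map $h(t,u):=\kappa\big(g(t,u),\theta(t,u)\big)$ lies in $\Gamma_n$ and $\max I\circ h=\max\widetilde I\circ\widetilde g$; conversely $(g,0)\in\widetilde\Gamma_n$ for each $g\in\Gamma_n$, with $\widetilde I\circ(g,0)=I\circ g$. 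Hence $\widetilde\gamma_n(c)=\gamma_n(c)$.

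Next, by \eqref{pro-g_n} and \eqref{gamma_n1} the common boundary values of all $\widetilde g\in\widetilde\Gamma_n$ satisfy $\widetilde I(\widetilde g(i,u))<1\le b_n\le\gamma_n(c)=\widetilde\gamma_n(c)$ for $i=0,1$, so $\widetilde\gamma_n(c)$ is a genuine mountain-pass--type minimax level of $\widetilde I$ on $S(c)\times\R$. Applying Ekeland's variational principle in the complete metric space of competitors in $\widetilde\Gamma_n$ — i.e. the deformation argument of Jeanjean \cite{J}, or the scaling-adapted deformation lemma of Cingolani and Tanaka \cite{CT} — produces $(v_k,s_k)\in S(c)\times\R$ with $\widetilde I(v_k,s_k)\to\widetilde\gamma_n(c)$ and $\|\widetilde I'|_{S(c)\times\R}(v_k,s_k)\|\to 0$, the near-maximum points staying away from $t=0,1$ precisely because the boundary values lie strictly below the level; one also keeps $\{s_k\}$ bounded (taking competitors close, in the path metric, to a near-optimal path and using \eqref{behaviscaling}).

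Finally, put $u_k:=\kappa(v_k,s_k)\in S(c)$. Then $I(u_k)=\widetilde I(v_k,s_k)\to\gamma_n(c)$; since $|P(u_k)|=|\partial_\theta\widetilde I(v_k,s_k)|\le\|\widetilde I'|_{S(c)\times\R}(v_k,s_k)\|\to 0$, we get $P(u_k)\to 0$; and because, for $\phi\in T_{v_k}S(c)$, $\partial_u\widetilde I(v_k,s_k)[\phi]=I'(u_k)[\kappa(\phi,s_k)]$ with $\kappa(\cdot,s_k)$ a linear isomorphism $T_{v_k}S(c)\to T_{u_k}S(c)$ whose $X$-norm and that of its inverse are bounded uniformly in $k$ (here boundedness of $\{s_k\}$ is used), it follows that $\|I'|_{S(c)}(u_k)\|\to 0$; this establishes \eqref{PSC}. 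For the boundedness, subtract $\tfrac{2}{N(p-2)+2b}P(u_k)$ from $I(u_k)$ exactly as in the derivation of \eqref{5.5} (cf. the identity $(c)$ in the proof of Proposition \ref{prop5.1}), to get
$$I(u_k)-\frac{2}{N(p-2)+2b}P(u_k)=\frac{Np-2(N+2)+2b}{2[N(p-2)+2b]}\|\nabla u_k\|_2^{2}+\frac{Np-q(N+2)+2b}{q[N(p-2)+2b]}\|\nabla u_k\|_q^{q};$$
since $p>p_q^*$ both coefficients on the right are strictly positive while the left-hand side equals $\gamma_n(c)+o_k(1)$, the sequences $\{\|\nabla u_k\|_2\}$ and $\{\|\nabla u_k\|_q\}$ are bounded, and together with $\|u_k\|_2^2=c$ this gives that $\{u_k\}$ is bounded in $X$.

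The main obstacle is the middle step: carrying out the minimax on the product manifold $S(c)\times\R$ while keeping the competitors odd in $u$ (needed for the subsequent fountain argument), forcing the near-maximum points to remain interior — which is exactly where \eqref{pro-g_n} and \eqref{gamma_n1} enter — and controlling the $\theta$-component $s_k$ of the extracted sequence so that the pull-back of the constrained derivative behaves well. The identification $\widetilde\gamma_n(c)=\gamma_n(c)$ and the coercivity estimate giving boundedness in $X$ are then routine.
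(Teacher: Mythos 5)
Your argument is correct and is essentially the paper's own proof: the paper simply invokes the deformation theorem of Cingolani--Tanaka \cite[Theorem 4.1]{CT} with $E=S(c)\cap V_n$ to produce the Palais--Smale sequence with the extra condition $P(u_k)\to 0$, and your augmented functional $\widetilde I(u,\theta)=I(\kappa(u,\theta))$ on $S(c)\times\R$ together with $\partial_\theta\widetilde I=P\circ\kappa$ is precisely the mechanism behind that theorem, while your coercivity identity $I(u_k)-\frac{2}{N(p-2)+2b}P(u_k)$ is exactly how the paper gets boundedness from \eqref{5.5}. The one detail to repair is that your class $\widetilde\Gamma_n$ must also require the $\theta$-component to be even in $u$ (equivalently, equivariance for the action $(u,\theta)\mapsto(-u,\theta)$): otherwise $h(t,u)=\kappa\bigl(g(t,u),\theta(t,u)\bigr)$ need not be odd in $u$, so $h\in\Gamma_n$ can fail and the inequality $\gamma_n(c)\leq\widetilde\gamma_n(c)$ in your identification of the two minimax levels is not justified.
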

\begin{proof}
 Indeed, due to Lemma \ref{lm5.9} and Lemma \ref{lm5.10}, by letting $E=S(c)\cap V_n$, and 
 $$\gamma(0)=\max_{u\in S(c)\cap V_n} I(g(0,u)),\ \gamma(1)=\max_{u\in S(c)\cap V_n} I(g(1,u)),$$ 
then this lemma is a consequence of \cite[Theorem 4.1]{CT}. In addition, since for all $k>0$, $P(u_k)=0$, then from \eqref{5.5}, we see that $\{u_k\}$ is bounded in $X$.  
\end{proof}

\begin{proposition}\label{prop-compact}
Assume that $p_q^*<p<\frac{q(N-b)}{(N-q)^+}$. Let $\{u_k\}\subset S(c)$ be the Palais-Smale sequence obtained in Lemma \ref{lm-PSC}. Then there exist $\lambda_n\in \R$ and $u_n\in X$, such that, up to a subsequence, as $k\to \infty$,
\begin{enumerate}
  \item [(1)] $u_k \rightharpoonup u_n \neq 0$, in $X$;
  \item [(2)] $ -\triangle u_k -\triangle_q u_k -\lambda_n u_k - \frac{|u_k|^{p-2}u_k}{|x|^b} \to 0$,  in $X^*$;
  \item [(3)] $-\triangle u_n -\triangle_q u_n -\lambda_n u_n - \frac{|u_n|^{p-2}u_n}{|x|^b} = 0$,  in $X^*$.
\end{enumerate}
Moreover, if $\lambda_n <0$, then we have 
$$u_k \to u_n,\ \mbox{ in } X.$$ 
In particular, $||u_n||_2^2 = c$, $I(u_n)=\gamma_n(c)$ and $I'(u_n)-\lambda_n u_n = 0$ in $X^*$.
\end{proposition}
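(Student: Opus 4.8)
The plan follows the classical scheme for constrained Palais--Smale sequences; the two points requiring care are the compactness of the inhomogeneous term supplied by Lemma~\ref{lm_ap1}, which excludes vanishing once $P(u_k)\to 0$, and the negative sign of the multiplier, which is exactly what upgrades weak to strong convergence.

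\emph{Step 1 (weak limit and (1)).} First, by Lemma~\ref{lm-PSC} the sequence $\{u_k\}$ is bounded in $X$, so along a subsequence $u_k\rightharpoonup u_n$ in $X$ and $u_k\to u_n$ in $L^r_{loc}(\R^N)$; moreover, Lemma~\ref{lm_ap1} gives $\int|x|^{-b}|u_k|^p\,dx\to\int|x|^{-b}|u_n|^p\,dx$ and $|x|^{-b}|u_k|^{p-2}u_k\to|x|^{-b}|u_n|^{p-2}u_n$ strongly in $X^*$. If $u_n=0$, then $\int|x|^{-b}|u_k|^p\,dx\to0$, so $P(u_k)\to0$ forces $\|\nabla u_k\|_2^2+\|\nabla u_k\|_q^q\to0$ and hence $I(u_k)\to0$, contradicting $I(u_k)\to\gamma_n(c)\ge b_n\ge 1$ (Lemma~\ref{lm5.9}). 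Thus $u_n\neq0$, which is (1).

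\emph{Step 2 (the multiplier, (2) and (3)).} Since $I'|_{S(c)}(u_k)\to 0$, projecting $I'(u_k)$ onto $T_{u_k}S(c)=\{v\in X:(u_k,v)_{2}=0\}$ and setting
\[
\lambda_k:=\frac1c\Big(\|\nabla u_k\|_2^2+\|\nabla u_k\|_q^q-\int\frac{|u_k|^p}{|x|^b}\,dx\Big),
\]
one obtains $I'(u_k)-\lambda_k u_k\to0$ in $X^*$; boundedness of $\{u_k\}$ in $X$ yields $\lambda_k\to\lambda_n\in\R$ up to a subsequence, and since $(\lambda_k-\lambda_n)u_k\to0$ in $X^*$ this is exactly (2). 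For (3) I would pass to the limit in (2) tested against $\varphi\in C_c^\infty(\R^N)$: the Laplacian and mass terms converge because $u_k\rightharpoonup u_n$ in $X$ (hence weakly in $L^2$), the nonlinear term converges by Lemma~\ref{lm_ap1}, and the only non-automatic limit, $\int|\nabla u_k|^{q-2}\nabla u_k\cdot\nabla\varphi\,dx\to\int|\nabla u_n|^{q-2}\nabla u_n\cdot\nabla\varphi\,dx$, holds once one knows $\nabla u_k\to\nabla u_n$ a.e. The latter is obtained by the classical localized argument: testing (2) with $\psi_R(u_k-u_n)$ ($\psi_R$ a smooth cut-off), using $u_k\to u_n$ in $L^2_{loc}$ and the strong monotonicity inequality $(|\xi|^{q-2}\xi-|\eta|^{q-2}\eta)\cdot(\xi-\eta)\ge c_q|\xi-\eta|^q$ for $q\ge2$, one deduces $\nabla u_k\to\nabla u_n$ in $L^q(B_R)$ for every $R>0$, hence a.e.\ on $\R^N$ along a subsequence. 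This passage to the limit in the $q$-Laplacian term — equivalently, the a.e.\ convergence of the gradients — is the main technical obstacle of the proposition; it may alternatively be invoked from a technical lemma of the appendix.

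\emph{Step 3 (the case $\lambda_n<0$).} For the last part, assume $\lambda_n<0$. Testing (2) with $u_k-u_n$ and using $(u_n,u_k-u_n)_2\to0$, $\langle-\triangle u_n-\triangle_q u_n,\,u_k-u_n\rangle\to0$, and $\langle|x|^{-b}|u_k|^{p-2}u_k,\,u_k-u_n\rangle\to0$ (Lemma~\ref{lm_ap1}), together with the monotonicity inequality above, one arrives at
\[
\|\nabla(u_k-u_n)\|_2^2+c_q\|\nabla(u_k-u_n)\|_q^q\le\lambda_k\|u_k-u_n\|_2^2+o_k(1).
\]
Since the left-hand side is nonnegative and $\lambda_k\to\lambda_n<0$, the right-hand side forces, for $k$ large, first $\|u_k-u_n\|_2\to0$ and then $\|\nabla(u_k-u_n)\|_2\to0$ and $\|\nabla(u_k-u_n)\|_q\to0$; therefore $u_k\to u_n$ in $X$. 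Consequently $\|u_n\|_2^2=c$, $I(u_n)=\lim_k I(u_k)=\gamma_n(c)$, and letting $k\to\infty$ in (2) gives $I'(u_n)-\lambda_n u_n=0$ in $X^*$, which finishes the proof. Everything outside the a.e.\ gradient convergence in Step~2 is a routine adaptation of the compactness argument already used in Proposition~\ref{prop2.1}.
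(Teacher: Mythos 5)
Your proposal is correct and follows the same overall scheme as the paper's proof (boundedness from Lemma \ref{lm-PSC}, non-vanishing via $P(u_k)\to 0$ together with $\gamma_n(c)\ge b_n\ge 1$, identification of $\lambda_k$ via the Berestycki--Lions projection, and strong convergence when $\lambda_n<0$), but it differs in two places worth noting. First, in Step~2 you explicitly address the identification of the weak limit of the $q$-Laplacian term through a.e.\ convergence of gradients obtained by the localized strong-monotonicity argument; the paper simply asserts that $(2)$ implies $(3)$ ``immediately'' from $u_k\rightharpoonup u_n$, which glosses over precisely this point since $-\triangle_q$ is nonlinear. Your treatment is therefore more complete here; note only that there is no lemma in the appendix you could invoke for this, so the cut-off argument you sketch is actually needed. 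Second, for the strong convergence when $\lambda_n<0$ you test $(2)$ and $(3)$ against $u_k-u_n$ and use the inequality $(|\xi|^{q-2}\xi-|\eta|^{q-2}\eta)\cdot(\xi-\eta)\ge c_q|\xi-\eta|^q$, obtaining directly that $\|\nabla(u_k-u_n)\|_2$, $\|\nabla(u_k-u_n)\|_q$ and $-\lambda_k\|u_k-u_n\|_2^2$ all tend to zero; the paper instead tests $(2)$ against $u_k$ and $(3)$ against $u_n$, uses $\int |x|^{-b}|u_k|^p\to\int |x|^{-b}|u_n|^p$ to deduce that $\|\nabla u_k\|_2^2+\|\nabla u_k\|_q^q-\lambda_n\|u_k\|_2^2$ converges to the corresponding quantity for $u_n$, and then concludes from weak lower semicontinuity of each (positively weighted) term plus uniform convexity. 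Both mechanisms are standard and valid; yours has the advantage of being self-contained and of not relying on the Radon--Riesz property, at the cost of invoking the monotonicity inequality one more time.
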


\begin{proof}
By Lemma \ref{lm-PSC}, $\{u_k\}\subset S(c)$ is bounded in $X$, then up to a subsequence, there exists $u_n\in X$, such that $u_k \underset{k}{\rightharpoonup} u_n,\ \mbox{ in } X$, and by Lemma \ref{lm_ap1}, 
\begin{align}\label{5.32}
\int \frac{|u_k|^p}{|x|^b}dx\to \int \frac{|u_n|^p}{|x|^b}dx.
\end{align}

We claim that $u_n \neq 0$. Indeed if $u_n =0$, then $\int \frac{|u_k|^p}{|x|^b}dx\to 0$. Using $P(u_k)\to 0$, we have $\|\nabla u_k\|_2^2\to 0$ and $\|\nabla u_k\|_q^q \to 0$. Thus $I(u_k)\to 0$ and this contradicts with the fact that $\lim_{k\to \infty}I(u_k)=\gamma_n(c) \geq b_n \geq1$. Thus the claim is verified.\medskip

Following \cite[Lemma 3]{BELI}, (see also \cite[Proposition 4.1]{BJL}), we have
\begin{eqnarray*}
I'|_{S(c)}(u_k) &\longrightarrow & 0 \ \mbox{ in } X \\
&\Longleftrightarrow & I'(u_k)-\langle I'(u_k),u_k \rangle u_k \to 0 \mbox{ in } X.\\
\end{eqnarray*}
Thus, for any $w\in X$,
\begin{align}\label{5.33}
\begin{split}
\langle I'(u_k)-\langle I'(u_k),u_k \rangle u_k, w  \rangle &=\int  \nabla u_k  \nabla w dx + \int  |\nabla u_k|^{q-2}\nabla u_k \nabla w dx\\
&-\int \frac{|u_k|^{p-2}u_kw}{|x|^b}dx -\lambda_k \int u_k(x)w(x)dx \to 0,
\end{split}
\end{align}
with
\begin{equation}\label{lambda}
\lambda_k=\frac{1}{\|u_k\|_2^2}\Big\{\|\nabla u_k\|_2^2+\|\nabla u_k\|_q^q- \int \frac{|u_k|^p}{|x|^b}dx\Big\}.
\end{equation}

Since $\{u_k\}$ is bounded in $X$, then clearly, $\{\lambda_k\}$ is bounded in $\R$. Thus up to a subsequence, $\lambda_k\to \lambda_n$ for some $\lambda_n\in \R$. Therefore, the point $(2)$ follows from \eqref{5.33}. Moreover, since $u_k\rightharpoonup u_n$ in $X$, then the point $(2)$ implies immediately the point $(3)$.  

Since \eqref{5.32}, then from $(2)$ and $(3)$, we have 
$$\|\nabla u_k\|_2^2+\|\nabla u_k\|_q^q-\lambda_n \|u_k\|_2^2  \underset{k}{\to} \|\nabla u_n\|_2^2+\|\nabla u_n\|_q^q-\lambda_n \|u_n\|_2^2 .$$
If $\lambda_n<0$, then from $u_k \underset{k}{\rightharpoonup} u_n$ in $X$, we have
$$\|\nabla u_k\|_2^2 \to \|\nabla u_n\|_2^2,\  \|\nabla u_k\|_q^q\to \|\nabla u_n\|_q^q,\ \|u_k\|_2^2\to \|u_n\|_2^2=c.$$
Thus $u_k\underset{k}{\to}u_n$ in $X$, and in particular, $||u_n||_2^2 =c$, $I(u_n)=\gamma_n(c)$ and $I'(u_n)-\lambda_n u_n = 0$ in $X^*$.\\
\end{proof}


\section{Proofs of the main results}\label{sec-proofs}
\begin{proof}[Proof of Theorem \ref{I-th-GN}]
The proof is exactly the same as the one of Lemma \ref{th-main} in Section \ref{sec-GN}. 
\end{proof}

\begin{proof}[Proof of Theorem \ref{I-th-subcritical}]
Theorem \ref{I-th-subcritical} is a consequence of Lemma \ref{lm3.1}, Proposition \ref{prop2.1}, Corollary \ref{col3.1}, and Lemma \ref{lm3.2}. 
\end{proof}

\begin{proof}[Proof of Theorem \ref{I-th-critical}]
Theorem \ref{I-th-critical} has been proved in the proof of Lemma \ref{lm4.1}.
\end{proof}

\begin{proof}[Proof of Theorem \ref{I-th-supercritical}]
Theorem \ref{I-th-supercritical} is a consequence of Proposition \ref{prop5.1}, Lemma \ref{lm5.55} and Lemma \ref{lm5.6}. 
\end{proof}

\begin{proof}[Proof of Theorem \ref{I-th-multiplicity}]
Theorem \ref{I-th-multiplicity} is a consequence of Lemma \ref{lm5.9}, Lemma \ref{lm5.10}, Lemma \ref{lm-PSC}, Lemma \ref{lm5.5} and Proposition \ref{prop-compact}. 
\end{proof}

\section{appendix}\label{sec-apt}
In this section, we give the proof of some technical lemmas, which are used in our main proofs. 
\begin{lemma}\label{lm_ap1}
Assume that $2<p<\frac{q(N-b)}{(N-q)^+}$, with $0 < b < \min\{2,N\}$, $N\geq1$, $q\geq 2 $, and that $\{u_n\}\subset X$ and $u\in X$, being such that
$$u_n\rightharpoonup u,\ \mbox{ in } X^*,$$
then
\begin{eqnarray}\label{ap1.1}
\lim_{n\to \infty}\int \frac{|u_n|^p}{|x|^b}dx= \int \frac{|u|^p}{|x|^b}dx.
\end{eqnarray}
Moreover, we have the following Brezis-Lieb decomposition:
\begin{align}\label{ap1.2}
\int \frac{|u_n|^p}{|x|^b}dx-\int \frac{|u_n-u|^p}{|x|^b}dx-\int \frac{|u|^p}{|x|^b}dx=o_n(1),\quad \mbox{ as } n\to \infty,
\end{align}
\begin{align}\label{ap1.3}
\|\nabla u_n\|_q^q-\|\nabla (u_n-u)\|_q^q-\|\nabla u\|_q^q=o_n(1),\quad \mbox{ as } n\to \infty.
\end{align}
\end{lemma}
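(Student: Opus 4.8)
The plan is to prove the three assertions in order, exploiting that weak convergence in $X = H^1(\R^N)\cap D^{1,q}(\R^N)$ gives, up to a subsequence, local strong convergence in Lebesgue spaces and pointwise a.e.\ convergence, together with a local compactness argument near and away from the singularity $x=0$ to handle the weight $|x|^{-b}$. First I would establish \eqref{ap1.1}. Split the integral as $\int_{B_1} + \int_{B_1^c}$. On the bounded region $B_1$, the weight $|x|^{-b}$ is integrable provided $b<N$, and by a Rellich–Kondrachov–type embedding (using $b<\min\{2,N\}$ and $2<p<\frac{q(N-b)}{(N-q)^+}$, so $p$ is subcritical for the weighted embedding) one gets $u_n\to u$ strongly in $L^p(B_1; |x|^{-b}dx)$, hence convergence of the integral over $B_1$. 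On the exterior region $B_1^c$ the weight is bounded by $1$, so it reduces to the standard fact that a bounded sequence in $H^1\cap D^{1,q}$ converging weakly, converges strongly in $L^p_{loc}$ and, by the decay of the $L^p$ norm on annuli (a concentration-compactness / vanishing-on-exterior argument, or the compactness of the embedding $X\hookrightarrow L^p(B_R^c)$ for this range of $p$), converges strongly in $L^p(B_1^c)$; one may also cite the relevant weighted compactness result (in the spirit of Genoud \cite{G} or the references therein). Combining the two regions yields \eqref{ap1.1}.

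Next I would prove the Brezis–Lieb decomposition \eqref{ap1.2}. Since $\{u_n\}$ is bounded in $X$ and $u_n\to u$ a.e., the classical Brezis–Lieb lemma applies with the measure $d\mu = |x|^{-b}\,dx$ restricted to any fixed ball, giving
\[
\int_{B_R} \frac{|u_n|^p}{|x|^b}dx - \int_{B_R}\frac{|u_n-u|^p}{|x|^b}dx - \int_{B_R}\frac{|u|^p}{|x|^b}dx = o_n(1)
\]
for each fixed $R$. The contributions from $B_R^c$ are controlled uniformly in $n$ by $\|u_n\|_{L^p(B_R^c)}^p + \|u\|_{L^p(B_R^c)}^p$, which can be made arbitrarily small by choosing $R$ large (using boundedness of $\{u_n\}$ in $X$ and the tightness that follows from the subcritical exponent range, exactly as in the proof of \eqref{ap1.1}); note the weight is $\le 1$ there. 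A standard diagonal/$\varepsilon$ argument then upgrades the local statement to the global one \eqref{ap1.2}. Finally, \eqref{ap1.3} is the Brezis–Lieb lemma for the $q$-gradient: since $\nabla u_n \rightharpoonup \nabla u$ in $L^q(\R^N)^N$ and (after extracting a further subsequence using the local compactness of $D^{1,q}$) $\nabla u_n \to \nabla u$ a.e., one applies the Brezis–Lieb lemma directly to the sequence $\{\nabla u_n\}$ in $L^q$; a.e.\ convergence of the gradients on bounded sets can be obtained, if needed, from the strong convergence $u_n\to u$ in $W^{1,q}_{loc}$ via the standard monotonicity/$S_+$-property argument for the $q$-Laplacian, which guarantees $\nabla u_n\to\nabla u$ in $L^q_{loc}$.

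The main obstacle is the first step, namely obtaining \emph{global} (not merely local) strong convergence in $L^p(\R^N;|x|^{-b}dx)$ for the full subcritical range $2<p<\frac{q(N-b)}{(N-q)^+}$: near $x=0$ one must control the singular weight, and near infinity one must rule out mass escaping to infinity. The former is handled by the condition $b<\min\{2,N\}$ together with a Hardy–Sobolev / weighted Gagliardo–Nirenberg estimate which makes the weighted embedding into $L^p(B_1)$ compact; the latter relies on the fact that the exponent $p$ is strictly below the relevant critical threshold, so the embedding $X\hookrightarrow L^p(B_R^c)$ is compact and no vanishing at infinity occurs for a weakly convergent bounded sequence. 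Once these two compactness facts are in place, the rest of the proof is routine application of the Brezis–Lieb lemma and the monotonicity properties of $-\Delta_q$.
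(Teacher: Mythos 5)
There is a genuine gap in your treatment of the region near infinity, which is precisely where the weight $|x|^{-b}$ does the work. The embedding $X\hookrightarrow L^p(B_R^c)$ is \emph{not} compact, and a bounded sequence $u_n\rightharpoonup u$ in $X$ need \emph{not} converge strongly in $L^p(B_1^c)$, nor is the family $\{|u_n|^p\,dx\}$ tight: take a fixed bump $\varphi$ and set $u_n(x)=\varphi(x-ne_1)$; then $u_n\rightharpoonup 0$ in $X$ while $\|u_n\|_{L^p(B_R^c)}=\|\varphi\|_{L^p}$ for all large $n$. So both your claim that the exterior part ``reduces to the unweighted problem because the weight is $\le 1$'' and your later appeal to ``tightness following from the subcritical exponent range'' fail. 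The correct (and elementary) repair is the one the paper uses: since $\{u_n\}$ is bounded in $X$, it is bounded in $L^p(\R^N)$ by Gagliardo--Nirenberg, and
\begin{equation*}
\int_{|x|\ge R}\frac{|u_n-u|^p}{|x|^b}\,dx\ \le\ \frac{1}{R^b}\,\|u_n-u\|_{L^p}^p\ \le\ \frac{C}{R^b},
\end{equation*}
uniformly in $n$; the decay of the weight, not any exterior compactness, kills the contribution at infinity. Combined with your (correct) H\"older--plus--Rellich argument on $B_1$ (the paper does the same, with exponents $\frac{N}{b+\varepsilon}$ and $\frac{Np}{N-b-\varepsilon}$), this yields the stronger statement $\int|x|^{-b}|u_n-u|^p\,dx\to 0$, from which both \eqref{ap1.1} and \eqref{ap1.2} follow at once via the elementary inequality $\bigl||a+b|^p-|b|^p\bigr|\le\varepsilon|b|^p+C_\varepsilon|a|^p$ --- no localized Brezis--Lieb plus diagonal argument is needed.

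A secondary concern is your derivation of \eqref{ap1.3}: the $S_+$/monotonicity argument for $-\Delta_q$ that you invoke to get $\nabla u_n\to\nabla u$ a.e.\ requires information of the form $\limsup_n\langle-\Delta_q u_n,\,u_n-u\rangle\le 0$, which typically comes from $u_n$ solving (approximately) an equation; it is not available for an arbitrary weakly convergent sequence in $X$, and mere weak convergence in $D^{1,q}$ does not give a.e.\ convergence of gradients (for $q\neq 2$ the identity genuinely needs it). The paper sidesteps this by citing \cite[Lemma 3.1]{BY}; if you want a self-contained proof you must either add a.e.\ convergence of the gradients as a hypothesis or verify it for the specific sequences to which the lemma is applied.
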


\begin{proof}
First, we claim that
\begin{eqnarray}\label{ap1.4}
\int |x|^{-b}|u_n-u|^pdx=o_n(1).
\end{eqnarray}
Indeed, since $u_n\rightharpoonup u\ \mbox{ in } X$, then $\{u_n\}$ is bounded in $X$, and $u_n-u\to 0$ in $L^r_{loc}(\R^N)$, for all $r\in [2, \frac{q(N-b)}{(N-q)^+})$, thus for given $R>0$ to be determined, we have
$$\int |x|^{-b}|u_n-u|^{p}dx=\int_{|x|\leq R} |x|^{-b}|u_n-u|^{p}dx+\int_{|x|\geq R} |x|^{-b}|u_n-u|^{p}dx.$$
Using the H\"older inequality, we have for $\varepsilon>0$ small enough, that
\begin{align*}
\int_{|x|\leq R} |x|^{-b}|u_n-u|^{p}dx&\leq \Big(\int_{|x|\leq R} |x|^{-\frac{bN}{b+\varepsilon}}dx\Big)^{\frac{b+\varepsilon}{N}} \Big(\int_{|x|\leq R} |u_n-u|^{\frac{Np}{N-b-\varepsilon}}dx \Big)^{\frac{N-b-\varepsilon}{N}},\\
\int_{|x|\geq R} |x|^{-b}|u_n-u|^{p}dx &\leq \frac{1}{R^b}\int |u_n-u|^{p}dx.
\end{align*}
For given $p<\frac{q(N-b)}{(N-q)^+}$, taking $\varepsilon>0$ small enough, such that $\frac{bN}{b+\varepsilon}<N$ and $\frac{Np}{N-b-\varepsilon}<\frac{q(N-b)}{(N-q)^+}$, then we get \eqref{ap1.4} from the above estimates.

Now we prove \eqref{ap1.2}. Note that when $p>0$, for any $\varepsilon>0$, there exists a constant $C_{\varepsilon}>0$, such that for all $a,b\in \R$,
\begin{eqnarray}\label{ap1.6}
\Big||a+b|^{p}-|b|^{p}\Big|\leq \varepsilon |b|^{p}+C_\varepsilon |a|^{p}.
\end{eqnarray}
(See also \cite[Theorem 1.9]{LL}). Taking $a=u_n-u, b=u$ in \eqref{ap1.6}, then
$$\Big||u_n|^{p}-|u|^{p}\Big|\leq \varepsilon |u|^{p}+C_\varepsilon |u_n-u|^{p},$$
thus, using \eqref{ap1.3}, we have 
\begin{align}\label{ap1.7}
\Big|\int |x|^{-b}|u_n|^{p}dx-\int |x|^{-b}|u|^{p}dx \Big|&\leq \int |x|^{-b}\Big||u_n|^{p}-|u|^{p}\Big|dx\nonumber\\
&\leq \varepsilon \int |x|^{-b}|u|^{p}dx+ C_\varepsilon \int |x|^{-b}|u_n-u|^{p}dx \nonumber\\
&=\varepsilon \int |x|^{-b}|u|^{p}dx+o_n(1).
\end{align}
Since $\varepsilon>0$ is arbitrary, then from \eqref{ap1.7}, we obtain \eqref{ap1.2}. Finally, combing \eqref{ap1.2} and \eqref{ap1.4}, then \eqref{ap1.1} follows.

\eqref{ap1.3} has already been proved in the proof of \cite[Lemma 3.1]{BY}, here we will not repeat the same argument.
\end{proof}

\begin{lemma}(\cite[Theorem 3.4]{LL})\label{lm_ap2}
Let $u(x)$ be a nonnegative function on $\R^N$, vanishing at infinity, and let $u^*$ be the Schwartz symmetrization of $u$. Then
\begin{eqnarray}\label{ap1.8}
\int |x|^{-b}|u|^{p}dx\leq \int |x|^{-b}|u^*|^{p}dx, \ b>0,
\end{eqnarray}
with the understanding that when the left side is infinite so is the right side. Moreover, the equality in \eqref{ap1.8} holds if and only if $u=u^*$.
\end{lemma}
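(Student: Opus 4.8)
The statement is precisely \cite[Theorem 3.4]{LL}, so strictly speaking it may be quoted; for completeness I sketch the standard argument. The plan is to deduce \eqref{ap1.8} from the Hardy--Littlewood rearrangement inequality
\[
\int_{\R^N} f(x)g(x)\,dx \le \int_{\R^N} f^*(x)\,g^*(x)\,dx , \qquad f,g\ge 0 \text{ and vanishing at infinity},
\]
applied with $f=u^p$ and $g(x)=|x|^{-b}$. Indeed, since $t\mapsto t^p$ is nondecreasing and vanishes at $0$ and $u\ge 0$, rearrangement commutes with this composition, so $(u^p)^*=(u^*)^p$; and since $x\mapsto|x|^{-b}$ is radial with strictly decreasing profile, its super-level sets $\{|x|^{-b}>t\}=\{|x|<t^{-1/b}\}$ are centered balls, hence $g=g^*$ a.e. Substituting yields exactly $\int|x|^{-b}u^p\,dx\le\int|x|^{-b}(u^*)^p\,dx$, with the convention that both integrals are read in $[0,+\infty]$, so that a divergent left-hand side forces a divergent right-hand side.

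For the Hardy--Littlewood inequality itself I would use the layer-cake representation $f=\int_0^\infty\chi_{\{f>s\}}\,ds$, $g=\int_0^\infty\chi_{\{g>t\}}\,dt$, together with Tonelli's theorem, to write
\[
\int fg\,dx=\int_0^\infty\!\int_0^\infty \bigl|\{f>s\}\cap\{g>t\}\bigr|\,ds\,dt \le \int_0^\infty\!\int_0^\infty \min\bigl(|\{f>s\}|,|\{g>t\}|\bigr)\,ds\,dt,
\]
the last step being the trivial bound $|A\cap B|\le\min(|A|,|B|)$. For $f^*$ and $g^*$ the super-level sets are concentric balls, hence one contains the other, so the corresponding inequality is an \emph{equality}; since rearrangement preserves measures of super-level sets, the right-hand side above equals $\int f^*g^*\,dx$, and the inequality follows. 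To keep all quantities finite throughout I would first replace $g$ by $g_{M,R}:=\min(g,M)\,\chi_{B_R}$, run the argument for $g_{M,R}$, and then pass to the limit $M,R\to\infty$ by monotone convergence (here the hypothesis $b<\min\{2,N\}$ ensures $|x|^{-b}\in L^1_{\mathrm{loc}}$, so the truncations are genuinely integrable).

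For the equality statement, assume equality holds in \eqref{ap1.8} with both sides finite. Tracing back the layer-cake estimate applied to $f=u^p$, $g=|x|^{-b}$, equality forces $|\{u^p>s\}\cap\{|x|<t^{-1/b}\}|=\min\bigl(|\{u^p>s\}|,|B_{t^{-1/b}}|\bigr)$ for a.e.\ $(s,t)$; choosing, for each relevant $s>0$, a radius $t^{-1/b}$ large enough that $|B_{t^{-1/b}}|\ge|\{u^p>s\}|$, this says that $\{u^p>s\}$ is contained, modulo a null set, in the centered ball of the same measure, hence coincides with it, i.e.\ $\{u^p>s\}=\{(u^*)^p>s\}$ for a.e.\ $s>0$. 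Since a measurable function is determined a.e.\ by its super-level sets, $u=u^*$ a.e. The main — indeed essentially the only — delicate point is this equality analysis together with the bookkeeping needed when the integrals are infinite; it is here that the \emph{strict} monotonicity of $r\mapsto r^{-b}$ is used, as it makes the centered ball the \emph{unique} maximizer of $\int_A|x|^{-b}\,dx$ among sets of given measure and thus rules out any nontrivial rearrangement.
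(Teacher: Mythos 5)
The paper does not prove this lemma at all: it is quoted verbatim from \cite[Theorem 3.4]{LL}, so there is no internal argument to compare against. Your sketch is the standard Hardy--Littlewood/layer-cake proof (essentially the one in Lieb--Loss) and it is correct: the reduction via $(u^p)^*=(u^*)^p$ and $g=g^*$ for $g(x)=|x|^{-b}$ is right, the bound $|A\cap B|\le\min(|A|,|B|)$ with equality for nested centered balls gives the inequality, and your equality analysis correctly isolates where strict radial decrease of $|x|^{-b}$ is used. The only step worth one more line is the passage from ``$\{u^p>s\}$ is contained (mod null sets) in $B_r$ for a.e.\ $r>r(s)$'' to ``$\{u^p>s\}$ equals the centered ball of its own measure'': one should take the intersection over a sequence $r_k\downarrow r(s)$ of admissible radii to get containment in $\overline{B_{r(s)}}$, and then use $|\{u^p>s\}|=|B_{r(s)}|$ to upgrade containment to equality up to a null set; as written you jump directly to the ball of equal measure. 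This is a presentational gap, not a mathematical one.
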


In what follows, inspired by \cite[Theorem 8.1.1]{C}, we prove the exponential decay of the solutions of \eqref{eq1.1}, which is interesting by itself, but also may be helpful to verify \eqref{5.14}.
\begin{lemma}\label{th-e-decay}
Assume that $2<p<\frac{q(N-b)}{(N-q)^+}$, with $0 < b < \min\{2,N\}$, $N\geq1$, $q\geq 2 $, and $u$ satisfies Eq. \eqref{eq1.1} with $\lambda <-2$, then the following properties hold:
\begin{itemize}
  \item [(1)] $ u \in W^{2,r} (\mathbb{R}^{N}) $, for every $r\geq 2$. In particular, $ u \in C^{2,\delta} (\mathbb{R}^{N}) $, for all $ \delta \in (0,1) $ and as $ |x| \rightarrow +\infty $,
$$ | D^\beta u(x)| \rightarrow 0 , \quad \forall\ |\beta| \leq 2.$$
  \item [(2)] There holds that
$$ e^{\frac{1}{2} |x|} ( |u(x)| + |\nabla u(x)| ) \in L^\infty (\mathbb{R}^{N}).$$
\end{itemize}
\end{lemma}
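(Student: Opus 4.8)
The plan is to follow the scheme of \cite[Theorem 8.1.1]{C}: first bootstrap the regularity of $u$, then prove the exponential decay by a comparison argument against an exponential barrier. The only genuinely new feature compared with the pure Laplacian case is the degenerate term $-\triangle_q u$; I would handle it by keeping it in divergence form throughout, so that it enters only through the coefficient $1+|\nabla u|^{q-2}$, which becomes a bounded, uniformly elliptic coefficient as soon as $\nabla u$ is locally bounded.

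For Step~1 (regularity), I would start from $u\in X=H^1(\R^N)\cap D^{1,q}(\R^N)$ and run a Moser/De~Giorgi iteration on \eqref{eq1.1}: the subcriticality $2<p<\frac{q(N-b)}{(N-q)^+}$ together with $0<b<\min\{2,N\}$ (so that $|x|^{-b}$ is locally integrable to a high enough power to be absorbed by H\"older's inequality) yields $u\in L^{\infty}(\R^N)$, hence $u\in L^{r}(\R^N)$ for all $r\in[2,\infty]$ and the right-hand side $\lambda u+|x|^{-b}|u|^{p-2}u$ is locally $L^r$. The $C^{1,\alpha}_{\mathrm{loc}}$ regularity theory for $(2,q)$-Laplacian equations then gives $u\in C^{1,\alpha}_{\mathrm{loc}}$; rewriting \eqref{eq1.1} as $-\mathrm{div}\big((1+|\nabla u|^{q-2})\nabla u\big)=\lambda u+|x|^{-b}|u|^{p-2}u$, the coefficient $1+|\nabla u|^{q-2}$ is locally bounded, uniformly elliptic and locally H\"older continuous, so standard divergence-form elliptic regularity (Calder\'on--Zygmund and Schauder) upgrades $u$ to $W^{2,r}$ and $C^{2,\delta}$. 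The globalization and the decay $D^{\beta}u(x)\to0$ for $|\beta|\le2$ I would obtain from $u\in H^1\cap D^{1,q}$: the local Sobolev norms on unit balls $B(x,1)$ are summable, hence vanish as $|x|\to\infty$, while the elliptic estimates on these balls are uniform in $x$ (for $|x|\ge1$ the weight is bounded, $|x|^{-b}\le1$); one further bootstrap of the linearized equation (whose right-hand side has vanishing local $W^{1,r}$ norm at infinity) gives $D^2u(x)\to0$ as well.

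For Steps~2--3 (exponential decay), I would fix $R>0$ large so that $|\nabla u(x)|^{q-2}\le\tfrac12$ and $|x|^{-b}|u(x)|^{p-2}\le1$ for $|x|\ge R$. On $\Omega_R:=\{|x|>R\}$ the equation reads $-\mathrm{div}(a(x)\nabla u)=c(x)u$ with $a(x)=1+|\nabla u(x)|^{q-2}\in[1,\tfrac32]$ and $c(x)=\lambda+|x|^{-b}|u(x)|^{p-2}\le\lambda+1=:-\mu$, where $\mu=-\lambda-1>1$ since $\lambda<-2$. Because $c+\mu\le0$, the function $|u|$ is a weak subsolution on $\Omega_R$ of the uniformly elliptic operator $\mathcal L v:=-\mathrm{div}(a(x)\nabla v)+\mu v$ (direct computation on $\{u>0\}$ and $\{u<0\}$; across $\{u=0\}$ use that $|u|=\max(u,0)+\max(-u,0)$ is a sum of nonnegative subsolutions, or Kato's inequality). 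Comparing $|u|$ with the bounded $\mathcal L$-harmonic function equal to $\sup_{|x|=R}|u|$ on $\{|x|=R\}$ and vanishing at infinity (the comparison principle needs only boundedness and measurability of $a$), a dyadic-annulus iteration (De~Giorgi--Nash--Moser, exploiting the positive zeroth-order term) shows $|u(x)|\le Ce^{-\delta|x|}$ with a rate $\delta$ arbitrarily close to $\sqrt{\mu}=\sqrt{-\lambda-1}>1$ as $R\to\infty$ (the ellipticity ratio tends to $1$); in particular $\delta\ge\tfrac12$. Equivalently one may compare directly with the explicit barrier $Me^{-\frac12(|x|-R)}$, the mild irregularity of $a$ for $2<q<3$ being circumvented by the divergence-form comparison above or by regularizing $a$. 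Finally, for $|x|\ge R+1$, on $B(x,1)$ the function $u$ solves $-\mathrm{div}\big((1+|\nabla u|^{q-2})\nabla u\big)=g$ with $\|g\|_{L^\infty(B(x,1))}\le C\,\|u\|_{L^\infty(B(x,1))}\le Ce^{-\frac12(|x|-1)}$, so the interior gradient estimate for $(2,q)$-Laplacian equations gives $\sup_{B(x,1/2)}|\nabla u|\le C\big(\|u\|_{L^\infty(B(x,1))}+\|g\|_{L^\infty(B(x,1))}\big)\le Ce^{-\frac12|x|}$. Together with the boundedness of $u$ and $\nabla u$ on $B_{R+1}$ this yields $e^{\frac12|x|}(|u(x)|+|\nabla u(x)|)\in L^\infty(\R^N)$.

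The main obstacle I expect is Step~1: the mixed operator $-\triangle-\triangle_q$ is not directly covered by linear theory, so one must first bootstrap to $C^{1,\alpha}_{\mathrm{loc}}$ in order to ``freeze'' the coefficient $1+|\nabla u|^{q-2}$, and the singular weight $|x|^{-b}$ must be handled near the origin --- this is precisely where the hypothesis $b<\min\{2,N\}$ enters (via H\"older), and where the strongest parts of the $W^{2,r}$/$C^{2,\delta}$ conclusions need the most care. By contrast, once the regularity and the decay $u,\nabla u\to0$ at infinity are available, Steps~2--3 are a fairly routine comparison argument, the single delicate point being to keep $-\triangle_q u$ in divergence form so that $a=1+|\nabla u|^{q-2}$ is used merely as a bounded measurable elliptic coefficient; the choices $\lambda<-2$ and decay rate $\tfrac12$ are made exactly to leave slack in these estimates.
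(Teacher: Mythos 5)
Your overall plan is sound, but for part (2) you take a genuinely different route from the paper, and the comparison is worth spelling out. The paper does not use any maximum principle or barrier: it multiplies Eq.~\eqref{eq1.1} by $\theta_\varepsilon u$ with $\theta_\varepsilon(x)=e^{|x|/(1+\varepsilon|x|)}$ (a bounded Lipschitz weight with $|\nabla\theta_\varepsilon|\le\theta_\varepsilon$), integrates by parts, absorbs the cross terms $\int|\nabla u|\,|u|\,\theta_\varepsilon$ and $\int|\nabla u|^{q-1}|u|\,\theta_\varepsilon$ by Young's inequality, and controls $\int\theta_\varepsilon|u|^q$ and $\int\theta_\varepsilon|x|^{-b}|u|^p$ by $\int\theta_\varepsilon|u|^2$ outside a large ball using $|u|\le1$ there. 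The hypothesis $\lambda<-2$ then makes the coefficient $-2(\lambda+2)$ of $\int\theta_\varepsilon|u|^2$ positive, so letting $\varepsilon\to0^+$ gives $\int e^{|x|}(|u|^2+|\nabla u|^2)\,dx<\infty$, and the pointwise bound $e^{|x|/2}(|u|+|\nabla u|)\in L^\infty$ follows from the global Lipschitz continuity of $u$ and $\nabla u$ supplied by part (1). This energy method treats $-\Delta_q u$ with a single integration by parts and never needs the coefficient $a=1+|\nabla u|^{q-2}$ to be anything more than measurable, and it yields the gradient decay simultaneously with the decay of $u$. Your comparison/barrier method, by contrast, gives pointwise decay directly and in principle a rate close to $\sqrt{-\lambda-1}$ rather than $\tfrac12$, but it buys this at the cost of the step you yourself flag: for $2<q<3$ the coefficient $a$ is not differentiable at zeros of $\nabla u$, so $e^{-\delta|x|}$ is not visibly a weak supersolution of $-\mathrm{div}(a\nabla\cdot)+\mu$, and you must instead run a quantitative Landis/De Giorgi iteration on annuli (or a Moser local-boundedness estimate fed by an $L^2$ weighted bound, which circles back to the paper's energy method). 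That step, together with tracking the constant $\theta<1$ well enough to certify the rate $\tfrac12$, and the linear (not sublinear) dependence in the interior gradient estimate for the $(2,q)$-operator in the small-gradient regime, is where the real work of your proof lies; none of it is needed in the paper's argument. For part (1) both you and the paper defer to standard regularity theory; your sketch (Moser iteration to $L^\infty$, $C^{1,\alpha}_{\mathrm{loc}}$ for the $(2,q)$-Laplacian, then frozen-coefficient Calder\'on--Zygmund and Schauder, with $b<\min\{2,N\}$ handling the weight near the origin) is a reasonable expansion of what the paper omits.
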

\begin{proof}[Proof of Lemma \ref{th-e-decay}]
We mainly use the idea from \cite{C} to prove the theorem. The point $(1)$ follows from the standard elliptic regularity theory, here we omit the details. Now we prove the point $(2)$.

Let $\varepsilon > 0$ and $ \theta_\varepsilon (x) = e ^{\frac{|x|}{1+ \varepsilon |x|}}$, then clearly $\theta_\varepsilon  $ is  bounded, Lipschitz Continuous in $\mathbb{R}^{N}$, and $ |\nabla  \theta_\varepsilon | \leq  \theta_\varepsilon$ a.e. in $\mathbb{R}^{N}$. Multiplying Eq. \eqref{eq1.1} with $ \theta_\varepsilon u$, we have
\begin{equation}\label{dec1}
 \int -\triangle u \cdot ( \theta_\varepsilon u)dx - \int \triangle_q u \cdot ( \theta_\varepsilon u)dx = \int \lambda  \theta_\varepsilon |u|^2dx + \int \theta_\varepsilon \frac{|u|^p}{|x|^b}dx.
 \end{equation}
Note that, by the integrating by parts,
\begin{equation*}
\begin{split}
 \int -\triangle u \cdot ( \theta_\varepsilon u) dx&= \int \nabla u \cdot \nabla( \theta_\varepsilon u)dx\\
                                         &= \int \nabla u \cdot \nabla  \theta_\varepsilon \cdot udx + \int  \theta_\varepsilon |\nabla u |^2dx\\
                                         &\geq -\int |\nabla u||u| \theta_\varepsilon dx + \int  \theta_\varepsilon |\nabla u |^2dx,
\end{split}
\end{equation*}

\begin{equation*}
\begin{split}
 \int -\triangle_q u \cdot ( \theta_\varepsilon u)dx &= \int -div(|\nabla u|^{q-2} \nabla u)\cdot( \theta_\varepsilon u)dx\\
                          &= \int |\nabla u|^{q-2} \nabla u \cdot \nabla ( \theta_\varepsilon u)dx \\
                          &= \int |\nabla u|^{q-2} \nabla u \cdot \nabla  \theta_\varepsilon \cdot udx + \int \theta_\varepsilon |\nabla u|^{q}dx\\
                          &\geq -\int |\nabla u|^{q-1} |u|\theta_\varepsilon dx + \int  \theta_\varepsilon |\nabla u |^qdx.
\end{split}
\end{equation*}
By the Young inequality, we have
\begin{equation*}
 \int |\nabla u| |u|  \theta_\varepsilon dx \leq \frac{1}{2} \int  \theta_\varepsilon |\nabla u|^2dx + \frac{1}{2} \int  \theta_\varepsilon |u|^2dx,
\end{equation*}
\begin{equation*}
  \int |\nabla u|^{q-1} |u|  \theta_\varepsilon dx \leq \frac{q-1}{q} \int  \theta_\varepsilon |\nabla u|^q dx+ \frac{1}{q} \int  \theta_\varepsilon |u|^qdx.
\end{equation*}
Then,
\begin{equation*}
 \int -\triangle u \cdot ( \theta_\varepsilon u)dx \geq \frac{1}{2} \int  \theta_\varepsilon |\nabla u|^2 dx- \frac{1}{2} \int  \theta_\varepsilon |u|^2dx,
\end{equation*}
\begin{equation*}
  \int -\triangle_q u \cdot ( \theta_\varepsilon u)dx \geq \frac{1}{q} \int  \theta_\varepsilon |\nabla u|^q dx- \frac{1}{q} \int  \theta_\varepsilon |u|^qdx.
\end{equation*}
Now, from \eqref{dec1}, we have
\begin{equation*}
  \int  \theta_\varepsilon \frac{|u|^p}{|x|^b}dx +  \lambda \int  \theta_\varepsilon |u|^2dx
  \geq \frac{1}{2} \int  \theta_\varepsilon |\nabla u|^2 dx+ \frac{1}{q} \int  \theta_\varepsilon |\nabla u|^q dx- \frac{1}{2} \int  \theta_\varepsilon |u|^2 dx - \frac{1}{q} \int  \theta_\varepsilon |u|^q dx,
\end{equation*}
which leads to
\begin{equation}\label{dec2}
  \int  \theta_\varepsilon \frac{|u|^p}{|x|^b} dx+ \frac{1}{q} \int  \theta_\varepsilon |u|^q dx
  \geq \frac{1}{2} \int  \theta_\varepsilon |\nabla u|^2 dx+ \frac{1}{q} \int  \theta_\varepsilon |\nabla u|^q dx- (\lambda + \frac{1}{2}) \int  \theta_\varepsilon |u|^2dx.
\end{equation}
By the point $(1)$, we know that $u(x) \rightarrow 0$, as $|x| \rightarrow +\infty $. Thus, there exists $R>1$, such that
$$ |u(x)| \leq 1, \ \mbox{as} \ |x| \geq R. $$
Therefore,
\begin{equation}\label{dec3}
\begin{split}
 \int_{\mathbb{R}^N}  \theta_\varepsilon |u|^q dx&= \int_{|x| \leq R}  \theta_\varepsilon |u|^q dx+ \int_{|x| \geq R}  \theta_\varepsilon |u|^qdx\\
                                        &\leq \int_{|x| \leq R}  \theta_\varepsilon |u|^q dx+\int_{|x| \geq R}  \theta_\varepsilon |u|^2dx\\
                                        &\leq \int_{|x| \leq R}  \theta_\varepsilon |u|^q dx +\int_{\mathbb{R}^N}  \theta_\varepsilon |u|^2dx.
\end{split}
\end{equation}
Similarly,
\begin{equation}\label{dec4}
\begin{split}
 \int_{\mathbb{R}^N}  \theta_\varepsilon \frac{|u|^p}{|x|^b}dx
 &\leq \int_{|x| \leq R}  \theta_\varepsilon \frac{|u|^p}{|x|^b}dx  + \frac{1}{R^b}\int_{\mathbb{R}^N}  \theta_\varepsilon |u|^2 dx\\
 &\leq \int_{|x| \leq R}  \theta_\varepsilon \frac{|u|^p}{|x|^b}dx +\int_{\mathbb{R}^N}  \theta_\varepsilon |u|^2dx.
\end{split}
\end{equation}
Thus taking \eqref{dec3} and \eqref{dec4} into \eqref{dec2},
\begin{equation*}
\begin{split}
 &\frac{1}{2} \int  \theta_\varepsilon |\nabla u|^2dx + \frac{1}{q} \int  \theta_\varepsilon |\nabla u|^qdx - (\lambda + \frac{1}{2}) \int  \theta_\varepsilon |u|^2dx \\
  \leq & \frac{1}{q}\int_{|x| \leq R}  \theta_\varepsilon |u|^q dx+ \int_{|x| \leq R}  \theta_\varepsilon \frac{|u|^p}{|x|^b}dx + (\frac{1}{q} +1) \int_{\mathbb{R}^N}  \theta_\varepsilon |u|^2dx\\
  \leq & \frac{1}{q}\int_{|x| \leq R}  \theta_\varepsilon |u|^q dx+ \int_{|x| \leq R}  \theta_\varepsilon \frac{|u|^p}{|x|^b}dx + \frac{3}{2} \int_{\mathbb{R}^N}  \theta_\varepsilon |u|^2dx.
  \end{split}
\end{equation*}
Then
\begin{equation}\label{dec5}
  \int  \theta_\varepsilon |\nabla u|^2 dx+  \int  \theta_\varepsilon |\nabla u|^qdx - 2(\lambda + 2) \int  \theta_\varepsilon |u|^2dx
  \leq  \int_{|x| \leq R}  \theta_\varepsilon |u|^q dx+ 2 \int_{|x| \leq R}  \theta_\varepsilon \frac{|u|^p}{|x|^b}dx.
\end{equation}
Since $  \theta_\varepsilon = e ^{\frac{|x|}{1+ \varepsilon |x|}} \leq e^{|x|} $, $u \in  C^{2,\delta} (\mathbb{R}^{N})$, $\lambda < -2$ and $0<b<\min\{2,N\}$, then
$$\int_{|x| \leq R}  \theta_\varepsilon |u|^q dx+ 2 \int_{|x| \leq R}  \theta_\varepsilon \frac{|u|^p}{|x|^b}dx < +\infty.$$
Thus, \eqref{dec5} implies that
$$ \max \{\int  \theta_\varepsilon |\nabla u|^2dx, \int  \theta_\varepsilon |\nabla u|^qdx, \int  \theta_\varepsilon |u|^2dx \} \leq \widetilde{C}(u).$$
where $\widetilde{C}(u) > 0$.
Letting $ \varepsilon \rightarrow 0^+$, then we have
\begin{equation}\label{dec6}
\int_{\mathbb{R}^{N}} e^{|x|} |u(x)|^2dx \leq \widetilde{C}(u), \quad \int_{\mathbb{R}^{N}} e^{|x|} |\nabla u(x)|^2dx \leq \widetilde{C}(u).
\end{equation}
and by the point $(1)$, $u$ and $|\nabla u|$ are all globally Lipschitz continuous in $\R^N$, hence we conclude from \eqref{dec6} that
\begin{equation*}
e^{\frac{1}{2}|x|} \Big(|u(x)|+|\nabla u(x)|\Big)\in L^{\infty}(\R^N).
\end{equation*}
which is the point $(2)$.
\end{proof} 

{\bf Acknowledgments}

Tingjian Luo is supported by the National Natural Science Foundation of China (11501137), and the Guangdong Basic and Applied Basic Research Foundation (2025A1515012282).


\end{document}